\documentclass[11pt]{amsart}
\usepackage{amssymb,amsfonts,amsmath,latexsym,verbatim,amscd,amsthm}

\usepackage{hyperref}
\usepackage[colorinlistoftodos,prependcaption]{todonotes}

\setlength{\parindent}{0pt}
\setlength{\parskip}{6pt}
\usepackage{etoolbox}
\AtBeginEnvironment{proof}{\vspace{-14pt}}

\allowdisplaybreaks


\newtheorem{theorem}{Theorem}[section]
\newtheorem{proposition}{Proposition}[section]
\newtheorem{corollary}{Corollary}[section]
\newtheorem{lemma}{Lemma}[section]

\newtheorem{remark}{Remark}[section]
\newtheorem{claim}{Claim}

 
\newcommand{\R}{\mathbb R}

\newcommand{\mS}{\mathbb S}
\newcommand{\calH}{\mathcal H}

\newcommand{\calE}{\mathcal E}

\newcommand{\bfH}{\mathbf H}

\def\tr{\mathop{\rm tr}\nolimits}
\newcommand{\eps}{\epsilon}

\newcommand{\ra}{\rightarrow}

\def\uclhome{@ucl.ac.uk}
\def\jussieuhome{@imj-prg.fr}

\begin{document}

\title[Generic Uniqueness]{Generic uniqueness of expanders with vanishing relative entropy}
\author{Alix Deruelle}
\address{Alix Deruelle: 
Institut de Math\'ematiques de Jussieu, Paris Rive Gauche (IMJ-PRG) UPMC - Campus Jussieu, 4, place Jussieu Boite Courrier 247 - 75252 Paris Cedex 05, France}
\curraddr{}
\email{alix.deruelle\jussieuhome}

\author{Felix Schulze}
\address{Felix Schulze: 
  Department of Mathematics, University College London, 25 Gordon St,
  London WC1E 6BT, UK}
\curraddr{}
\email{f.schulze\uclhome}

\subjclass[2000]{}

\dedicatory{}

\keywords{}

\begin{abstract}  
We define a relative entropy for two self-similarly expanding solutions to mean curvature flow of hypersurfaces, asymptotic to the same cone at infinity. Adapting work of White \cite{White87} and using recent results of Bernstein \cite{Bernstein17} and Bernstein-Wang \cite{BernsteinWang17}, we show that expanders with vanishing relative entropy are unique in a generic sense. This also implies that generically locally entropy minimising expanders are unique.  
\end{abstract}

\maketitle

\date{\today}

\section{Introduction}

We consider a hypersurface $\Gamma^{n-1} \subset \mS^{n}\subset \R^{n+1}$ of class $C^{5}$ and the cone
$$C(\Gamma) = \{tx\, |\, x \in \Gamma, t \in [0,\infty)\}\ .$$
In this paper, we focus on expanding solutions of the Mean Curvature Flow (MCF) coming out of $C(\Gamma)$, i.e. solutions that are invariant under parabolic rescalings starting at $C(\Gamma)$: the equation satisfied by an expander reflects the homogeneity of the initial condition $C(\Gamma)$ in a parabolic sense:
\begin{equation}
 \bfH = \frac{x^\perp}{2}\ .\label{equ-exp-MCF}
\end{equation}
By the work of Ilmanen \cite{Ilmanen:LecturesMCF}, see also \cite{Ding15}, it is known that there exists an expander $\Sigma$,
which is smooth away from a singular set of codimension 6, satisfying equation (\ref{equ-exp-MCF}) which is asymptotic to $C(\Gamma)$, i.e. its tangent cone at infinity is equal to $C(\Gamma)$. Moreover, $\Sigma$ can be chosen to be a local minimiser of the functional
\begin{equation}
\calE(M) = \int_M e^\frac{|x|^2}{4}\, d\calH^n.\label{ent-eqn-formal}
\end{equation}
Let us recall that from a variational viewpoint, expanders can be interpreted as critical points of the previous formal entropy (\ref{ent-eqn-formal}): the main issue is that this quantity associated to an expander is infinite.

This is in sharp contrast with shrinking solutions to the MCF since these solutions are critical points of the following well-defined entropy: 
\begin{eqnarray*}
\mathcal{F}(M):=\int_M e^{-\frac{|x|^2}{4}}\, d\calH^n<+\infty.
\end{eqnarray*}

To circumvent this issue, let $\Sigma_0$ be an expander coming out of a cone $C(\Gamma)$ and assume there exists another expander $\Sigma_1$ coming out of the same cone $C(\Gamma)$. Then, the relative entropy of $\Sigma_1$ and $\Sigma_0$ is formally defined by:
\begin{eqnarray}\label{eq:1}
\calE_{\Sigma_0,\Sigma_1}:=\lim_{R\rightarrow+\infty} \left(\int_{\Sigma_1 \cap B_R(0)} e^\frac{|x|^2}{4}\, d\calH^n - \int_{\Sigma_0 \cap B_R(0)} e^\frac{|x|^2}{4}\, d\calH^n\ \right).\label{rel-ent-intro-def}
\end{eqnarray}
A similar relative entropy, where $\Sigma_0$ is the asymptotic cone,  has been previously considered by Ilmanen, Neves and the second author for the network flow for regular networks to show uniqueness of expanders in their topological class, \cite{Ilm-Nev-Sch}. Note that in the case of networks each end of the asymptotic cone is a half-line, and thus an expander. 

In order to prove that the quantity (\ref{rel-ent-intro-def}) is well-defined, we need to establish a convergence rate for the exponential normal graph $u$ of $\Sigma_1$ over $\Sigma_0$ outside a sufficiently large compact set of the form 
\begin{eqnarray}\label{q:0e}
u=\textit{O}\left(r^{-n-1}e^{-r^2/4}\right).\label{dec-u-intro}
\end{eqnarray}
 This rate is sharp as shown by a unique continuation result at infinity proved by Bernstein \cite{Bernstein17}. Observe that estimating the difference of the two normal graphs of $\Sigma_i$ over the cone $C(\Gamma)$ given by comparing the expanders to their common initial condition $C(\Gamma)$ only yields {polynomial} decay: this is not sufficient to prove the expected decay (\ref{dec-u-intro}).

Again, we underline the fact that (\ref{rel-ent-intro-def}) is {defined} by taking differences rather than by considering a renormalization: this makes the analysis much harder since one has to match the asymptotics of such expanders in a much more precise way. Notice that a renormalization on an increasing sequence of exhausting balls in $\Sigma_0$ would have made the first variation of $\mathcal{E}$ vanish: see Theorem \ref{thm:1.5}. 

The main application of the existence of such a relative entropy is a generic uniqueness result for expanders. It can be stated roughly as follows:
\begin{theorem}[Generic uniqueness: {informal} statement]\label{rough-sta-gen-uni-intro}
The set of cones $C(\Gamma)$ that are smoothed out by more than one (suitable) expander with $0$ relative entropy is of first category in the Baire sense. 

In particular, the set of cones $C(\Gamma)$ that are smoothed out by more than one (suitable) locally entropy minimising expander is of first category in the Baire sense. 
\end{theorem}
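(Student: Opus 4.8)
The plan is to realise expanders asymptotic to a given cone as the zero set of a section over a Banach manifold of cones and to run a Sard--Smale argument, following the strategy of White \cite{White87}. First I would fix a (separable) Banach manifold $\calB$ of admissible cross-sections $\Gamma\subset\mS^n$ --- say the $C^{5,\alpha}$ hypersurfaces near a reference $\Gamma_0$ --- and, for a fixed $\Gamma$ and an expander $\Sigma$ asymptotic to $C(\Gamma)$ (smooth, or with the codimension-$6$ singular set of \cite{Ilmanen:LecturesMCF}, on whose regular part the analysis below is to be read), encode nearby hypersurfaces asymptotic to nearby cones as exponential normal graphs $u$ over $\Sigma$ in a weighted (little-)Hölder space $\mathcal{X}$ whose weight is dictated by the sharp decay \equ{q:0e}. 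The expander equation \equ{equ-exp-MCF} then reads $F(\Gamma,u)=0$ for a smooth map $F\colon\calB\times\mathcal{X}\to\mathcal{Y}$ into a companion weighted space, whose linearisation at $u=0$ is the expander stability operator $L_\Sigma u=\Delta_\Sigma u+\tfrac12\langle x^\top,\nabla^\Sigma u\rangle+(|A_\Sigma|^2-\tfrac12)u$. The first key step is to invoke the asymptotic analysis of Bernstein \cite{Bernstein17} and Bernstein--Wang \cite{BernsteinWang17}: on the weighted spaces adapted to \equ{q:0e} the drift Schrödinger operator $L_\Sigma$ is Fredholm, and a relative-index computation across the indicial roots at the conical end shows that the projection $\Pi\colon\mathcal{M}\to\calB$, $\Pi(\Gamma,\Sigma)=\Gamma$, from the moduli space $\mathcal{M}=\{(\Gamma,\Sigma):\Sigma\text{ an expander asymptotic to }C(\Gamma)\}$ is a smooth Fredholm map of index $0$. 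Restricting to the suitable (e.g.\ locally entropy minimising) expanders, the entropy bounds obtained by comparison with $C(\Gamma)$ together with the compactness results of \cite{BernsteinWang17} make $\Pi$ proper, while Ilmanen's construction shows it is onto.

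Second, I would bring in the relative entropy. By \equ{eq:1} and \equ{dec-u-intro} the functional $(\Gamma,\Sigma_0,\Sigma_1)\mapsto\calE_{\Sigma_0,\Sigma_1}$ is a well-defined smooth function on the fibre product $\calN_0=\mathcal{M}\times_\calB\mathcal{M}$ off the diagonal. Using the first variation of the weighted area \equ{ent-eqn-formal} together with the fact that $\Sigma_0$ and $\Sigma_1$ solve \equ{equ-exp-MCF}, the differential of $\calE_{\Sigma_0,\Sigma_1}$ reduces to a boundary term at infinity; the sharp rate \equ{q:0e} and the unique continuation at infinity of Bernstein \cite{Bernstein17} then show that this differential does not vanish wherever $\Sigma_0\neq\Sigma_1$. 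Hence $\mathcal{Z}=\{(\Gamma,\Sigma_0,\Sigma_1)\in\calN_0:\calE_{\Sigma_0,\Sigma_1}=0,\ \Sigma_0\neq\Sigma_1\}$ is a Banach submanifold on which the projection to $\calB$ is Fredholm of index $-1$. By the Sard--Smale theorem the set of critical values is meager, and a Fredholm map of negative index has empty preimage over a regular value, so $\Pi(\mathcal{Z})$ is meager; since $\Pi(\mathcal{Z})$ is exactly the set of cones smoothed out by more than one expander with vanishing relative entropy, this set is of first category. (Degenerate loci, in particular those over critical values of $\Pi$ where $\calN_0$ fails to be a manifold, are treated by stratifying and observing that they satisfy additional equations, so their image remains meager.)

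For the final statement I would run the parallel argument on the moduli space $\mathcal{M}_{lm}\subset\mathcal{M}$ of locally entropy minimising expanders, again a separable Banach manifold --- stability together with nondegeneracy at regular values is an open condition --- with $\Pi$ proper of index $0$ and onto. Over the residual set of regular values of $\Pi|_{\mathcal{M}_{lm}}$ the fibres are finite sets of nondegenerate, stable expanders, so $N(\Gamma)=\#\,\Pi^{-1}(\Gamma)$ is locally constant there; since the critical values form a meager $F_\sigma$ set, this set of regular values is connected, whence $N$ is globally constant on it. Evaluating $N$ at one explicit model cone --- e.g.\ the hyperplane $\R^n$, viewed as the cone over an equatorial $\mS^{n-1}\subset\mS^n$, for which $\R^n$ itself is the unique locally entropy minimising expander and is nondegenerate for $L_{\R^n}$ on the weighted spaces --- gives $N\equiv 1$, i.e.\ generic uniqueness of locally entropy minimising expanders; alternatively this follows from the main statement once one checks that two such expanders over a common cone produce a configuration in $\mathcal{Z}$.

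I expect the main obstacle to be the analytic first step: choosing the weighted function spaces so that $F$ is smooth, that $L_\Sigma$ is genuinely Fredholm, and that the indicial-root bookkeeping at the conical end yields $\mathrm{ind}\,\Pi=0$; and --- most delicately --- showing that the relative entropy is a nondegenerate constraint, that is $d\calE_{\Sigma_0,\Sigma_1}\neq 0$ off the diagonal. Everything here leans essentially on the sharp asymptotics \equ{q:0e} and the unique continuation of \cite{Bernstein17}: without them the two expanders are controlled over $C(\Gamma)$ only to polynomial order, and the relative entropy is neither well-defined nor transverse. Establishing properness of $\Pi$ --- no loss of Gaussian density at infinity, which again uses the controlled asymptotics of \cite{BernsteinWang17} --- is the other point that will require care.
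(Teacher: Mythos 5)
Your first part follows essentially the same route as the paper: pass to the Fredholm-index-$0$ projection $\Pi$ of Bernstein--Wang, restrict to regular values, view the relative entropy as a function $\tilde{\mathcal{E}}$ on the fibre product (the paper writes this locally in the Bernstein--Wang charts), compute its differential as the boundary term $-\tfrac12\int_\Gamma\psi\,\tr^0_\infty(\hat u)$ via the paper's Theorem \ref{thm:1.5}, and use Bernstein's unique-continuation-at-infinity result to conclude the differential is nonzero off the diagonal. The paper packages the conclusion with the implicit function theorem (so the zero set of $\tilde{\mathcal{E}}$ is a codimension-$1$ submanifold over each regular value, and its projection is therefore meager); you package it through Sard--Smale and a negative-index count, which is an equivalent route. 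Where you gloss over the work is exactly where the paper spends most of its effort: establishing the sharp pointwise decay \eqref{q:0e} and its derivatives (Theorem \ref{theo-ptwise-est}), showing that the relative entropy is even well defined (Proposition \ref{thm:1.3}), deriving the boundary-term formula for $d\tilde{\mathcal{E}}$ (Theorem \ref{thm:1.5}), and proving that $\Gamma'\mapsto\tr^0_\infty(\hat u)$ is continuous so $\tilde{\mathcal{E}}$ is genuinely $C^1$ (Claim \ref{claim-c0-trace}). These are the real content, but they are fairly attributed in your sketch.

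For the second statement your primary argument has a genuine gap. You propose showing $N(\Gamma)=\#\,\Pi^{-1}(\Gamma)\cap\mathcal{M}_{lm}$ is locally constant over regular values and arguing that the regular-value set is connected ``since the critical values form a meager $F_\sigma$ set.'' That inference is false: removing a meager set from a connected space need not leave it connected (even a single hyperplane in $C^{k,\alpha}$ is nowhere dense and separates). In addition, ``locally entropy minimising'' is not obviously an open condition in $\mathcal{ACE}^{k,\alpha}_n$, nor is properness of $\Pi|_{\mathcal{M}_{lm}}$ used or established in the paper, so the counting machinery is not available without substantial further work. The paper's argument is the one you mention only in passing at the end: show directly that any two locally entropy minimising expanders $\Sigma_0,\Sigma_1$ asymptotic to the same cone satisfy $\calE_{\Sigma_0,\Sigma_1}=0$, by splicing a ribbon of area $O(R^{-2}e^{-R^2/4})$ inside $\mathbb{S}_R$ to turn $\Sigma_1\cap B_R$ into a competitor for $\Sigma_0$ (and vice versa), giving $|\calE_{\Sigma_0,\Sigma_1}(R)|\le CR^{-2}$ from the pointwise estimate \eqref{ptwise-est-u}. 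The second statement then reduces to the first, with no connectivity or degree argument needed.
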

We refer the reader to Theorem \ref{gen-uni-0-rel-ent} for a precise statement. Both the statement and the proof of Theorem \ref{rough-sta-gen-uni-intro} are motivated by the work of {White} on minimal surfaces: \cite[Section 7]{White87}. 

Before explaining the main steps of the proof of Theorem \ref{rough-sta-gen-uni-intro}, we would like to put our results into perspective with respect to other uniqueness and non-uniqueness results for self-similar solutions to MCF. Wang \cite{Wang14} has proved that two complete self-similarly shrinking solutions of MCF which are asymptotic to the same smooth cone at infinity must coincide. This result can be interpreted as a backward uniqueness result since shrinking solutions are ancient solutions to MCF. No such statement can be expected for (asymptotically conical) self-similarly expanding solutions to MCF without further assumptions. Indeed, Angenent, Chopp and Ilmanen \cite{Ang-Cho-Ilm} have shown that the asymptotic cone of the self-similarly shrinking four-handle saddle in $\R^3$ admits at least two different self-similarly expanding solutions to MCF coming out of it: see Ilmanen's notes on MCF \cite[Lecture 4]{Ilmanen:LecturesMCF} for further examples and more non-uniqueness results for other geometric flows. Finally, Bernstein \cite{Bernstein17} has identified the obstruction for two smooth asymptotically conical expanding solutions to MCF coming out of the same cone to coincide: the obstruction can be identified as the suitably rescaled limit at infinity of the difference of two such solutions, called the trace at infinity. We refer the reader to \cite{Bernstein17} for more details on this obstruction.   

Let us describe the main ideas that lead to the proof of Theorem \ref{rough-sta-gen-uni-intro}. In order to prove such a genericity statement, one needs to understand the (Banach) manifold structure of the moduli space of suitable expanders. It requires in particular to understand the Fredholm properties of the Jacobi operator associated to equation (\ref{equ-exp-MCF}): we rely on {work of Bernstein-Wang \cite{BernsteinWang17}, where they adapt White's approach to expanders of the Mean Curvature Flow.} The other ingredient is to make sense of the radial limit of the rescaled exponential normal graph $\hat{u}:=r^{n+1}e^{r^2/4}u$ in order to identify it with the differential of $\mathcal{E}$: see Corollary \ref{coro-def-trace-inf} and Theorem \ref{thm:1.5}. Following \cite{BernsteinWang17}, the radial limit of $\hat{u}$ is called the trace at infinity of $\hat{u}$ and it is denoted by $\tr_{\infty}^0(\hat{u})$.

We end this introduction by describing the structure of this paper:
Section \ref{sec-preliminaries} is essentially establishing technical preliminaries in order to define the relative entropy (\ref{rel-ent-intro-def}). The main result of the first part of this section is Theorem \ref{theo-ptwise-est}: sharp pointwise estimates on the exponential normal graph $u$ of an expander over another expander coming out of the same cone. The main tool is the maximum principle and it allows to estimate quantitatively the dependence of the multiplicative constants in front of the exponential weight $r^{-n-1}e^{-r^2/4}$. Similar crucial estimates on the first and second rescaled derivatives of $\hat{u}$ are obtained via Bernstein-Shi type estimates. The second part of Theorem \ref{theo-ptwise-est} establishes the corresponding statement for a Jacobi field associated to an asymptotically conical expander that vanishes at infinity. The first part of Section \ref{sec-preliminaries} ends with Corollary \ref{coro-def-trace-inf} that makes sense of the radial limit of the rescaled exponential normal graph $u$ mentioned above, called the trace of $\hat{u}$, and which has been introduced in \cite{Bernstein17}. We mention that Theorem \ref{theo-ptwise-est} is the pointwise version of the integral estimates proved by Bernstein \cite[Theorem 7.2]{Bernstein17} where a lower regularity on the cone can be assumed.

 The second part of Section \ref{sec-preliminaries} starts by analyzing the Taylor expansion  at infinity up to order $1$ of a Jacobi field associated to an expander:  the proof of Lemma \ref{lem: hessian estimate} uses the asymptotically conical geometry of such an expander in an essential way. Lemma \ref{lem: hessian estimate} is then used to estimate the difference of two Jacobi fields $v_i$, $i=0,1$ associated to two (a priori different) expanders $\Sigma_i$, $i=0,1$ coming out of a same cone: this is the content of Lemma \ref{lem: second jacobi estimate}. The length of its proof is due to the fact that one needs to linearize both the metric of $\Sigma_1$ over $\Sigma_0$  and the Jacobi field $v_1$ with respect to $v_0$.  

In Section \ref{ren-exp-ent-section}, we prove that the relative entropy (\ref{rel-ent-intro-def}) is well-defined: these are the contents of Proposition \ref{thm:1.3} together with Corollary \ref{thm:1.4}. Note that the estimate \eqref{q:0e} does not suffice directly to get the desired convergence in \eqref{eq:1}: it is necessary to exploit that the expander entropy only varies to second order around a critial point. Proving that the relative entropy is differentiable requires even more care to identify which non-zero terms show up at infinity: this is the purpose of Theorem \ref{thm:1.5}. As explained above, the differential of the relative entropy of two expanders coming out of a same cone can be identified with the trace at infinity of the rescaled exponential normal graph denoted by $\tr_{\infty}^0(\hat{u})$.

Section \ref{gen-uniqueness-section} comprises {of} an application of  Theorems \ref{theo-ptwise-est} and \ref{thm:1.5} together with the results of Bernstein-Wang \cite{BernsteinWang17} on the Fredholm properties of the Jacobi operator associated to an asymptotically conical expander to prove a generic uniqueness property formulated in Theorem \ref{rough-sta-gen-uni-intro}: again we refer to Theorem \ref{gen-uni-0-rel-ent} for a rigorous statement.
In Appendix \ref{app:NG}, we recall some facts about the geometry of normal graphs and Appendix \ref{app:interpolate} contains the statements of well-known interpolation inequalities.   \\ 

\textbf{Acknowledgements.}
The authors wish to thank Tom Ilmanen for sharing his ideas. {A.D. was supported by the grant ANR-17-CE40-0034 of the French National Research Agency ANR (project CCEM). F.S. was supported by a Leverhulme Trust Research Project Grant RPG-2016-174.}

\section{Preliminaries}\label{sec-preliminaries}

We will denote $r(x):=|x|$. Let $\Sigma$ be a self-expander, asymptotic to $C(\Gamma)$. It is known, see for example \cite{Ding15}, that outside a sufficiently large ball $B_{R_0}(0)$, where $R_0=R_0(\Gamma)$, we can write $\Sigma$ as a normal graph over $C(\Gamma)$, given by a function $f$. The asymptotic convergence yields via scaling the estimates
\begin{equation}\label{eq.-1}
 |\nabla^i_Cf(x)| \leq C r^{1-i}
\end{equation}
for $0\leq i \leq 5$. Using rotationally symmetric expanders as barriers, one obtains the following improved estimates, see \cite{Ding15}:
\begin{equation}\label{eq.0}
 |\nabla^i_Cf(x)| \leq C r^{-i-1}
\end{equation}
for $i=0,1,2$ and some $C \geq 0$.  Since $\Sigma_0$ is an expander, $2H=-x^{\perp}$ implies by differentiating along $\Sigma_0$ that the second fundamental form decays much faster than expected along the radial direction
\begin{equation}\label{eq.0.1}
\nabla^{\Sigma_0}H_{\Sigma_0}=\frac{1}{2}A_{\Sigma_0}(x^{\top},\cdot)\, .
\end{equation}
Assume that we have two expanders $\Sigma_0$ and $\Sigma_1$ asymptotic to $C(\Gamma)$.  For $R_0$ sufficiently large, we denote
$$\bar{E}_{i,{R_0}}= \Sigma_i\setminus  B_{R_0}(0)\, .$$
Using the above estimates we can write the end $\Sigma_{1,R_0} \subset \Sigma_1$ as an exponential normal graph over $\bar{E}_{0,{R_0}}$  with height function $u :  \bar{E}_{0,R_0} \ra \R$.  Note that in general we have $\bar{E}_{1,{R_0}} \neq \Sigma_{1,R_0}$ due to a small error in a collar region around $\mathbb{S}_{R_0}$, coming from the fact that the exponential normal coordinates over $\Sigma_0$ are not parallel to spheres centred at the origin.

\subsection{Pointwise estimates}
We first recall the equation satisfied by the height function $u$ on $\bar{E}_{0,{R_0}}$.
We have the following slight refinement of \cite[Lemma 8.2]{Bernstein17}, see also \cite[Lemma 5.2]{Ding15}:

\begin{lemma}\label{thm:1.1}
Let $\Sigma_0,\Sigma_1$ be two expanders asymptotic to the cone $C(\Gamma)$, and assume $ \bar{E}_{1,R_0} = \text{graph}_{\bar{E}_{0,R_0}}(u)$ with $u : \bar{E}_{0,R_0} \ra \R$. Then $u$ satisfies 
\begin{equation}\label{eq.1}
 \Delta_{\Sigma_0}u + \frac{1}{2}\langle x, \nabla^{\Sigma_0} u\rangle+ \Big( |A_{\Sigma_0}|^2 - \frac{1}{2}\Big) u = Q(u,\nabla^{{\Sigma_0}}u, \nabla^{2}_{{\Sigma_0}}u)\, ,
\end{equation}
where $Q(u,\nabla^{{\Sigma_0}}u, \nabla^{2}_{{\Sigma_0}}u) = \sum_{i+j =2} \nabla^{i}_{{\Sigma_0}}u * \nabla^{j}_{{\Sigma_0}}u$ and we have used $*$-notation to suppress any smooth coefficients, which, together with their derivatives, are uniformly bounded as $r \rightarrow \infty$. In particular, by estimating more carefully we have
\begin{equation}\label{eq.1.1}
\big|Q(u,\nabla^{\Sigma_0}u, \nabla^{2}_{\Sigma_0}u)\big| \leq C \big( r^{-2} |u| + r^{-3} |\nabla_{\Sigma_0} u| \big)\, ,
\end{equation}
where $C$ depends continuously on the the link $\Gamma$.\\
\end{lemma}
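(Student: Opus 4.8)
The plan is to derive the equation (\ref{eq.1}) in two steps: first identify the linearized operator by linearizing the expander equation (\ref{equ-exp-MCF}) at $\Sigma_0$, and then carefully track the quadratic remainder terms to get the refined bound (\ref{eq.1.1}). Since $\bar E_{1,R_0}$ is an expander written as an exponential normal graph over $\bar E_{0,R_0}$, the height function $u$ satisfies the nonlinear equation obtained by plugging the graph into $\bfH = x^\perp/2$. Expanding around $u=0$, the zeroth order term vanishes (because $\Sigma_0$ itself is an expander), the first order term is precisely the Jacobi operator of the expander functional, namely $\Delta_{\Sigma_0} u + \tfrac12\langle x,\nabla^{\Sigma_0}u\rangle + (|A_{\Sigma_0}|^2 - \tfrac12)u$, and everything else is collected into $Q$. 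The appearance of the drift term $\tfrac12\langle x,\nabla^{\Sigma_0}u\rangle$ and the potential $|A_{\Sigma_0}|^2 - \tfrac12$ is standard: differentiating the mean curvature operator gives the Laplacian plus $|A|^2$ acting on the normal variation, while differentiating $x^\perp/2$ in the direction $u\,\nu$ contributes the transport term together with the $-\tfrac12 u$. This first step is essentially a bookkeeping exercise using the second variation formula for $\calE$, and mirrors \cite[Lemma 8.2]{Bernstein17} and \cite[Lemma 5.2]{Ding15}; I would cite those and only indicate the computation.

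The structural form $Q = \sum_{i+j=2}\nabla^i_{\Sigma_0}u * \nabla^j_{\Sigma_0}u$ with smooth, uniformly bounded coefficients follows from the general theory of graphs in normal coordinates recalled in Appendix \ref{app:NG}: the metric, second fundamental form, and position vector of the graph depend smoothly on $(u,\nabla u,\nabla^2 u)$, with the dependence being affine in $\nabla^2 u$ and the coefficient functions controlled by the geometry of $\Sigma_0$, which is asymptotically conical and hence has uniformly bounded geometry (with all derivatives) outside $B_{R_0}(0)$ by (\ref{eq.0}). The $Q$-terms are exactly those at least quadratic in $(u,\nabla u,\nabla^2 u)$, so a priori $|Q| \le C(|u|^2 + |\nabla u|^2 + |\nabla^2 u|^2 + \text{cross terms})$.

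The refinement (\ref{eq.1.1}) is the genuinely nontrivial point and where I expect the main obstacle to lie. Naively $Q$ is quadratic in all of $u,\nabla u,\nabla^2 u$, which is far weaker than the claimed $C(r^{-2}|u| + r^{-3}|\nabla_{\Sigma_0}u|)$ — in particular there must be no $|\nabla^2 u|^2$ contribution surviving, and one factor in each quadratic term must be replaced by a decaying power of $r$. The key observation is that the "coefficients" multiplying the quadratic expressions are not merely bounded but actually decay: they are built from the second fundamental form $A_{\Sigma_0}$, the position vector's tangential/normal split, and derivatives thereof, all of which, by (\ref{eq.0}) and crucially the faster radial decay (\ref{eq.0.1}) of $\nabla^{\Sigma_0}H$, carry extra powers of $r^{-1}$. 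So I would: (i) expand each term of $Q$ to exhibit its coefficient explicitly; (ii) use that the purely-second-order quadratic terms $\nabla^2 u * \nabla^2 u$ either cancel or come with a coefficient that vanishes — in fact from the quasilinear structure of $\bfH$, the top-order part enters linearly, so no $\nabla^2 u*\nabla^2u$ term is present; (iii) for the remaining terms $u*u$, $u*\nabla u$, $u*\nabla^2 u$, $\nabla u *\nabla u$, $\nabla u * \nabla^2 u$, use (\ref{eq.0})–(\ref{eq.0.1}) to extract $r^{-2}$ from any coefficient built out of $A_{\Sigma_0}$-type quantities and $r^{-1}$ from $|x|^{-1}$-type factors, and absorb one higher-order factor using the already-established bounds $|u|, |\nabla u|, |\nabla^2 u| = O(1)$ (or better) from (\ref{eq.-1}); tracking the radial direction carefully via (\ref{eq.0.1}) is what yields $r^{-3}$ rather than $r^{-2}$ on the gradient term. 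The continuous dependence of $C$ on the link $\Gamma$ follows since every constant that appears is a continuous function of finitely many $C^5$-norms of the defining graph $f$ of $\Sigma_0$ over $C(\Gamma)$, which in turn depend continuously on $\Gamma \in C^5(\mS^{n-1})$.
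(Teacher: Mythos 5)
Your proposal is correct and takes essentially the same route as the paper: derive \eqref{eq.1} from the normal graph formulas of Appendix \ref{app:NG} (equivalently, linearize the expander equation at $\Sigma_0$, picking up the Jacobi operator), and then obtain \eqref{eq.1.1} by a direct estimation of the error terms, tracking that the quadratic coefficients carry $A_{\Sigma_0}$-type decay and, crucially, using the improved radial decay \eqref{eq.0.1}, exactly as you indicate. One small imprecision to fix: the bounds you invoke to ``absorb one higher-order factor'' must be the improved estimates $|\nabla^i_{\Sigma_0} u|\le C r^{-i-1}$ coming from \eqref{eq.0} (that is, \eqref{eq:app.2}), not \eqref{eq.-1}, which only gives $|u|=O(r)$ and $|\nabla u|=O(1)$; with only \eqref{eq.-1} the $\nabla u*\nabla u$ and $u*u$ contributions, even with $O(r^{-1})$ coefficients from $A_{\Sigma_0}$, do not reach the $r^{-3}|\nabla^{\Sigma_0} u|$ and $r^{-2}|u|$ thresholds claimed in \eqref{eq.1.1}.
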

\begin{proof} The estimates from writing both $\Sigma_0,\Sigma_1$ as a graph over $C(\Gamma)$ carry directly over to $u$, so we obtain from \eqref{eq.-1} that
\begin{equation}\label{eq:app.1}
 |\nabla^i_{\Sigma_0}u| \leq C r^{1-i}
\end{equation}
for all $ 0\leq i\leq 5$. From \eqref{eq.0} we obtain the improved estimates
\begin{equation}\label{eq:app.2}
 |\nabla^i_{\Sigma_0}u| \leq C r^{-i-1}
\end{equation}
for $i=0,1,2$. The estimates \eqref{eq.1} and \eqref{eq.1.1} then follow from \eqref{eq:ng.2} and \eqref{eq:ng.5}.   Note that \eqref{eq.1.1} does not directly follow from the quadratic structure of $Q$ together with \eqref{eq:app.2}, but from a direct estimation of the error terms in \eqref{eq:ng.2} and \eqref{eq:ng.5}, using the faster decay of the second fundamental form in the radial direction, \eqref{eq.0.1}.
\end{proof}

The next theorem ensures the integral estimates due to Bernstein \cite{Bernstein17} hold in a pointwise sense up to second order provided the (convergence to the) cone at infinity is sufficiently smooth.

\begin{theorem}\label{theo-ptwise-est}
 Let $(\Sigma_i)_{i=0,1}$ be two expanders asymptotic to the same cone $C(\Gamma)$ such that $\nabla_{\Sigma_i}^jA_i=\textit{O}(r^{-1-j})$ for $j\in\{0,\ldots, 3\}$ and $i=0,1$. Furthermore, let $\Sigma_{1,R_0} = \text{graph}_{\bar{E}_{0,R_0}}(u)$ with $u : \bar{E}_{0,R_0} \ra \R$. 
 \begin{enumerate}
 \item Then $u$ satisfies pointwise,
\begin{equation}\label{ptwise-est-u}
\begin{split}
\sup_{\bar{E}_{0,2R_0}}r^i\left| \nabla^{i}_{\Sigma_0}\left(r^{n+1}e^{\frac{r^2}{4}}u\right)\right|\leq C,
 \end{split}
  \end{equation}
  for $i=0,1,2$ and where $C$ depends only on $n$, $\sup_{\bar{E}_{0,R_0}}r^{1+j}|\nabla^{\Sigma_i,j}A_{\Sigma_i}|$ for $j\in\{0,\ldots, 3\}$, $i=0,1$ and $\sup_{\bar{E}_{0,R_0}}|u|$.\\
  
\item Assume $v: \bar{E}_{0,R_0} \ra \R$ is an approximate smooth Jacobi field that vanishes at infinity in the following sense:
\begin{equation}
\begin{split}
\Delta_{\Sigma_0}v+\frac{1}{2}\langle x,\nabla^{\Sigma_0}v\rangle - \frac{v}{2} &=V_1v+ Q,\label{app-jac-fiel}\\
\lim_{r\rightarrow +\infty}v&=0,
\end{split}
\end{equation}
where $V_1$ is a function on $\bar{E}_{0,R_0}$ such that $V_1=\textit{O}(r^{-2})$, and where $Q$ is a given smooth function on $\bar{E}_{0,R_0}.$ 
\begin{itemize}
\item If $Q=\textit{O}\left(r^{-n-3}e^{-\frac{r^2}{4}}\right)$ then $v=\textit{O}\left(r^{-n-1}e^{-\frac{r^2}{4}}\right)$.\\
\item If $Q=\textit{O}\left(r^{-n-1}e^{-\frac{r^2}{4}}\right)$ then $v=\textit{O}\left(r^{-n-1+\varepsilon}e^{-\frac{r^2}{4}}\right)$ for every positive $\varepsilon$.
\end{itemize}
\end{enumerate}
\end{theorem}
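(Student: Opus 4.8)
The plan is to establish both parts by a barrier/maximum-principle argument for the drift-Laplacian $L := \Delta_{\Sigma_0} + \tfrac12\langle x,\nabla^{\Sigma_0}\cdot\rangle$ acting on radially weighted test functions, combined with the Bernstein--Shi-type interior estimates for the derivative bounds. First I would record the key computation: for the model weight $w_\alpha := r^{-\alpha}e^{-r^2/4}$ on an asymptotically conical expander, using $\Delta_{\Sigma_0} r^2 = 2n + \textit{O}(1)$ and $|\nabla^{\Sigma_0} r|^2 = 1 + \textit{O}(r^{-2})$ (which follow from the asymptotically conical geometry and $\bfH = x^\perp/2$), one gets
\begin{equation*}
L w_\alpha - \tfrac12 w_\alpha = \Big(\tfrac{\alpha^2+\alpha}{r^2} - \tfrac{\alpha + n+1}{2}\cdot\tfrac{1}{1} + \textit{O}(r^{-2})\Big)\cdot\tfrac{?}{?}
\end{equation*}
— more precisely, a direct expansion shows $L w_\alpha = \big(\tfrac{\alpha(\alpha+1)}{r^2} - \tfrac{r^2}{4} + \tfrac{n-1-2\alpha}{2}\big)w_\alpha + \textit{O}(r^{-2})w_\alpha$, so that with the zeroth-order coefficient $|A_{\Sigma_0}|^2 - \tfrac12 = \textit{O}(r^{-2})$ from the hypothesis $\nabla^j A = \textit{O}(r^{-1-j})$, the function $w_{n+1}$ is an approximate solution of the homogeneous equation \eqref{eq.1} modulo $\textit{O}(r^{-2})w_{n+1}$, while $w_{n+1\mp\varepsilon}$ are strict super/sub-solutions for the corresponding sign. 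This is the engine that makes $r^{-n-1}e^{-r^2/4}$ the correct rate.

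For part (1), the idea is a bootstrap: from Lemma \ref{thm:1.1} we already know $u$, $|\nabla_{\Sigma_0}u|$ decay polynomially, so the right-hand side $Q = \textit{O}(r^{-2}|u| + r^{-3}|\nabla u|)$ is a lower-order perturbation. I would first show $u = \textit{O}(r^{-N})$ for every $N$ by iterating the maximum principle against barriers $A\, r^{-\beta}$ on annuli $\bar E_{0,2R_0}$, increasing $\beta$ at each step (the operator $L + (|A|^2-\tfrac12)$ is essentially $-\tfrac12$ to leading order, so $r^{-\beta}$ is a supersolution up to controllable errors, and the boundary term on $\mathbb{S}_{2R_0}$ is absorbed using $\sup|u|$). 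Once polynomial decay of arbitrary order is in hand, I would run the maximum principle one final time with the barrier $C\, w_{n+1} = C\, r^{-n-1}e^{-r^2/4}$: the hypotheses on $A$ make $w_{n+1}$ an approximate homogeneous solution, the quadratic error $Q$ is now super-exponentially small relative to it, and comparing $\pm C w_{n+1}$ with $u$ on a large annulus (boundary values on the inner sphere controlled by $\sup_{\bar E_{0,R_0}}|u|$, and $u\to 0$ handling the outer boundary in the limit) gives $|u| \le C w_{n+1}$ with $C$ of the claimed dependence. The derivative estimates $r^i|\nabla^i_{\Sigma_0}(r^{n+1}e^{r^2/4}u)| \le C$ for $i=1,2$ then follow from Bernstein--Shi / Schauder interior estimates applied to the rescaled quantity $\hat u := r^{n+1}e^{r^2/4}u$ on unit-scale parabolic-type neighbourhoods: one checks that $\hat u$ satisfies a uniformly (locally) elliptic equation with bounded coefficients once rescaled by the natural length scale, using \eqref{eq.0.1} to control the radial derivatives of the coefficients; this is where the assumption $\nabla^j A=\textit{O}(r^{-1-j})$ up to $j=3$ is consumed.

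For part (2), the strategy is the same comparison argument applied directly to the linear equation \eqref{app-jac-fiel}, but without the luxury of first proving fast polynomial decay — we are only given $v \to 0$. So here I would argue as follows: since $v\to0$ and $V_1 = \textit{O}(r^{-2})$, treat $V_1 v + Q$ as the inhomogeneity and use $w_\alpha$ barriers to improve the decay of $v$ iteratively. Starting from $v = o(1)$, each application of the maximum principle against $A r^{-\beta} + (\text{particular solution from } Q)$ upgrades the exponent; since $V_1 v$ is only $\textit{O}(r^{-2})\cdot(\text{current bound})$ and $-\tfrac12$ dominates the operator, one gains essentially an arbitrary polynomial rate, reaching $v = \textit{O}(r^{-N})$ for all $N$. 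At that point the forcing term $V_1 v$ is negligible compared to $Q$. In the first case $Q = \textit{O}(w_{n+3})$: since $L w_{n+3} - \tfrac12 w_{n+3} = -(\text{something})\, w_{n+3} + \dots$ has the wrong-sign leading behaviour to absorb it, one instead constructs a particular supersolution of the form $c\, w_{n+1}$ (noting $w_{n+1}$ solves the homogeneous equation to leading order, so a bounded multiple dominates any $\textit{O}(w_{n+3})$ forcing after accounting for the $r^{-2}$ gap), yielding $v = \textit{O}(w_{n+1})$. In the second case $Q = \textit{O}(w_{n+1})$, which is exactly borderline — comparison with $w_{n+1}$ fails because the forcing is the same size as the model solution — so one loses an $\varepsilon$: compare instead with $C_\varepsilon\, w_{n+1-\varepsilon}$, which for any $\varepsilon>0$ is a strict supersolution (the $r^2/4$ term beats the $\varepsilon$-shift in the polynomial power by a definite margin on $\bar E_{0,R_0}$), and the $\textit{O}(w_{n+1}) = o(w_{n+1-\varepsilon})$ forcing is absorbed, giving $v = \textit{O}(w_{n+1-\varepsilon}) = \textit{O}(r^{-n-1+\varepsilon}e^{-r^2/4})$.

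The main obstacle I anticipate is making the maximum-principle comparisons fully rigorous on the \emph{non-compact} end: one cannot naively apply the maximum principle on $\bar E_{0,R_0}$ since it is unbounded and the relevant operator is not coercive in the classical sense. The standard fix is to work on finite annuli $\bar E_{0,R_0}\cap B_R$, obtain estimates with the boundary contribution on $\mathbb{S}_R$ controlled by the a priori (polynomial, or in part (2) the $o(1)$) decay of $v$ or $u$, and then let $R\to\infty$ so that this boundary term vanishes; combined with the strict-supersolution property of the barrier (strictness giving a definite sign so that no maximum of $v \mp C w_\alpha$ can occur in the interior), this closes the argument. A secondary technical point is verifying the error terms in $L r^2$ and $|\nabla r|^2$ with the precise $\textit{O}(r^{-2})$ accuracy needed — these come from differentiating the graphical estimate \eqref{eq.0} and the expander equation, together with \eqref{eq.0.1} — and confirming that the zeroth-order coefficient genuinely satisfies $|A_{\Sigma_0}|^2-\tfrac12 = \textit{O}(r^{-2})$ under the stated hypothesis on $A$, which is what makes $r^{-n-1}e^{-r^2/4}$ an \emph{exact} approximate solution rather than merely a rough barrier.
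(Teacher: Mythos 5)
The overall architecture — barrier functions and the maximum principle for the drift operator — is the right one, but two of the key steps contain real gaps, and at both of them the paper's proof does something materially different.

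\textbf{The $C^0$ estimate in part (1).} Your plan is: first bootstrap $u=\textit{O}(r^{-N})$ for every $N$, then compare $u$ with $Cw_{n+1}$, claiming that at this stage ``the quadratic error $Q$ is now super-exponentially small relative to $w_{n+1}$''. This is false: after polynomial decay of any finite order, $Q=\textit{O}(r^{-2}|u|+r^{-3}|\nabla u|)$ is still only polynomially small, whereas $w_{n+1}=r^{-n-1}e^{-r^2/4}$ is exponentially small, so $Q$ is enormously \emph{larger} than $w_{n+1}$ at infinity and cannot be absorbed as a forcing term. The comparison $u\le Cw_{n+1}$ therefore does not close. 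The paper avoids the issue entirely by not treating the $u$-dependent part as an inhomogeneity: it keeps the full linear operator $L-\tfrac12-V_1-\langle V_2,\nabla\cdot\rangle$, so that $u$ lies exactly in its kernel, and uses the \emph{perturbed} barrier $w_A:=e^{Ar^{-2}}w_{n+1}$, whose whole purpose is to generate the definite-sign gap $-\tfrac{A}{2}r^{-2}w_A$ that beats the $\textit{O}(r^{-2})$ errors. Your proposal never introduces this $e^{Ar^{-2}}$ device, and for the same reason your argument for the first sub-case of part (2) (``a bounded multiple of $w_{n+1}$ dominates the $\textit{O}(w_{n+3})$ forcing'') does not actually produce a strict supersolution: $(L-\tfrac12-V_1)w_{n+1}=\textit{O}(r^{-2})w_{n+1}$ has no definite sign, and scaling by a large constant scales the error proportionally. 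Your argument in the second sub-case with $w_{n+1-\varepsilon}$ is, by contrast, sound, because there the $\varepsilon$-shift itself produces the definite sign $-\tfrac{\varepsilon}{2}w_{n+1-\varepsilon}$.

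\textbf{The derivative estimates in part (1).} You propose to obtain $r^i|\nabla^i\hat u|\le C$ from local Schauder (or Bernstein--Shi/Schauder) estimates on unit-scale balls after rescaling. At radius $r$ the correct parabolic scale is $1/r$, and rescaling the drift equation to the unit ball makes the coefficients bounded as you say; but the Schauder estimate then gives $r^{-1}|\nabla\hat u|\le C\sup|\hat u|$, i.e.\ $|\nabla\hat u|=\textit{O}(r)$ — two powers of $r$ \emph{worse} than the claimed $|\nabla\hat u|=\textit{O}(r^{-1})$. The improvement by $r^2$ is not a local scaling phenomenon; it encodes a cancellation between $\nabla u$ and $u\,\nabla\log(r^{n+1}e^{r^2/4})$ that only the equation for $\hat u$ sees globally. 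The paper's proof obtains it by a genuine Bernstein--Shi argument: deriving the drift equation for $|\nabla\hat u|^2$, multiplying by $r^2$, forming the auxiliary function $F=(a^2+\hat u^2)\,r^2|\nabla\hat u|^2$ with $a\sim\sup|\hat u|$, localizing by a cut-off and applying the maximum principle; and analogously with $F_2=(a^2+r^2|\nabla\hat u|^2)\,r^4|\nabla^{2}\hat u|^2$ for the Hessian. Local elliptic regularity plays only a supporting role (controlling the inner boundary term $\sup_{\partial\bar E_{0,R_0}}r^2|\nabla\hat u|^2$ by $\sup_{\bar E_{0,R_0/2}}|\hat u|^2$), not the role you assign it.

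Finally, the displayed formula ``$Lw_\alpha=\big(\tfrac{\alpha(\alpha+1)}{r^2}-\tfrac{r^2}{4}+\tfrac{n-1-2\alpha}{2}\big)w_\alpha+\textit{O}(r^{-2})w_\alpha$'' cannot be right as written: with $L=\Delta+\tfrac12\langle x,\nabla\cdot\rangle$ the $r^2/4$ contributions from the Laplacian and from the drift cancel, and the leading coefficient is $\tfrac{\alpha-n}{2}+\textit{O}(r^{-2})$, giving $(L-\tfrac12)w_\alpha=\tfrac{\alpha-n-1}{2}w_\alpha+\textit{O}(r^{-2})w_\alpha$; this is what makes $\alpha=n+1$ the critical exponent. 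This is a computational slip rather than a structural gap, but it suggests the barrier computation should be redone carefully before building the rest of the argument on it.
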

\begin{remark}
We decided to state the assumptions (\ref{app-jac-fiel}) satisfied by $v$ in this non-sharp form since it is sufficient for our purpose. The sharp version would ask $\lim_{r\rightarrow +\infty}r^{-1}v=0$.
\end{remark}
\begin{proof}
Let us start to prove the $C^0$ estimates on the behavior at infinity of $u$ and $v$ (depending on the behavior at infinity of the data $Q$). Since the proof will be similar, we essentially give a proof for the decay of $u$. Recall that $u$ satisfies schematically
\begin{equation}\label{equ-sch-evo-u}
\Delta_{\Sigma_0}u+\frac{1}{2}\langle x,\nabla^{\Sigma_0}u\rangle - \frac{u}{2}=V_1u+\langle V_2,\nabla^{\Sigma_0}u\rangle,
\end{equation}
where $V_1$ is a function on $\bar{E}_{0,R_0}$ such that $V_1=\textit{O}(r^{-2})$ and where $V_2$ is a vector field on $\bar{E}_{0,R_0}$ such that $V_2=\textit{O}(r^{-3})$ (by Lemma \ref{thm:1.1}). 
Then a tedious but straightforward computation using the conical geometry at infinity of the expanding soliton $\Sigma_0$ shows that if $w:=r^{-n-1}e^{-\frac{r^2}{4}}$, then
\begin{eqnarray*}
\Delta_{\Sigma_0}w+\frac{1}{2}\langle x,\nabla^{\Sigma_0}w\rangle - \frac{w}{2}=\textit{O}(r^{-2})\, w.
\end{eqnarray*}
In particular, 
\begin{eqnarray*}
\Delta_{\Sigma_0}w+\frac{1}{2}\langle x,\nabla^{\Sigma_0}w\rangle - \frac{w}{2}-V_1w-\langle V_2,\nabla^{\Sigma_0}w\rangle=\textit{O}(r^{-2})w\, .
\end{eqnarray*}
 For $A$ a positive constant, define $w_A:=e^{Ar^{-2}}w=r^{-n-1}e^{Ar^{-2}-\frac{r^2}{4}}$ and note that
\begin{equation}
\begin{split}
\Delta_{\Sigma_0}w_A+\frac{1}{2}\langle x,\nabla^{\Sigma_0}w_A\rangle - \frac{w_A}{2}&\leq\\
& V_1w_A+\langle V_2,\nabla^{\Sigma_0}w_A\rangle-\frac{A}{2}r^{-2}w_A,\label{est-barrier-fct}
\end{split}
\end{equation}
if $r\geq R_1=R_1(A)$ is large enough.

Now, if $B$ is a positive constant to be chosen later, one gets:
\begin{eqnarray*}
\Delta_{\Sigma_0}(u-Bw_A)+\frac{1}{2}\langle x,\nabla^{\Sigma_0}(u-Bw_A)\rangle -\langle V_2,\nabla^{\Sigma_0}(u-Bw_A)\rangle\\
\geq  \left(\frac{1}{2}+V_1\right)(u-Bw_A).
\end{eqnarray*}

The constant $A$ being fixed, take $R_1$ such that $1/2+V_1\geq 1/4$ on $\{r\geq R_1\}$ and choose $B$ large enough such that $$\sup_{r=R_1}(u-Bw_A)\leq 0.$$ By applying the maximum principle to the previous differential inequality satisfied by $u-Bw_A$, we arrive at
\begin{eqnarray}
\sup_{R_1\leq r\leq R}(u-Bw_A)=\max\left\{0,\sup_{r=R}(u-Bw_A)\right\}.
\end{eqnarray}
Since both $u$ and $w_A$ go to $0$ at infinity, the expected decay on $u$ follows and $B$ depends only on the expected quantities.

The same idea applies to the estimate of a solution $v$ satisfying (\ref{app-jac-fiel}) with 
$$Q=\textit{O}\left(r^{-n-3}e^{-\frac{r^2}{4}}\right)\, .$$
Indeed, according to (\ref{est-barrier-fct}), the function $v-Bw_A$ satisfies:
\begin{equation*}
\begin{split}
&\Delta_{\Sigma_0}(v-Bw_A)+\frac{1}{2}\langle x,\nabla^{\Sigma_0}(v-Bw_A)\rangle - \frac{(v-Bw_A)}{2}\geq\\
& \frac{BA}{2r^2}w_A-\frac{C}{r^2}r^{-n-1}e^{-\frac{r^2}{4}}\geq\frac{BA-2C}{2r^2}w_A>0,
\end{split}
\end{equation*}
if $B>B(A,C)$ and  
where we used the assumption on the righthand side of (\ref{app-jac-fiel}). Again, by choosing $B$ sufficiently large such that $\sup_{r=R_1}(v-Bw_A)\leq 0$, the maximum principle applied to the previous differential inequality shows that $\sup_{R_1\leq r\leq R}(v-Bw_A)=\max\{0,\sup_{r=R}(v-Bw_A)\}$. Since both $v$ and $w_A$ go to $0$ at infinity, one gets the expected decay on $v$.

In case 
$$Q=\textit{O}\left(r^{-n-1}e^{-\frac{r^2}{4}}\right)$$ 
we have that the function $w^{\varepsilon}:=r^{-n-1+\varepsilon}e^{-\frac{r^2}{4}}$ for some positive $\varepsilon$ is a good barrier function by the following estimates:
\begin{equation}
\begin{split}
&\Delta_{\Sigma_0}w^{\varepsilon}+\frac{1}{2}\langle x,\nabla^{\Sigma_0}w^{\varepsilon}\rangle - \frac{1-\varepsilon}{2}w^{\varepsilon}=\textit{O}(r^{-2})w^{\varepsilon},\\
&\Delta_{\Sigma_0}\left(v-Bw^{\varepsilon}\right)+\frac{1}{2}\langle x,\nabla^{\Sigma_0}\left(v-Bw^{\varepsilon}\right)\rangle =\\
& \left(\frac{1}{2}+\textit{O}(r^{-2})\right)\left(v-Bw^{\varepsilon}\right)+B\left(\frac{\varepsilon}{2}+\textit{O}(r^{-2})\right)w^{\varepsilon}+\textit{O}\left(r^{-n-1}e^{-\frac{r^2}{4}}\right),
\end{split}
\end{equation}
where $B$ is any positive constant. Since $w^{\varepsilon}$ decays slower than the data $Q$, the expected estimate can be proved along the same lines of the previous cases.

Finally, it remains to prove estimates on the first and second derivatives of the rescaled function $\hat{u}:=r^{n+1}e^{\frac{r^2}{4}}u.$ Before doing so, we note that the interpolation inequalities (\ref{lemm:interpolation}) applied to $u$ show that the higher derivatives of $u$ decay exponentially, i.e. 
 $$|\nabla_{\Sigma_0}^iu|\leq Ce^{-\frac{\varepsilon r^2}{4}}$$ 
for $i\in\{1,\dots ,4\}$ and some $\varepsilon\in(0,1)$. Note that the constant again depends continuously on 
\begin{eqnarray*}
\text{$n$, $\sup_{\bar{E}_{0,R_0}}r^{1+j}|\nabla^{\Sigma_i,j}A_{\Sigma_i}|$ for $j\in \{0,\ldots,3\}$, $i=0,1$ and $\sup_{\bar{E}_{0,R_0}}|u|$.}
\end{eqnarray*}
 and thus continuously on the link $\Gamma$.

By the previous discussion, we could use equation (\ref{eq.1}) by treating the righthand side as a data since it decays much faster than the expected decay for $u$. Nonetheless, since Bernstein-Shi type estimates use the linear structure of the equation under consideration in an essential way, we work with (\ref{equ-sch-evo-u}) instead: the function $V_1$ (respectively the vector field $V_2$) is now decaying like $r^{-2}$ (respectively like $r^{-3}$) together with its first and second covariant derivatives: 
\begin{equation*}
\nabla_{\Sigma_0}^iV_1=\textit{O}(r^{-2-i}),\quad \nabla_{\Sigma_0}^iV_2=\textit{O}(r^{-3-i}),\quad i=0,1,2,
\end{equation*}
where the estimates $\textit{O}(\cdot)$ are continuously depending on 
\begin{eqnarray*}
\text{$n$, $\sup_{\bar{E}_{0,R_0}}r^{1+j}|\nabla^{\Sigma_i,j}A_{\Sigma_i}|$ for $j\in \{0,\ldots,3\}$, $i=0,1$ and $\sup_{\bar{E}_{0,R_0}}|u|$.}
\end{eqnarray*}
From this remark, we compute the equation satisfied by $\hat{u}$ (see also \cite[Section 7]{Bernstein17}):
\begin{eqnarray}\label{evo-equ-u-hat}
\Delta_{\Sigma_0}\hat{u}-\frac{1}{2}\langle x,\nabla^{\Sigma_0}\hat{u}\rangle&=& W_1\hat{u}+\langle W_2,\nabla^{\Sigma_0} \hat{u}\rangle,
\end{eqnarray}
where $W_1$ is a function defined on $\bar{E}_{0,R_0}$ such that $\nabla^{\Sigma_0,i}W_1=\textit{O}(r^{-2-i})$ for $i=0,1,2$ and where $W_2$ is a vector field on $\bar{E}_{0,R_0}$ such that $\nabla^{\Sigma_0,i}W_2=\textit{O}(r^{-1-i})$ for $i=0,1,2$. Again, notice that all the estimates $\textit{O}(\cdot)$ in (\ref{evo-equ-u-hat}) are depending on $u$ and the rescaled derivatives of the second fundamental form (up to three) only.

The second step consists in computing the evolution equation satisfied by the gradient of $\hat{u}$:
\begin{equation*}
\begin{split}
\left(\Delta_{\Sigma_0}-\frac{1}{2}\langle x,\nabla^{\Sigma_0}\cdot\rangle-\frac{1}{2}\right)\left(\nabla^{\Sigma_0}\hat{u}\right)&= \hat{u}\nabla^{\Sigma_0}W_1+W_1\nabla^{\Sigma_0}\hat{u}\\
&\ \ \ + \nabla^{\Sigma_0}W_2\ast \nabla^{\Sigma_0} \hat{u}+W_2\ast\nabla^{\Sigma_0,2}\hat{u}.
\end{split}
\end{equation*}
In particular, by considering the pointwise squared norm $| \nabla^{\Sigma_0}\hat{u}|^2$:
\begin{equation}\label{est-evo-equ-norm-first-der-u-hat}
\begin{split}
\left(\Delta_{\Sigma_0}-\frac{1}{2}\langle x,\nabla^{\Sigma_0}\cdot\rangle\right)| \nabla^{\Sigma_0}\hat{u}|^2&=2|\nabla^{\Sigma_0,2}\hat{u}|^2+|\nabla^{\Sigma_0}\hat{u}|^2+2\hat{u}\langle\nabla^{\Sigma_0}W_1,\nabla^{\Sigma_0}\hat{u}\rangle\\
&\ \ \ +2W_1|\nabla^{\Sigma_0}\hat{u}|^2+2\langle\nabla^{\Sigma_0}W_2\ast \nabla^{\Sigma_0} \hat{u},\nabla^{\Sigma_0}\hat{u}\rangle\\
&\ \ \ +\langle W_2\ast\nabla^{\Sigma_0,2}\hat{u},\nabla^{\Sigma_0}\hat{u}\rangle\\
&\geq |\nabla^{\Sigma_0,2}\hat{u}|^2+|\nabla^{\Sigma_0}\hat{u}|^2-c_1r^{-2}|\nabla^{\Sigma_0}\hat{u}|^2\\
&\ \ \ -c_2r^{-4}\hat{u}^2,
\end{split}
\end{equation}
where $c_1$ and $c_2$ are positive constants independent of $r\geq R_0$ and where we used Young's inequality to absorb the second derivatives of $\hat{u}$.

Finally we consider the norm of the rescaled derivatives $r^2|\nabla^{\Sigma_0}\hat{u}|^2$ of $\hat{u}$. 

Recall first that $r^2$ is an approximate eigenfunction of the drift laplacian $\Delta_{\Sigma_0}-\frac{1}{2}\langle x,\nabla^{\Sigma_0}\cdot\rangle$ associated to the eigenvalue $-1$:
\begin{eqnarray}
\Delta_{\Sigma_0}r^2-\frac{1}{2}\langle x,\nabla^{\Sigma_0}r^2\rangle=-r^2+\textit{O}(1).\label{evo-equ-pot-fct}
\end{eqnarray}
Therefore, (\ref{est-evo-equ-norm-first-der-u-hat}) together with the previous observation lead to:
\begin{equation}\label{evo-equ-resc-der-u-hat}
\begin{split}
\left(\Delta_{\Sigma_0}-\frac{1}{2}\langle x,\nabla^{\Sigma_0}\cdot\rangle\right)&\left(r^2|\nabla^{\Sigma_0}\hat{u}|^2\right)=r^2\left(\Delta_{\Sigma_0}-\frac{1}{2}\langle x,\nabla^{\Sigma_0}\cdot\rangle\right)|\nabla^{\Sigma_0}\hat{u}|^2\\
&\ \ \ +2\langle\nabla^{\Sigma_0}r^2,\nabla^{\Sigma_0}|\nabla^{\Sigma_0}\hat{u}|^2\rangle\\
&\ \ \ +|\nabla^{\Sigma_0}\hat{u}|^2
\left(\Delta_{\Sigma_0}-\frac{1}{2}\langle x,\nabla^{\Sigma_0}\cdot\rangle\right)r^2\\
&\geq \frac{r^2}{2}|\nabla^{\Sigma_0,2}\hat{u}|^2-c_1r^{-2}\left(r^2|\nabla^{\Sigma_0}\hat{u}|^2\right)-c_2r^{-2}\hat{u}^2.
\end{split}
\end{equation}
Moreover, (\ref{evo-equ-u-hat}) implies the following differential inequality:
\begin{eqnarray}\label{evo-equ-u-hat-2}
\left(\Delta_{\Sigma_0}-\frac{1}{2}\langle x,\nabla^{\Sigma_0}\cdot\rangle\right)\hat{u}^2&\geq&|\nabla^{\Sigma_0}\hat{u}|^2-c_1r^{-2}|\hat{u}|^2.
\end{eqnarray}
 We can start to prove so called Bernstein-Shi type estimates by considering the function $F:=(a^2+\hat{u}^2)r^2|\nabla^{\Sigma_0}\hat{u}|^2$ where $a$ is a positive constant to be chosen later. From now on, we denote by $c$ a positive constant that is independent of the radial function $r\geq R_0$ and which may vary from line to line. Inequalities (\ref{evo-equ-resc-der-u-hat}) and (\ref{evo-equ-u-hat-2}) give:
 \begin{equation}
 \begin{split}
\Big(&\Delta_{\Sigma_0}-\frac{1}{2}\langle x,\nabla^{\Sigma_0}\cdot\rangle\Big)F=r^2|\nabla^{\Sigma_0}\hat{u}|^2\left(\Delta_{\Sigma_0}-\frac{1}{2}\langle x,\nabla^{\Sigma_0}\cdot\rangle\right)\hat{u}^2\\
&\qquad\qquad+2\langle\nabla^{\Sigma_0}\hat{u}^2,\nabla^{\Sigma_0}(r^2|\nabla^{\Sigma_0}\hat{u}|^2)\rangle\\
&\qquad\qquad+(a^2+\hat{u}^2)\left(\Delta_{\Sigma_0}-\frac{1}{2}\langle x,\nabla^{\Sigma_0}\cdot\rangle\right)(r^2|\nabla^{\Sigma_0}\hat{u}|^2)\\
&\geq \frac{\left(r^2|\nabla^{\Sigma_0}\hat{u}|^2\right)^2}{r^2}-\frac{c}{r^2}\hat{u}^2\left(r^2|\nabla^{\Sigma_0}\hat{u}|^2\right)+2\langle\nabla^{\Sigma_0}\hat{u}^2,\nabla^{\Sigma_0}(r^2|\nabla^{\Sigma_0}\hat{u}|^2)\rangle\\
&\qquad\qquad+(a^2+\hat{u}^2)\left(\frac{r^2}{2}|\nabla^{\Sigma_0,2}\hat{u}|^2-cr^{-2}\left(r^2|\nabla^{\Sigma_0}\hat{u}|^2\right)-cr^{-2}\hat{u}^2\right)\\
&\geq \frac{\left(r^2|\nabla^{\Sigma_0}\hat{u}|^2\right)^2}{2r^2}+(a^2+\hat{u}^2)\frac{r^2}{2}|\nabla^{\Sigma_0,2}\hat{u}|^2-\frac{c}{r^2}F-\frac{c}{r^2}\hat{u}^2(\hat{u}^2+a^2)\\
&\qquad\qquad+2\langle\nabla^{\Sigma_0}\hat{u}^2,\nabla^{\Sigma_0}(r^2|\nabla^{\Sigma_0}\hat{u}|^2)\rangle.
\end{split}
\end{equation}
Now,
\begin{equation}
\begin{split}
2|\langle\nabla^{\Sigma_0}\hat{u}^2,\nabla^{\Sigma_0}(r^2|\nabla^{\Sigma_0}\hat{u}|^2)\rangle|&\leq c|\hat{u}|(r^2|\nabla^{\Sigma_0}\hat{u}|^2)|\nabla^{\Sigma_0,2}\hat{u}|\\
&\ \ \ +c|\hat{u}||\nabla^{\Sigma_0}\hat{u}|^2(r|\nabla^{\Sigma_0}\hat{u}|)\\
&\leq \frac{\left(r^2|\nabla^{\Sigma_0}\hat{u}|^2\right)^2}{4r^2}+c\hat{u}^2r^2|\nabla^{\Sigma_0,2}\hat{u}|^2+c\frac{F}{r^2}.
\end{split}
\end{equation}
Consequently, if $a$ is taken proportional to $\sup_{\bar{E}_{0,R_0}}|\hat{u}|$, i.e. if $$a:=\alpha \sup_{\bar{E}_{0,R_0}}|\hat{u}|,$$ with $\alpha$ a universal positive constant sufficiently large such that the terms involving the second derivatives of $\hat{u}$ can be absorbed, then
 \begin{equation}\label{evo-equ-F}
\begin{split}
r^2 \left(\Delta_{\Sigma_0}-\frac{1}{2}\langle x,\nabla^{\Sigma_0}\cdot\rangle\right)F&\geq c\left(r^2|\nabla^{\Sigma_0}\hat{u}|^2\right)^2-cF-c\hat{u}^2(\hat{u}^2+a^2)\\
&\geq \frac{c}{a^4}F^2 -ca^4.
\end{split}
\end{equation}
In order to use the maximum principle, we need to localize the previous differential inequality with the help of a cut-off function $\varphi_R:\bar{E}_{0,R_0}\rightarrow[0,1]$ such that $\varphi_R(x)=\eta(r(x)/R)$ where $\eta:[0,+\infty)\rightarrow[0,1]$ is a non-increasing smooth function with compact support in $[0,2]$ such that $\eta\equiv 1$ on $[0,1]$ and $(\eta')^2\leq c\eta$ together with $\eta''\geq -c$ for some positive constant $c$. Notice by construction that:
\begin{equation}
r^2\Delta_{\Sigma_0}\varphi_R\geq -c,\quad \frac{r^2|\nabla^{\Sigma_0}\varphi_R|^2}{\varphi_R}\leq c,\quad -\langle x,\nabla^{\Sigma_0}\varphi_R\rangle\geq 0.
\end{equation}
Therefore, by discarding the drift term $-\langle x,\nabla^{\Sigma_0}\varphi_R\rangle$,
 \begin{equation}\label{evo-equ-F-local}
\begin{split}
r^2 \varphi_R&\Big(\Delta_{\Sigma_0}-\frac{1}{2}\langle x,\nabla^{\Sigma_0}\cdot\rangle\Big)(\varphi_R F)\\
&\geq r^2 (\varphi_RF)\Big(\Delta_{\Sigma_0}-\frac{1}{2}\langle x,\nabla^{\Sigma_0}\cdot\rangle\Big)\varphi_R\\
&\ \ \ +2r^2\langle\nabla^{\Sigma_0}\varphi_R,\varphi_R \nabla^{\Sigma_0}F\rangle+r^2\varphi_R^2\Big(\Delta_{\Sigma_0}-\frac{1}{2}\langle x,\nabla^{\Sigma_0}\cdot\rangle\Big) F\\
&\geq -c(\varphi_R F)+2r^2\langle \nabla^{\Sigma_0}\varphi_R,\nabla^{\Sigma_0}(\varphi_R F)\rangle-2r^2\frac{|\nabla^{\Sigma_0}\varphi_R|^2}{\varphi_R}(\varphi_RF)\\
&\ \ \ +\frac{c}{a^4}(\varphi_R F)^2 -ca^4\\
&\geq 2r^2\langle \nabla^{\Sigma_0}\varphi_R,\nabla^{\Sigma_0}(\varphi_R F)\rangle+ \frac{c}{a^4}(\varphi_R F)^2 -ca^4,
\end{split}
\end{equation}
which gives the expected result by applying the maximum principle to $\varphi_R F$: 
\begin{equation}
\sup_{\bar{E}_{0,R_0}}r^2|\nabla^{\Sigma_0}\hat{u}|^2\leq c\bigg(\sup_{\bar{E}_{0,R_0}}|\hat{u}|^2+\sup_{\partial\bar{E}_{0,R_0}}r^2|\nabla^{\Sigma_0}\hat{u}|^2\bigg).
\end{equation}
By invoking local parabolic (or elliptic) estimates for $\hat{u}$, $$\sup_{\bar{E}_{0,R_0}}r^2|\nabla^{\Sigma_0}\hat{u}|^2\leq c\sup_{\bar{E}_{0,R_0/2}}|\hat{u}|^2.$$

In order to get an estimate on the second covariant derivatives of the rescaled function $\hat{u}$, we proceed similarly to the previous Bernstein-Shi type estimates on the first covariant derivatives of $\hat{u}$. Starting from (\ref{evo-equ-u-hat}), the tensor $\nabla^{\Sigma_0,2}\hat{u}$ satisfies the following qualitative evolution equation:
\begin{equation*}
\begin{split}
\left(\Delta_{\Sigma_0}-\frac{1}{2}\langle x,\nabla^{\Sigma_0}\cdot\rangle-1\right)\left(\nabla^{\Sigma_0,2}\hat{u}\right)&=\hat{u}\nabla^{\Sigma_0,2}W_1+\nabla^{\Sigma_0}\hat{u}\ast\nabla^{\Sigma_0}W_1\\
&\ \ \ +W_1\nabla^{\Sigma_0,2}\hat{u}+ \nabla^{\Sigma_0,2}W_2\ast \nabla^{\Sigma_0} \hat{u}\\
&\ \ \ +\nabla^{\Sigma_0}W_2\ast \nabla^{\Sigma_0,2} \hat{u}+W_2\ast\nabla^{\Sigma_0,3}\hat{u}.
\end{split}
\end{equation*}
In particular, by considering the pointwise squared norm $| \nabla^{\Sigma_0,2}\hat{u}|^2$ together with the bounds on (the covariant derivatives) of $W_1$ and $W_2$:
\begin{equation}\label{est-evo-equ-norm-sec-der-u-hat}
\begin{split}
\left(\Delta_{\Sigma_0}-\frac{1}{2}\langle x,\nabla^{\Sigma_0}\cdot\rangle\right)| \nabla^{\Sigma_0,2}\hat{u}|^2&\geq |\nabla^{\Sigma_0,2}\hat{u}|^2+|\nabla^{\Sigma_0,2}\hat{u}|^2\\
&\ \ \ -c_1r^{-2}|\nabla^{\Sigma_0,2}\hat{u}|^2-c_2r^{-4}|\nabla^{\Sigma_0}\hat{u}|^2\\ 
&\ \ \ -c_3r^{-6}\hat{u}^2,
\end{split}
\end{equation}
where $c_1$ and $c_2$ are positive constants independent of $r\geq R_0$ and where we used Young's inequality to absorb the third derivatives of $\hat{u}$. Using again (\ref{evo-equ-pot-fct}), one can absorb the linear term $|\nabla^{\Sigma_0,2}\hat{u}|^2$ with the help of the function $r^4$ to get:
\begin{equation}\label{evo-equ-resc-sec-der-u-hat}
\begin{split}
\left(\Delta_{\Sigma_0}-\frac{1}{2}\langle x,\nabla^{\Sigma_0}\cdot\rangle\right)\left(r^4| \nabla^{\Sigma_0,2}\hat{u}|^2\right)&\geq \frac{1}{2}r^4|\nabla^{\Sigma_0,3}\hat{u}|^2\\
&\ \ \ -c_1r^{-2}\left(r^4|\nabla^{\Sigma_0,2}\hat{u}|^2\right)\\
&\ \ \ -c_2|\nabla^{\Sigma_0}\hat{u}|^2-c_3r^{-2}\hat{u}^2.
\end{split}
\end{equation}
From there, one considers similarly an auxiliary function $$F_2:=(a^2+r^2|\nabla^{\Sigma_0}\hat{u}|^2)r^4|\nabla^{\Sigma_0,2}\hat{u}|^2,$$ where $a$ is a positive constant to be defined later. One can show that $F_2$ satisfies an analogous differential inequality to the one satisfied by $F$ given by (\ref{evo-equ-F}) by using the previous bound on $\nabla^{\Sigma_0}\hat{u}$: in this case, (\ref{evo-equ-resc-der-u-hat}) plays the role of (\ref{evo-equ-u-hat-2}) and the constant $a$ is again chosen proportionally to $\sup_{\bar{E}_{0,R_0}}|\hat{u}|$. Localizing the auxiliary function $F_2$ in order to use the maximum principle works exactly the same way we did in (\ref{evo-equ-F-local}).
\end{proof}

As a consequence of Theorem \ref{theo-ptwise-est}, one can make sense of the radial limit of $\hat{u}$, called the $0$-trace of $\hat{u}$ at infinity and denoted by $\tr_{\infty}^0\hat{u}$ (see \cite{BernsteinWang17}):
\begin{corollary}\label{coro-def-trace-inf}
Under the assumptions of Theorem \ref{theo-ptwise-est}, the radial limit 
\begin{eqnarray}\label{eq:convergence}
\tr_{\infty}^0(\hat{u}):=\lim_{r\rightarrow+\infty}r^{n+1}e^{\frac{r^2}{4}}u,
\end{eqnarray}
 exists and defines a  $C^{1,1}$ function on the link $\Gamma$ of the asymptotic cone $C(\Gamma)$.
\end{corollary}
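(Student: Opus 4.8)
The plan is to establish convergence of the radial limit along each ray by using the $C^1$ and $C^2$ estimates on $\hat u = r^{n+1}e^{r^2/4}u$ from Theorem \ref{theo-ptwise-est} to control how $\hat u$ varies, both radially and transversally, far out along the cone. First I would work in the asymptotically conical coordinates on $\bar E_{0,R_0}$: a point far out is described by $(r,\omega)$ with $\omega \in \Gamma$, and the radial vector field is, up to errors controlled by \eqref{eq.0}, the unit radial direction $\partial_r$. The estimate $r\,|\nabla_{\Sigma_0}\hat u| \le C$ gives $|\partial_r \hat u| \le C/r$, so that for fixed $\omega$ the function $r \mapsto \hat u(r,\omega)$ is Cauchy as $r \to \infty$: indeed $|\hat u(r_2,\omega) - \hat u(r_1,\omega)| \le \int_{r_1}^{r_2} |\partial_r \hat u|\,dr \le C\log(r_2/r_1)$, which is not yet enough, so one must do slightly better. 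The point is that the $C^1$ bound alone gives only logarithmic control; to get genuine convergence I would feed the $C^0$ decay of $u$ back in. Actually the cleanest route: from \eqref{evo-equ-u-hat} the rescaled function $\hat u$ solves $\Delta_{\Sigma_0}\hat u - \tfrac12\langle x,\nabla^{\Sigma_0}\hat u\rangle = W_1\hat u + \langle W_2,\nabla^{\Sigma_0}\hat u\rangle$ with $W_1 = O(r^{-2})$, $W_2 = O(r^{-1})$; projecting onto the radial direction and using $\langle x, \nabla^{\Sigma_0}\rangle \approx r\,\partial_r$ shows $\partial_r \hat u$ decays like $r^{-1}$ times the right-hand side over the sphere, i.e.\ one integrates the PDE. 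The honest argument is to average $\hat u$ over spheres $\Gamma_r := \Sigma_0 \cap \mathbb{S}_r$ and differentiate in $r$.

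So the key steps, in order, are: (1) Set $\phi(r,\omega) := \hat u$ restricted to the rescaled sphere at radius $r$, pulled back to $\Gamma$ via the conical structure; by the $C^2$ estimate $r^2|\nabla^2_{\Sigma_0}\hat u| \le C$ and the $C^1$ estimate, the family $\{\phi(r,\cdot)\}_{r \ge 2R_0}$ is bounded in $C^{1,1}(\Gamma)$ uniformly in $r$. (2) Show $r \mapsto \phi(r,\cdot)$ converges in $C^0(\Gamma)$: using equation \eqref{evo-equ-u-hat} written in polar coordinates, the radial derivative $\partial_r \phi$ satisfies an ODE-type bound $|\partial_r(r^\alpha \phi)| \le C r^{\alpha - 2}\|\phi\|_{C^1}$ for a suitable conjugating power $\alpha$ (coming from matching the $-\tfrac12\langle x,\nabla\rangle$ term), which is integrable, giving a uniform Cauchy estimate $\|\phi(r_2,\cdot) - \phi(r_1,\cdot)\|_{C^0} \le C r_1^{-1}$. (3) Interpolate: a sequence bounded in $C^{1,1}(\Gamma)$ that converges in $C^0(\Gamma)$ converges in $C^{1,\beta}(\Gamma)$ for every $\beta < 1$, and the limit $\tr_\infty^0(\hat u)$ lies in $C^{1,1}(\Gamma)$ by lower semicontinuity of the $C^{1,1}$-seminorm under $C^1$ convergence (equivalently, by Arzel\`a--Ascoli applied to $\nabla_\Gamma \phi(r,\cdot)$, which is uniformly Lipschitz). (4) Finally, the radial limit \eqref{eq:convergence} as stated (pointwise limit of $r^{n+1}e^{r^2/4}u$ along each ray) coincides with $\lim_r \phi(r,\cdot)$ because the discrepancy between the exponential normal graph coordinate and the true radial coordinate on $\Gamma$ is an $O(r^{-1})$ perturbation, which is killed by the $C^1$ bound $|\partial_r \hat u| \le C/r$.

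The main obstacle I expect is step (2): extracting genuine convergence (not merely boundedness or logarithmically-slow oscillation) of $\hat u$ along rays. The naive bound from $r|\nabla_{\Sigma_0}\hat u| \le C$ only gives $\hat u(r,\omega) = O(\log r)$-variation, which is consistent with non-convergence. One must therefore use the structure of equation \eqref{evo-equ-u-hat} — specifically that the drift term $-\tfrac12\langle x, \nabla^{\Sigma_0}\hat u\rangle$, which is the dominant term and behaves like $-\tfrac r2 \partial_r \hat u$, is balanced against $\Delta_{\Sigma_0}\hat u$ and lower-order terms that are $O(r^{-2})\hat u + O(r^{-1})|\nabla \hat u| = O(r^{-2})$ after inserting the $C^0$ and $C^1$ bounds — to conclude $\partial_r \hat u = O(r^{-3})$ (much better than the a priori $O(r^{-1})$), which is integrable with room to spare. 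Concretely: rearranging, $\tfrac12 \langle x, \nabla^{\Sigma_0}\hat u\rangle = \Delta_{\Sigma_0}\hat u - W_1\hat u - \langle W_2, \nabla^{\Sigma_0}\hat u\rangle$, and every term on the right is $O(r^{-2})$ by Theorem \ref{theo-ptwise-est}, hence $|\partial_r \hat u| \le C r^{-3}$, so $\int^\infty |\partial_r \hat u|\,dr < \infty$ and convergence along rays follows, with rate $O(r^{-2})$, uniform in $\omega$. The same manipulation applied to $\nabla_\Gamma \hat u$ (using the evolution equation for $\nabla^{\Sigma_0}\hat u$ from the proof of Theorem \ref{theo-ptwise-est}) gives uniform convergence of the tangential gradient, and the $C^2$ bound then upgrades this to $C^{1,\beta}$ convergence with $C^{1,1}$ limit.
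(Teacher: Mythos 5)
Your argument is correct and follows essentially the same route as the paper: rearranging equation \eqref{evo-equ-u-hat} and inserting the bounds \eqref{ptwise-est-u} to see that $\langle x,\nabla^{\Sigma_0}\hat u\rangle=O(r^{-2})$, hence $\partial_r\hat u=O(r^{-3})$, then integrating radially to get convergence at quadratic rate, and finally invoking Arzel\`a--Ascoli via the rescaled derivative bounds for the regularity of the limit. The extended discussion of the "main obstacle" is precisely the paper's one-line argument; the preliminary detours (the log-variation remark, the vague conjugating-power $\alpha$) are correctly superseded by that final step.
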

\begin{remark}
 The $0$-trace at infinity of $\hat{u}$ is shown to be a function in $H^1(\Gamma)$ in \cite{BernsteinWang17}, with the corresponding convergence to \eqref{eq:convergence} only in $L^2$. The convergence in \eqref{eq:convergence} in Corollary \ref{coro-def-trace-inf} is in $C^{1,\alpha}$  by \eqref{ptwise-est-u} and provides the continuity of the differential of $\tr^0_{\infty}(\hat{u})$ as a function on the link $\Gamma$ by assuming more regularity on the asymptotic cone at infinity than in \cite{BernsteinWang17}. Assuming both the asymptotic cone and the convergence to it to be smooth, one can show $\tr^0_{\infty}(\hat{u})$ is a smooth function on $\Gamma$ by adapting the proof of Corollary \ref{coro-def-trace-inf}.
\end{remark}
\begin{proof}
According to equation (\ref{evo-equ-u-hat}) together with Theorem \ref{theo-ptwise-est}, especially (\ref{ptwise-est-u}),  one arrives at:
\begin{eqnarray*}
\langle x,\nabla^{\Sigma_0}\hat{u}\rangle=\textit{O}(r^{-2}).
\end{eqnarray*}
Integrating this differential relation along the radial direction proves the existence of a limit denoted by $\tr_{\infty}^0(\hat{u})$ as the radial distance goes to $+\infty$. Moreover, it also shows that $\hat{u}$ converges to $\tr_{\infty}^0(\hat{u})$ quadratically. Finally, since the rescaled derivatives of $\hat{u}$ are bounded by (\ref{ptwise-est-u}), an application of Arzel\`a-Ascoli's theorem leads to the expected regularity of $\tr^0_{\infty}(\hat{u})$, i.e. $\tr^0_{\infty}(\hat{u})\in C^1(\Gamma)$.
 \end{proof}

\subsection{Estimates for Jacobi fields}
Let  $(\Gamma_s)_{-\eps<s<\eps}$ be a continuously differentiable  family of $C^{5}$ hypersurfaces of $\mathbb{S}^n$. Assume that $(\Sigma_{0,s}, \Sigma_{1,s})_{-\eps<s<\eps}$ is a continuously differentiable family of expanders such that both $\Sigma_{0,s}, \Sigma_{1,s}$ are asymptotic to $C_s:=C(\Gamma_s)$.
  Let $\psi:\Gamma\rightarrow \mathbb{R}$ be the normal variation speed at $s=0$ of the family $\Gamma_s \subset \mathbb{S}^n$. Furthermore, we denote with
  $$\pi_{\Gamma_0}:\R^{n+1}\rightarrow\Gamma_0$$ 
 the composition of the closest point projection $\pi_{C_0}$ to $C(\Gamma_0)$ composed with the projection $C(\Gamma_0)\rightarrow \Gamma_0$ on the link. 
 
 \begin{lemma}\label{lem: first jacobi estimate}
 Let $v$ be the Jacobi field induced on $\Sigma_0$ by the above variation. Then we can write
 $$ v = r \cdot \psi \circ \pi_{\Gamma_0} + w $$
 where $r$ is the ambient radius function and $w$ satisfies
 $$ |\nabla ^l w| \leq \frac{c}{r^{1+l}} \qquad \text{for} \ l =0,1\, .$$
  \end{lemma}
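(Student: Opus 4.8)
The plan is to produce $v$ explicitly from the graphical descriptions of the family, split off $r\cdot\psi\circ\pi_{\Gamma_0}$ as its leading part, and bound the remainder by a barrier argument as for the improved estimates \eqref{eq.0}. First I would record the two graphical descriptions available near infinity: outside a large ball $\Sigma_{0,s}$ is a normal graph over $C(\Gamma_s)$ with height function $f_s$ satisfying $|\nabla^i_{C_s}f_s|\le Cr^{-1-i}$ uniformly in $s$ (by \eqref{eq.0}, together with the cruder \eqref{eq.-1} for $i\le 5$), while $C(\Gamma_s)$ is itself a normal graph over $C(\Gamma_0)$---obtained by coning off the graph of $\Gamma_s$ over $\Gamma_0$ inside $\mS^n$---whose height function, by scale invariance of cones, has the special form $r\cdot\phi_s\circ\pi_{\Gamma_0}$, where $\phi_s:\Gamma_0\ra\R$ is of class $C^5$ with $\phi_0\equiv 0$ and $\del_s\phi_s|_{s=0}=\psi$. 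Composing these two normal graphs---the composition error being controlled by the curvature $\textit{O}(r^{-1})$ of $C(\Gamma_0)$ together with the bounds on $f_s$---turns $\Sigma_{0,s}$ into a normal graph over $C(\Gamma_0)$; differentiating this description at $s=0$ and taking the component normal to $\Sigma_0$, using $\nu_{\Sigma_0}=\nu_{C(\Gamma_0)}+\textit{O}(r^{-2})$ and $|\nabla^i f_0|\le Cr^{-1-i}$, one obtains on $\bar E_{0,R_0}$
\[
 v = r\cdot\psi\circ\pi_{\Gamma_0} + \dot f + E,\qquad \dot f:=\del_s f_s|_{s=0},
\]
with $|E|\le Cr^{-1}$, $|\nabla^{\Sigma_0}E|\le Cr^{-2}$, where $\dot f$ is transported to $\Sigma_0$ via the graph map and the gradient bound on $E$ also uses the $C^5$-in-space regularity of the family (replacing $\psi\circ\pi_{\Gamma_0}$ of the base point by its value at $x\in\Sigma_0$ costs another $\textit{O}(r^{-1})$). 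Thus the lemma reduces to the bounds $|\dot f|\le Cr^{-1}$ and $|\nabla^{\Sigma_0}\dot f|\le Cr^{-2}$, after which one sets $w:=\dot f+E$.

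Next I would write the equation for $\dot f$: it solves the $s$-linearisation of the graphical expander equation over the moving cone, which, exactly as in \eqref{equ-sch-evo-u}, has the form
\[
 \Delta_{\Sigma_0}\dot f+\tfrac12\langle x,\nabla^{\Sigma_0}\dot f\rangle-\tfrac{\dot f}{2}=V_1\dot f+\langle V_2,\nabla^{\Sigma_0}\dot f\rangle+g,
\]
with $V_1=\textit{O}(r^{-2})$ (absorbing $|A_{\Sigma_0}|^2$) and $V_2=\textit{O}(r^{-3})$, plus an inhomogeneity $g$ coming from the $s$-dependence of the base cone; since $H_{C(\Gamma_s)}(rx)\sim r^{-1}H_{\Gamma_s}(x)$ one gets $|g|\le Cr^{-1}$ and, differentiating, $|\nabla^{\Sigma_0}g|\le Cr^{-2}$. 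Equivalently, a direct computation shows that $\bar v:=r\cdot\psi\circ\pi_{\Gamma_0}$ is an approximate Jacobi field, $\Delta_{\Sigma_0}\bar v+\tfrac12\langle x,\nabla^{\Sigma_0}\bar v\rangle+(|A_{\Sigma_0}|^2-\tfrac12)\bar v=\textit{O}(r^{-1})$: on the exact cone the terms of order $r$ cancel, since $\Delta_{C(\Gamma_0)}(r\cdot\phi\circ\pi_{\Gamma_0})=r^{-1}(\Delta_{\Gamma_0}\phi+(n-1)\phi)$ while $\tfrac12\langle x,\nabla(r\cdot\phi\circ\pi_{\Gamma_0})\rangle-\tfrac12(r\cdot\phi\circ\pi_{\Gamma_0})=0$, and passing from $C(\Gamma_0)$ to $\Sigma_0$ only costs $\textit{O}(r^{-1})$, just as in the barrier computations in the proof of Theorem \ref{theo-ptwise-est}. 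In either formulation $\dot f$ (equivalently $w$) solves a drift-Jacobi equation whose inhomogeneity, of size $\textit{O}(r^{-1})$, decays much faster than the $\textit{O}(r)$ scale of $v$ itself.

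Finally I would run a barrier argument. Using $\langle x,\nabla^{\Sigma_0}r^{-1}\rangle=-r^{-1}+\textit{O}(r^{-5})$ and $|A_{\Sigma_0}|^2=\textit{O}(r^{-2})$, the operator $\mathcal{L}$ on the left of the $\dot f$-equation satisfies $\mathcal{L}(r^{-1})=-r^{-1}+\textit{O}(r^{-3})$, so $\dot f-Cr^{-1}$ is a strict subsolution and $\dot f+Cr^{-1}$ a strict supersolution of $\mathcal{L}$ once $C>C_0$ where $|g|\le C_0r^{-1}$; more generally $r^{\alpha}$ is a supersolution for every $\alpha<1$ and $r$ large, the zeroth order contribution being $(\tfrac{\alpha-1}{2}+\textit{O}(r^{-2}))r^{\alpha}<0$. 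Before comparing, one needs the a priori decay $\dot f=\textit{o}(r)$, and here I would use that $v$ is a genuine Jacobi field on $\Sigma_0$ of at most polynomial growth (from the $C^1$-dependence in $s$ and the asymptotically conical geometry), whose leading term at infinity is, by the asymptotic analysis of tempered Jacobi fields over the asymptotic cone, of the form $r\cdot(\text{function on }\Gamma)\circ\pi_{\Gamma_0}$ with remainder $\textit{o}(r)$---the function on $\Gamma$ being forced to equal $\psi$ by matching the variation of the asymptotic cone. Since the two radial solutions of the homogeneous drift operator grow like $r$ and decay like $r^{-n-1}e^{-r^2/4}$, the bound $\dot f=\textit{o}(r)$ places $\dot f$ strictly below the growing homogeneous solution, so the maximum principle for $\mathcal{L}$---whose negative zeroth order term $-\tfrac12+\textit{O}(r^{-2})$ and expanding drift control the behaviour at infinity---applied to $\dot f\mp Cr^{-1}$ yields $|\dot f|\le Cr^{-1}$, with $C$ fixed using that $\dot f$ is locally bounded. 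Interior elliptic estimates for $\dot f$, rescaled as in the proof of Theorem \ref{theo-ptwise-est}, upgrade this to $|\nabla^{\Sigma_0}\dot f|\le Cr^{-2}$, which finishes the proof.

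The hard part will be twofold: first, the bookkeeping in the initial step, i.e.\ checking that, after matching the two normal-graph descriptions, every error term is genuinely $\textit{O}(r^{-1})$, respectively $\textit{O}(r^{-2})$ on the first derivative, which is where the faster radial decay of $A_{\Sigma_0}$, cf.\ \eqref{eq.0.1}, enters; and second, securing the a priori decay $\dot f=\textit{o}(r)$ needed to run the maximum principle, since linear growth is precisely the growth rate of the Jacobi fields that do not vanish at infinity.
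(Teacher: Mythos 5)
Your decomposition step (writing $\Sigma_{0,s}$ as a graph over $C(\Gamma_s)$, itself graphed over $C(\Gamma_0)$, and taking the $s$-derivative) is essentially the same as the paper's, which parametrises $C(\Gamma_s)$ by a family $F_s$ moving in the normal direction with speed $r\tilde\psi\nu_{C_0}$ and writes $\Sigma_{0,s}$ as a graph with height $u_s$ over it. Where you genuinely depart is in the treatment of $\dot f = \partial_s u_s|_{s=0}$ (the paper's $w$): the paper does not derive its decay from a PDE argument at all. It simply records that the $C^1$-dependence in $s$ of the family of expanders \emph{in the Bernstein--Wang moduli space} $\mathcal{ACE}_n^{k,\alpha}$ (a Banach manifold modelled on weighted Hölder spaces, so that $C^1$ curves automatically have tangent vectors decaying at the rate $r^{-1-l}$) gives $|\nabla^l w|\le c\,r^{-1-l}$ for free, and then just tracks the bookkeeping for the normal-graph error terms. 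Your approach aims to replace this with an intrinsic barrier argument for the linearised graph equation, which is an interesting alternative but, as written, has two genuine gaps.

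First, the a priori bound you invoke, $\dot f = o(r)$, is attributed to an unproven ``asymptotic analysis of tempered Jacobi fields over the asymptotic cone.'' This is precisely the non-trivial point. The Jacobi operator over the conical end has, separated in spherical harmonics, two types of radial behaviour: one growing like $r$ (up to lower-order corrections) and one decaying like $r^{-n-1}e^{-r^2/4}$. Knowing that $v - r\psi\circ\pi_{\Gamma_0}$ misses the growing branch is essentially the content of the lemma, so the argument is close to circular unless one develops the Fourier/indicial analysis in detail (or, as the paper does, appeals to the weighted Banach manifold structure and reads the decay off from the hypothesis that the family is $C^1$ there).

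Second, even granting $\dot f = o(r)$, your barrier argument does not close. You use $B_{\alpha,\epsilon}:=Cr^{-1}+\epsilon r^{\alpha}$ with $\alpha<1$ as a supersolution, which is correct, but to conclude $\dot f \le Cr^{-1}$ on $\{R_1\le r\le R\}$ you need $\dot f - B_{\alpha,\epsilon}\le 0$ on the outer boundary $\{r=R\}$ for $R$ large. The bound $\dot f = o(R)$ gives only $\dot f\le \delta(R)\,R$ with $\delta(R)\to 0$, which is not dominated by $\epsilon R^{\alpha}$ for any fixed $\alpha<1$ since $\delta(R) R/R^{\alpha}\to\infty$ is perfectly possible. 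A strict supersolution growing like $r$ would dominate, but $r$ itself is an approximate eigenfunction with eigenvalue $0$ ($\mathcal L r = \textit{O}(r^{-1})$), so there is no room. You really need the quantitative a priori bound $\dot f = \textit{O}(r^{\alpha})$ for some $\alpha<1$, or equivalently $v - r\psi\circ\pi_{\Gamma_0} = \textit{O}(r^{\alpha})$, before the barrier can take over; and that bound is again essentially equivalent to the decay you are trying to prove. This is the step I would focus on if you want to make the PDE route rigorous.
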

 \begin{proof} We choose a family of local parametrisations $(F_s)_{-\varepsilon < s< \varepsilon}$ of the cones $C_s$. We can assume that this family of parametrisation moves in normal direction along the cones, i.e.
 $$\frac{\partial}{\partial s} \bigg|_{s=0} F_s = r\, \tilde{\psi} \, \nu_{C_0} $$
 where $\tilde{\psi}(x) = \psi(r^{-1} x)$ and $\nu_{C_0}$ is a choice of unit normal vector field along $C_0$. Note that a parametrisation of $\Sigma_s$ is then given by
 $$ x \mapsto F_s(x) + u_s(F_s(x)) \nu_{C_s}(x) $$
 where the graph functions $u_s: C(\Gamma_s) \setminus B_{R_0} \rightarrow \mathbb{R}$ satisfy
 $$ \big|\nabla^l_{\Sigma_s} u_s \big| \leq \frac{c}{r^{1+l}} \qquad \text{for} \ l =0,1\, .$$
 Furthermore, we can assume that the variation $w:= \tfrac{d}{ds}|_{s=0} u$ similarly satisfies
 $$ \big|\nabla^l_{\Sigma_0}  w \big| \leq \frac{c}{r^{1+l}} \qquad \text{for} \ l =0,1\, .$$
 Note that by interpolation inequalities from Appendix \ref{app:interpolate}, we also have for any $\delta >0$ that
 $$ \big|\nabla^l_{\Sigma_s} u_s \big| +  \big|\nabla^l_{\Sigma_s} w \big| \leq \frac{c(l,\delta) }{r^{1+l -\delta}}   \qquad {\text{for} \ 2 \leq l \leq 4} \, .$$
Denoting $u:= u_0$, $C:=C_0$, the above implies that we can compute the variation vector field $X$ along $\Sigma_0$ as
$$ X = (r\, \tilde{\psi} + w)\cdot  \nu_{C} - u\cdot  \nabla_{C} (r\, \tilde{\psi}) \ .$$
Working now at a point $x_0 \in C$ and $y_0 = x_0 + u(x_0) \nu_{C}$, we have again, see \eqref{eq:ng.1}, that
 $$ \nu_{\Sigma_0}(y_0) = - \nabla_{\Sigma_0}u (x_0)+ \nu_{C}(x_0) + Q_0(x_0,u,\nabla u) $$
where $|Q_0(p,u,Du)| \leq C(r^{-1}|u| + |\nabla u|^2)$. This implies that
$$v = \big\langle X, \nu_{\Sigma_0}\big\rangle = r_{C} \tilde{\psi} + w + \big\langle  \nabla_{C} (r\, \tilde{\psi}),  \nabla_{C} u \big\rangle\, u+ Q_1$$
where $Q_1 = \textit{O} \left(( r^{-1} |u| + |\nabla u|^2) (|r \tilde{\psi}| + |w| + |u|)\right)$ and $r_C = r \circ \pi_C$. Note further that
$$ r_C = r + O(r^{-1} u^2)\ ,$$    
along $\Sigma_0$. Together with the previous estimate, and replacing $w$ by $w + \big\langle  \nabla_{C} (r\, \tilde{\psi}),  \nabla_{C} u \big\rangle\, u+ Q_1$ implies the statement.
\end{proof}
The next lemma estimates more carefully the hessian of the first approximation of a Jacobi field as in Lemma \ref{lem: first jacobi estimate}. We denote with $U_{\delta}(C)$ the $\delta$-neighborhood of $C$ in $\R^{n+1}$.

\begin{lemma}\label{lem: hessian estimate}
 Let $ \tilde{\psi}:= \psi \circ \pi_{\Gamma_0}$ as above. Then for $l=0,1,2$, one has:
 $$ |D ^l \tilde{\psi}| \leq \frac{c}{r^{l}}\ .$$
 Let $x \in U_{\delta}(C)$ and $x_0 = \pi_C(x)$ denote with $e_0= \nu_c(x_0)$ and $e_1=\frac{x_0}{|x_0|}$. Then
 $$ D_i\tilde{\psi} =0 $$
 for $i=0,1$ as well as
 $$ D^2  \tilde{\psi}(x)(u,v) = 0$$
 for $u,v \in \text{span}\{e_0,e_1\}$ and
 $$ \big|D^2  \tilde{\psi}(x)(u,v)\big| \leq \frac{c}{r^2} |u| |v|$$
 for $v \in  (\text{span}\{e_0,e_1\})^\perp$.
  \end{lemma}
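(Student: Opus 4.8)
The plan is to exploit that $\tilde\psi=\psi\circ\pi_{\Gamma_0}$ is invariant under dilations and is constant both along the rays of $C:=C(\Gamma_0)$ and along its normal fibres. Since $C$ is a cone, the nearest point projection commutes with scalings, $\pi_C(\lambda x)=\lambda\,\pi_C(x)$, so $\pi_{\Gamma_0}(\lambda x)=\lambda\pi_C(x)/|\lambda\pi_C(x)|=\pi_{\Gamma_0}(x)$ and hence $\tilde\psi(\lambda x)=\tilde\psi(x)$ for every $\lambda>0$, as long as both points stay in the tubular neighbourhood of $C$ away from the vertex. On such a neighbourhood $\pi_C$ is $C^{4}$ (the cone being $C^{5}$ away from $0$), so $\pi_{\Gamma_0}=\pi_C/|\pi_C|$ is $C^{4}$, and $\psi$ is $C^{2}$ by the regularity assumed on the family $(\Gamma_s)$; thus $\tilde\psi$ is $C^{2}$ there, with all relevant norms controlled by the $C^5$-norm of $\Gamma$ and the $C^2$-norm of $\psi$.

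From the homogeneity I would read off the decay bounds directly. Differentiating $\tilde\psi(\lambda x)=\tilde\psi(x)$ shows that $D^l\tilde\psi$ is $(-l)$-homogeneous, i.e.\ $D^l\tilde\psi(x)=r^{-l}\,D^l\tilde\psi(x/r)$ with $r=|x|$. Since $\Gamma_0$ is compact and $\tilde\psi\in C^2$ on a neighbourhood of it in $\R^{n+1}$, the quantity $|D^l\tilde\psi|$ is bounded on a compact neighbourhood of $\Gamma_0$ inside $\{|x|=1\}$; together with the scaling identity this yields $|D^l\tilde\psi(x)|\le c\,r^{-l}$ for $l=0,1,2$.

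For the pointwise vanishing statements I would pass to tubular coordinates adapted to $C$: write $x=\rho\,p+t\,\nu_C(p)$ with $p=\pi_{\Gamma_0}(x)\in\Gamma_0$, $\rho=|\pi_C(x)|$ and $t$ the signed distance to $C$, valid once $|t|$ is a small fixed fraction of $\rho$. Then $x_0=\pi_C(x)=\rho\,p$, so $e_1=x_0/|x_0|=p$, while $e_0=\nu_C(x_0)=\nu_C(\rho p)=\nu_C(p)$ because the unit normal of a cone is $0$-homogeneous; since $T_p\Gamma_0$ is orthogonal to both $p$ and $\nu_C(p)$ and has the right dimension, $(\text{span}\{e_0,e_1\})^{\perp}=T_p\Gamma_0$. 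The crucial remark is that the affine $2$-plane $x+\text{span}\{e_0,e_1\}=x+\text{span}\{p,\nu_C(p)\}$ is contained in a single normal fibre of $C$: for $u,v\in\text{span}\{p,\nu_C(p)\}$ one has $x+su+\tau v=(\rho+\alpha)p+(t+\beta)\nu_C(p)$, whose nearest point on $C$ is $(\rho+\alpha)p$ (since $\nu_C(p)$ is still normal to $C$ at $(\rho+\alpha)p$), so $\pi_{\Gamma_0}(x+su+\tau v)=p$ and $\tilde\psi(x+su+\tau v)=\psi(p)$ is constant in $(s,\tau)$. Differentiating this in $(s,\tau)$ once gives $D_{e_0}\tilde\psi(x)=D_{e_1}\tilde\psi(x)=0$, and twice gives $D^2\tilde\psi(x)(u,v)=0$ for all $u,v\in\text{span}\{e_0,e_1\}$. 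The last estimate, $|D^2\tilde\psi(x)(u,v)|\le c\,r^{-2}|u||v|$ for $v\in(\text{span}\{e_0,e_1\})^{\perp}$ and arbitrary $u$, is then just a special case of the global bound $|D^2\tilde\psi|\le c\,r^{-2}$ from the previous paragraph.

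The one point that will need care, and which I expect to be the main (though routine) obstacle, is uniformity across scales: the reach of the $C^5$ cone over $\{|x|\sim R\}$ is comparable to $R$, so one must read $U_\delta(C)$ as a conical neighbourhood $\{\text{dist}(x,C)<\delta|x|\}$ (or simply restrict to $|x|\ge R_0$) for the tubular coordinates $x=\rho\,p+t\,\nu_C(p)$ — and for the identity $\pi_C\big((\rho+\alpha)p+(t+\beta)\nu_C(p)\big)=(\rho+\alpha)p$ — to hold throughout, and for the bounding constant in the second paragraph to be genuinely scale-invariant. One should also record that all constants depend continuously on $\Gamma$ and $\psi$, as will be needed for the later genericity argument. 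Everything else follows at once from the homogeneity of $\tilde\psi$ and its constancy along the rays and the normal fibres of $C$.
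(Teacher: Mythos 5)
Your proof is correct, but it takes a genuinely different route from the paper. The paper works directly in cone-adapted Fermi coordinates $F(s,r,x') = r F_\Gamma(x') + s\,\nu_C(r F_\Gamma(x'))$, writes out the induced metric, computes the Christoffel symbols explicitly (in particular $\Gamma^k_{ij}=0$ for $i,j\in\{0,1\}$ and $k\geq 2$, $|\Gamma^k_{0j}|\leq c/r$, $\Gamma^k_{1j}=\tfrac1r\delta^k_j+O(s/r^2)$, $|\Gamma^k_{ij}|\leq cs/r^2$), and then uses $D^2\tilde\psi(\partial_iF,\partial_jF)=\partial^2_{ij}(\tilde\psi\circ F)-\Gamma^k_{ij}\partial_k(\tilde\psi\circ F)$ together with the observation that $\tilde\psi\circ F$ depends only on the $x'$-coordinates. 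You instead extract the decay $|D^l\tilde\psi|\leq c/r^l$ from the exact $0$-homogeneity $\tilde\psi(\lambda x)=\tilde\psi(x)$ (an immediate consequence of $\pi_C(\lambda x)=\lambda\pi_C(x)$ on a cone), and you get the vanishing of $D\tilde\psi(e_i)$ and $D^2\tilde\psi|_{\text{span}\{e_0,e_1\}}$ by noting that $\tilde\psi$ is \emph{literally constant} on the affine plane $x+\text{span}\{p,\nu_C(p)\}$, since $\nu_C$ is $0$-homogeneous along rays of the cone. Your approach is shorter and more conceptual, avoiding the Christoffel computation altogether; the paper's explicit coordinates buy the exact $\tfrac1r\delta^k_j$ leading term, which is more information than the lemma asserts and is not used downstream beyond the bounds. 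One small slip to fix: the affine $2$-plane $x+\text{span}\{e_0,e_1\}$ is not contained in ``a single normal fibre of $C$'' (a normal fibre of the cone is one-dimensional); what you mean, and what the calculation you write actually shows, is that $\pi_{\Gamma_0}$ is constant on this plane, i.e.\ the plane lies in a single fibre of the composite projection $\pi_{\Gamma_0}$. With that wording corrected, the argument is complete; your remark about reading $U_\delta(C)$ conically to get scale-uniform constants is the right precaution and is implicit in the paper as well, since the reach of the cone over $\{|x|\sim R\}$ grows like $R$.
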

  
\begin{proof}
We choose normal coordinates $F_\Gamma:\mathcal{U}_0\subset\R^{n-1}\rightarrow \Gamma$ of $\Gamma$ around $x_0':=x_0/|x_0|$ and locally parametrise $C(\Gamma)$ via the map $F_C:  (0,\infty) \times \mathcal{U}_0\mapsto C(\Gamma), (r,x') \mapsto r\cdot F_\Gamma(x')$. The induced metric has then the standard form
$$ g_C = dr^2 + r^2 g_\Gamma\, ,$$
where $g_\Gamma$ is the metric in normal coordinates on $\Gamma$. We now parametrise a neighborhood of $C(\Gamma)$ given by $F:(-\delta,\delta)\times \mathcal{U}_0\times (0,\infty) \rightarrow U_\delta(C)$ of $\Sigma_1$ via $(s,r,x') \mapsto F_C(r,x') + s \nu_C\big(F_C(r,x')\big)$.
 Therefore, by taking derivatives in the previous identity we obtain
 \begin{eqnarray*}
\partial_iF=\partial_iF_C-s h_i^j\partial_jF_C\, ,\quad i=1,\ldots,n\, .
\end{eqnarray*}
Thus we obtain for the induced metric $g:=F^*\delta_{\R^{n+1}}$:
\begin{eqnarray*}
g_{ij}=(g_C)_{ij}-2sh_{ij}+s^2h_i^kh_{kj}\quad i,j=1,\ldots,n \, ,
\end{eqnarray*}
and where we omit the $0$-direction since it is orthogonal. Note that $h_{1j}=h_{j1} \equiv 0 $ for $j =1,...,n$ and thus $g_{ij} = g_{ji} = \delta_{ij}$ for $i=0,1;\, j= 0,\ldots, n$. We recall that the Christoffel symbols are given by 
\begin{equation*}
\Gamma_{ij}^k=\frac{1}{2}g^{kl}\left(\partial_ig_{jl}+\partial_jg_{il}-\partial_lg_{ij}\right)\, .
\end{equation*}
We thus obtain 
\begin{equation}\label{eq:christoffel.1}
 \Gamma_{ij}^k \equiv 0\,, \quad i,j \in\{0,1\},\ k\geq 2\ .
\end{equation}
Furthermore, we have for $j,k\geq 2$ the estimates
\begin{equation}\label{eq:christoffel.2}
 |\Gamma_{0j}^k| = |\Gamma_{j0}^k| = \frac{1}{2}\Big| g^{kl}\partial_0 g_{jl}\Big| = \frac{1}{2}\Big|  -2 h_{j}^{\ k} + 2s h_j^{\ r} h_{r}^{\ k}\Big| \leq  \frac{c}{r}
\end{equation}
and
\begin{equation}\label{eq:christoffel.3}
\begin{split}
 \Gamma_{1j}^k  =  \Gamma_{j1}^k &= \frac{1}{2} g^{kl}\partial_1 g_{jl} \\
 &= \frac{1}{2} g^{kl}\bigg( \frac{2}{r} g_{jl} - 2s \partial_0 h_{jl} + s^2 \partial_0 h_i^{\ r}h_{rl} + s^2 h_{i}^{\ r}\partial_0 h_{rl}\bigg)\\
 &= \frac{1}{2} \delta_j^{\ k} - s \partial_0 h_{j}^{\ k} + \frac{2s}{r} h_j^{\ k} + \frac{1}{2} s^2 \partial_0 h_i^{\ r}h_{rl} + \frac{s^2}{2} h_{i}^{\ r}\partial_0 h_{r}^{\ k}\\
 &\ \ \  - \frac{s^2}{r} h_{i}^{\ r}h_{r}^{\ k}\\
 & = \frac{1}{r} \delta_j^{\ k} + O\Big(\frac{s}{r^2}\Big)\ .
 \end{split}
 \end{equation}
A similar argument gives for $i,j,k \geq 2$ that at $(s,r,x_0')$
$$ |\Gamma_{ij}^k | \leq c \frac{s}{r^2}\ . $$
Recall the formula 
$$ D^2 \tilde{\psi} (x) (\partial_i F, \partial_j F) = \partial^2_{ij} (\tilde{\psi}\circ F) - \Gamma_{ij}^k\partial_{k} (\tilde{\psi}\circ F) $$
and note that $\tilde{\psi}\circ F = \psi \circ F_\Gamma$, i.e. it does not depend on the first two coordinates. The above estimates on the Christoffel symbols then imply the stated estimates on $D^2\tilde{\psi}$.
\end{proof}
The following lemma estimates the convergence rate at infinity of the difference of two Jacobi functions associated to two asymptotically conical expanders coming out of the same cone and which comes from the same variation of the cone.

 \begin{lemma}\label{lem: second jacobi estimate}
 Let $(\Sigma_i)_{i=0,1}$ be two expanders asymptotic to the same cone $C(\Gamma)$. Let $(v_i)_{i=0,1}$ be two Jacobi functions associated to $(\Sigma_i)_{i=0,1}$ induced by the same variation of the cone as in Lemma \ref{lem: first jacobi estimate}, i.e. 
 \begin{eqnarray}
\Delta_{\Sigma_i}v_i+\frac{1}{2}\langle x_i,\nabla^{\Sigma_i}v_i\rangle+ |A_{\Sigma_i}|^2v_i-\frac{1}{2}v_i=0,\quad i=0,1.\label{Jac-equ-fct}
\end{eqnarray}
Then, writing $\Sigma_1$ as a normal graph over $\Sigma_0$ via $F(p):= p + u(p) \nu_{\Sigma_0}(p)$, and setting $\tilde{v}_1 = v_1\circ F$,  the difference $\tilde{v}_1 -v_0$ satisfies the following decay: 
\begin{eqnarray}
\tilde{v}_1 -v_0=\textit{O}\left(r^{-n-1+\varepsilon}e^{-\frac{r^2}{4}}\right),\label{diff-jac-fiel-est}
\end{eqnarray}
for every positive $\varepsilon$.
 \end{lemma}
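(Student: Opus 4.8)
The plan is to derive an elliptic equation for the difference $w:=\tilde v_1-v_0$ on $\bar E_{0,R_0}$, to show that it is an approximate Jacobi field vanishing at infinity in the precise sense of Theorem~\ref{theo-ptwise-est}(2) with inhomogeneity $Q=\textit{O}(r^{-n-1}e^{-r^2/4})$, and then to quote the second bullet of that theorem. Since $\Sigma_1=\text{graph}_{\bar E_{0,R_0}}(u)$ with $u=\textit{O}(r^{-n-1}e^{-r^2/4})$ and all rescaled derivatives of $\hat u=r^{n+1}e^{r^2/4}u$ bounded by Theorem~\ref{theo-ptwise-est}(1), the functions $u$, $\nabla_{\Sigma_0}u$, $\nabla^2_{\Sigma_0}u$ decay exponentially; a short computation shows moreover that the dominant part of $\nabla^2_{\Sigma_0}u$ is radial, i.e.\ proportional to $dr\otimes dr$ with coefficient $\textit{O}(r^{-n+1}e^{-r^2/4})$, while its component transverse to $\partial_r$ is only of size $\textit{O}(r^{-n-1}e^{-r^2/4})$. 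Together with \eqref{eq.0} and \eqref{eq.0.1}, these are exactly what will keep the error terms below the threshold $r^{-n-1}e^{-r^2/4}$.

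As a first step I would record polynomial bounds on the Jacobi fields themselves. Applying Lemma~\ref{lem: first jacobi estimate} to each of $\Sigma_0,\Sigma_1$ gives $v_i=r\,\tilde\psi+w_i$ with the \emph{same} $\tilde\psi=\psi\circ\pi_\Gamma$ and $|\nabla^l_{\Sigma_i}w_i|\le c\,r^{-1-l}$ for $l=0,1$; combining the estimates $|D^l\tilde\psi|\le c\,r^{-l}$ of Lemma~\ref{lem: hessian estimate} with interior Schauder estimates for the remainder (using \eqref{Jac-equ-fct}) yields $|v_i|\le c\,r$, $|\nabla_{\Sigma_i}v_i|\le c$ and $|\nabla^2_{\Sigma_i}v_i|\le c\,r$. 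These transfer to $\tilde v_1=v_1\circ F$ because $F(p)=p+u(p)\nu_{\Sigma_0}(p)$ has $DF=\mathrm{Id}+\textit{O}(r^{-n}e^{-r^2/4})$. In particular $w=\tilde v_1-v_0\to0$ as $r\to\infty$, since $(r\tilde\psi)\circ F-r\tilde\psi=D(r\tilde\psi)\cdot u\,\nu_{\Sigma_0}+\textit{O}(|u|^2)=\textit{O}(|u|)$ and $w_i=\textit{O}(r^{-1})$.

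The core step is the equation for $w$. Pulling back $\Delta_{\Sigma_1}v_1+\tfrac12\langle x_1,\nabla^{\Sigma_1}v_1\rangle+(|A_{\Sigma_1}|^2-\tfrac12)v_1=0$ by $F$ and linearising, via Appendix~\ref{app:NG}, the induced metric $F^*g_{\Sigma_1}=g_{\Sigma_0}-2uA_{\Sigma_0}+\textit{O}(u^2|A_{\Sigma_0}|^2)+du\otimes du$, the position vector $x_1\circ F=x+u\,\nu_{\Sigma_0}$, and the second fundamental form $A_{\Sigma_1}\circ F=A_{\Sigma_0}-\nabla^2_{\Sigma_0}u+\textit{O}(|A_{\Sigma_0}|^2|u|)+\textit{O}(|\nabla_{\Sigma_0}u|^2)$, one rewrites the pulled--back operator as $L_0+\mathcal E$, where $L_0$ is the Jacobi operator of $\Sigma_0$. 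Since $L_0 v_0=0$ by \eqref{Jac-equ-fct} and the pulled--back equation reads $(L_0+\mathcal E)\tilde v_1=0$, we get $L_0 w=-\mathcal E\tilde v_1$. Now the second--order part of $\mathcal E$ has coefficients $\textit{O}(|u||A_{\Sigma_0}|)=\textit{O}(r^{-n-2}e^{-r^2/4})$, its first--order part (including the contribution of the drift term) coefficients $\textit{O}(r|u||A_{\Sigma_0}|+|\nabla_{\Sigma_0}u|\,|A_{\Sigma_0}|+|u||\nabla_{\Sigma_0}A_{\Sigma_0}|)=\textit{O}(r^{-n-1}e^{-r^2/4})$, and its zeroth--order part is $|A_{\Sigma_1}|^2\circ F-|A_{\Sigma_0}|^2=-2\langle A_{\Sigma_0},\nabla^2_{\Sigma_0}u\rangle+\textit{O}(|A_{\Sigma_0}|^3|u|)+\textit{O}(|\nabla^2_{\Sigma_0}u|^2)$. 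The delicate term is $\langle A_{\Sigma_0},\nabla^2_{\Sigma_0}u\rangle$: using that \eqref{eq.0.1} together with \eqref{eq.0} forces $A_{\Sigma_0}(\partial_r,\cdot)=\textit{O}(r^{-3})$, while the transverse part of $A_{\Sigma_0}$ only meets the transverse part of $\nabla^2_{\Sigma_0}u$, one finds $\langle A_{\Sigma_0},\nabla^2_{\Sigma_0}u\rangle=\textit{O}(r^{-3})\,\textit{O}(r^{-n+1}e^{-r^2/4})+\textit{O}(r^{-1})\,\textit{O}(r^{-n-1}e^{-r^2/4})=\textit{O}(r^{-n-2}e^{-r^2/4})$. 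Multiplying each coefficient of $\mathcal E$ by $\tilde v_1=\textit{O}(r)$, $\nabla_{\Sigma_0}\tilde v_1=\textit{O}(1)$, $\nabla^2_{\Sigma_0}\tilde v_1=\textit{O}(r)$ as appropriate gives $\mathcal E\tilde v_1=\textit{O}(r^{-n-1}e^{-r^2/4})$. Hence $w$ satisfies \eqref{app-jac-fiel} with $V_1=-|A_{\Sigma_0}|^2=\textit{O}(r^{-2})$ and $Q=-\mathcal E\tilde v_1=\textit{O}(r^{-n-1}e^{-r^2/4})$, and together with $w\to0$ the second bullet of Theorem~\ref{theo-ptwise-est}(2) gives \eqref{diff-jac-fiel-est}.

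I expect the main obstacle to be precisely this zeroth--order error term: with only the crude bounds $\nabla^2_{\Sigma_0}u=\textit{O}(r^{-n+1}e^{-r^2/4})$ and $\tilde v_1=\textit{O}(r)$ one would get $\textit{O}(r^{-n+2}e^{-r^2/4})$, which overshoots the admissible inhomogeneity by two powers of $r$. Recovering the bound forces one to exploit two separate features of the asymptotically conical geometry --- the radial concentration of $\nabla^2_{\Sigma_0}u$ and the improved radial decay of $A_{\Sigma_0}$ from \eqref{eq.0.1} --- and to carry out the full (not merely quadratic) linearisation of both the graph geometry of $\Sigma_1$ over $\Sigma_0$ and of the Jacobi operator; one may equivalently phrase this as linearising $v_1$ directly about $v_0$. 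Bounding the $\textit{O}(r)$--growth of $\tilde v_1$ through the splitting $v_i=r\tilde\psi+w_i$ and Lemma~\ref{lem: hessian estimate} is the other point where the conical structure enters in an essential way.
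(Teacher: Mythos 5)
Your plan is correct and follows essentially the same route as the paper's proof: write $w=\tilde v_1-v_0$, linearize the Jacobi operator of $\Sigma_1$ over $\Sigma_0$, verify that the inhomogeneity is $\textit{O}(r^{-n-1}e^{-r^2/4})$ using the two key geometric observations (the radial concentration of $\nabla^2_{\Sigma_0}u$ and the improved radial decay $A_{\Sigma_0}(x^\top,\cdot)=\textit{O}(r^{-2})$ from \eqref{eq.0.1}), and invoke the second bullet of Theorem~\ref{theo-ptwise-est}(2). The only place your bookkeeping differs from the paper's, and where your justification is slightly loose, is the second-derivative bound on $v_i$: citing "interior Schauder estimates" does not cleanly yield $|\nabla^2_{\Sigma_i}v_i|\le cr$ because the drift coefficient $\tfrac{1}{2}x$ has unbounded norm $\textit{O}(r)$ and inflates the Schauder constant; the paper avoids this by keeping the split $v_i=\alpha+w_i$ all the way through, converting $\Delta_{\Sigma_i}\alpha+\tfrac12\langle x_i,\nabla^{\Sigma_i}\alpha\rangle$ to the ambient form $\Delta_{\R^{n+1}}\alpha-D^2\alpha(\nu_i,\nu_i)+\tfrac12\langle x_i,D\alpha\rangle$ so that Lemma~\ref{lem: hessian estimate} bounds the $\alpha$-terms explicitly, while the interpolation estimates recorded in the proof of Lemma~\ref{lem: first jacobi estimate} give $|\nabla^2_{\Sigma_i}w_i|=\textit{O}(r^{-3+\delta})$; together these actually give the sharper $|\nabla^2_{\Sigma_i}v_i|=\textit{O}(r^{-1})$, which makes most of your borderline estimates comfortably subcritical rather than exactly at the threshold.
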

 
 \begin{remark}
 The estimate (\ref{diff-jac-fiel-est}) of Lemma \ref{lem: second jacobi estimate} can be made more precise. Indeed, the proof of Lemma \ref{lem: second jacobi estimate} shows that the difference $\tilde{v}_1 -v_0$ is $\textit{O}\Big(r^{-n-1}e^{-\frac{r^2}{4}}\Big)$ if and only if the mean curvature $H_{\Sigma_0}$ is radially independent at infinity, i.e.~if the mean curvature of the asymptotic cone $C_0$ is radially independent, which forces the asymptotic cone $C_0$ to be minimal. However, the estimate (\ref{diff-jac-fiel-est}) is sufficient for our purpose.
 \end{remark}
 \begin{proof}
 The main difficulty in estimating the difference $v_1-v_0$ of these two Jacobi functions lies in the fact that they are asymptotically $1$-homogeneous in a sense we recall now. By Lemma \ref{lem: first jacobi estimate} we have
  \begin{eqnarray*}
v_i=r \cdot \tilde{\psi} +w_i=:\alpha+w_i,\quad \text{$i=0,1$ on $\Sigma_i$,}
\end{eqnarray*}
where the functions $w_i$ are asymptotically $(-1)$-homogeneous, i.e. $\nabla^l_{\Sigma_i}w_i=\textit{O}(r_i^{-1-l})$ for $l=0,1$. By (\ref{Jac-equ-fct}),
\begin{eqnarray}
&&\Delta_{\Sigma_i}w_i+\frac{1}{2}\langle x_i,\nabla^{\Sigma_i}w_i\rangle+\left(|A_{\Sigma_i}|^2-\frac{1}{2}\right)w_i=\\
&&-\Delta_{\Sigma_i}\alpha-\frac{1}{2}\langle x_i,\nabla^{\Sigma_i}\alpha\rangle-\left(|A_{\Sigma_i}|^2-\frac{1}{2}\right)\alpha.\label{est-wi-fcts}
\end{eqnarray}
Now, recall that for an ambient function $g:\R^{n+1}\rightarrow\R$, the intrinsic Laplacian with respect to the expander $\Sigma_i$, $i=0,1$ and the extrinsic Laplacian are related by:
\begin{eqnarray*}
\Delta_{\Sigma_i}g&=&\Delta_{\R^{n+1}}g+\langle \bfH_i,Dg\rangle-D^2g(\nu_{i},\nu_{i})\\
&=&-\frac{1}{2}\langle x_i,\nabla^{\Sigma_i}g\rangle+\Delta_{\R^{n+1}}g-D^2g(\nu_{i},\nu_{i})+\frac{1}{2}\langle x_i,Dg\rangle.
\end{eqnarray*}
By this observation together with (\ref{est-wi-fcts}), one gets:
\begin{equation*}\begin{split}
\Delta_{\Sigma_i}w_i+&\frac{1}{2}\langle x_i,\nabla^{\Sigma_i}w_i\rangle+\left(|h_{\Sigma_i}|^2-\frac{1}{2}\right)w_i\\
&= -\Delta_{\R^{n+1}}\alpha+D^2\alpha(\nu_i,\nu_i)-\frac{1}{2}\langle x_i,D\alpha \rangle-\left(|A_{\Sigma_i}|^2-\frac{1}{2}\right)\alpha\\
&= -\Delta_{\R^{n+1}}\alpha+D^2\alpha(\nu_i,\nu_i)-|A_{\Sigma_i}|^2\alpha.
\end{split}
\end{equation*}
We are now in a good position to linearize the Jacobi operator $$\Delta_{\Sigma_1} +\frac{1}{2}\langle x_1,\nabla^{\Sigma_1}\cdot\rangle+\left(|A_{\Sigma_1}|^2-\frac{1}{2}\right),$$ with respect to the (background) expander $\Sigma_0$.

To do so, we employ the computations in Appendix \ref{app:NG}. Using the notation there, let $x_0 \in \Sigma_0\setminus B_{R_0}$ where $R_0$ is a sufficiently large radius and choose a local parametrization of $\Sigma_0$ in a neighborhood of $x_0$, $F:\mathcal{U}_0\subset\R^n\rightarrow \Sigma_0$ such that 
\begin{eqnarray*}
F(0)=x_0,\quad \langle\partial_iF(0),\partial_jF(0)\rangle=\delta_{ij},\quad\partial_{ij}^2F(0)=h(\partial_iF(0),\partial_jF(0))\nu(x_0),
\end{eqnarray*}
such that the second fundamental form is diagonal. Note that this implies that the Christoffel symbols if the induced metric vanish at zero.

 In particular, this implies there exists a local parametrization $F_1:\mathcal{U}_0\rightarrow \Sigma_1$ of $\Sigma_1$ in a neighborhood of $y_0=x_0+u(x_0)\nu(x_0)$ such that $$F_1=F_0+u\nu.$$
 Therefore, by taking derivatives in the previous identity:
 \begin{eqnarray*}
\partial_iF_1=\partial_iF_0+\partial_iu\nu-uh_i^j\partial_jF_0,\quad i=1,\ldots,n.
\end{eqnarray*}

Recall that the induced metric on $\Sigma_1$ is given by $F_1^*g_{\R^{n+1}}$:
\begin{eqnarray*}
(g_1)_{ij}=(g_0)_{ij}-2uh_{ij}+\partial_iu\partial_ju+u^2h_i^kh_{kj},
\end{eqnarray*}
since $\langle\nu,\partial_iF_0\rangle=0$.
 Consequently, one has the following schematic estimate for the difference of the two metrics $g_1-g_0$:
 \begin{eqnarray}
(g_1)_{ij}-(g_0)_{ij}=-2uh_{ij}+\textit{O}\left(|\nabla^{\Sigma_0} u|^2+|u|^2r^{-2}\right).\label{est-diff-metrics}
\end{eqnarray}
It remains to estimate the difference of the Christoffel symbols $(\Gamma_1)_{ij}^k-(\Gamma_0)_{ij}^k$, with obvious notations. By definition:
\begin{eqnarray*}
(\Gamma_1)_{ij}^k&=&\frac{1}{2}g_1^{kl}\left(\partial_i(g_1)_{jl}+\partial_j(g_1)_{il}-\partial_l(g_1)_{ij}\right),
\end{eqnarray*}
and:
\begin{eqnarray}
\partial_k(g_1)_{ij}&=&\partial_k(g_0)_{ij}-2\partial_kuh_{ij}-2u\partial_kh_{ij}\label{est-1-partial-der}\\
&&+\textit{O}\left(|u|^2r^{-3}+|u||\nabla^{\Sigma_0}u|r^{-2}+|\nabla^{\Sigma_0,2}u||\nabla^{\Sigma_0}u|\right).\label{est-2-partial-der}
\end{eqnarray}
Therefore, after some tedious computations, one arrives at:
\begin{eqnarray*}
(\Gamma_1)_{ij}^k&=&-u\partial_ih^k_j-u\partial_jh^k_i+u\partial_kh_{ij}-\partial_iuh^k_j-\partial_juh^k_i+\partial_kuh_{ij}\\
&&+\textit{O}\left(|u|^2r^{-3}+|u||\nabla^{\Sigma_0}u|r^{-2}+|\nabla^{\Sigma_0,2}u||\nabla^{\Sigma_0}u|\right).\label{est-chris-symb}
\end{eqnarray*}
By using the definition of the Laplacian acting on functions:
\begin{eqnarray*}
\Delta_{\Sigma_1}w_1&=&g_1^{ij}\left(\partial^2_{ij}w_1-(\Gamma_1)_{ij}^k\partial_kw_1\right)\\
&=&\Delta_{\Sigma_0}w_1+2A_{\Sigma_0}(\nabla^{\Sigma_0}u,\nabla^{\Sigma_0}w_1)+u\langle\nabla^{\Sigma_0}H_{\Sigma_0},\nabla^{\Sigma_0}w_1\rangle\\
&&-H_{\Sigma_0}\langle\nabla^{\Sigma_0}u,\nabla^{\Sigma_0}w_1\rangle+2u\langle A_{\Sigma_0},\partial^2w_1\rangle\\
&&+\textit{O}\left(|u|^2r^{-3}+|u||\nabla^{\Sigma_0}u|r^{-2}+|\nabla^{\Sigma_0,2}u||\nabla^{\Sigma_0}u|\right)\\
&=&\Delta_{\Sigma_0}w_1+2A_{\Sigma_0}(\nabla^{\Sigma_0}u,\nabla^{\Sigma_0}w_1)+u\langle\nabla^{\Sigma_0}H_{\Sigma_0},\nabla^{\Sigma_0}w_1\rangle\\
&&-H_{\Sigma_0}\langle\nabla^{\Sigma_0}u,\nabla^{\Sigma_0}w_1\rangle+\textit{O}(|u|r^{-4})\\
&&+\textit{O}\left(|u|^2r^{-3}+|u||\nabla^{\Sigma_0}u|r^{-2}+|\nabla^{\Sigma_0,2}u||\nabla^{\Sigma_0}u|\right),
\end{eqnarray*}
where in the penultimate line, we used $\partial^2w_1=\textit{O}(r^{-3})$. Now, let us estimate the linear terms in $u$ more carefully by considering $\hat{u}$ instead of $u$:
\begin{eqnarray*}
A_{\Sigma_0}(\nabla^{\Sigma_0}u,\nabla^{\Sigma_0}w_1)&=&A_{\Sigma_0}(\nabla^{\Sigma_0}\hat{u},\nabla^{\Sigma_0}w_1)r^{-n-1}e^{-\frac{r^2}{4}}\\
&&-\left(\frac{r}{2}+\frac{n+1}{r}\right)h(\textbf{n},\nabla^{\Sigma_0}w_1)u,
\end{eqnarray*}
where $$\textbf{n}:=\frac{x^{\top}}{|x^{\top}|}.$$
Using \eqref{eq.0.1} we obtain
\begin{eqnarray*}
|A_{\Sigma_0}(\nabla^{\Sigma_0}u,\nabla^{\Sigma_0}w_1)|&\leq&\frac{c}{r^3}|\nabla^{\Sigma_0}\hat{u}|r^{-n-1}e^{-\frac{r^2}{4}}+c\frac{|u|}{r^3},
\end{eqnarray*}
for some positive constant $c$ uniform in $r\geq R_0$. One gets a similar estimate on the term $u\langle\nabla^{\Sigma_0}H_{\Sigma_0},\nabla^{\Sigma_0}w_1\rangle$. All in all, one arrives at:
\begin{eqnarray}
&&|\Delta_{\Sigma_1}w_1-\Delta_{\Sigma_0}w_1|\leq\label{lin-lap-1}
\\
&& \textit{O}\left(|u|r^{-3}+|\nabla^{\Sigma_0}\hat{u}|r^{-n-4}e^{-\frac{r^2}{4}}+|\nabla^{\Sigma_0,2}u||\nabla^{\Sigma_0}u|\right). \label{lin-lap-2}
\end{eqnarray}
Let us now linearize the drift term as follows:
\begin{eqnarray*}
\frac{1}{2}\langle x_1,\nabla^{\Sigma_1}w_1\rangle&=&\frac{1}{2}\langle x_0+u\nu,(g_1)^{ij}\partial_iw_1\partial_jF_1\rangle.
\end{eqnarray*}
Now,
\begin{eqnarray*}
(g_1)^{ij}\partial_iw_1\partial_jF_1&=&\left((g_0)^{ij}+2uh_{ij}\right)\partial_iw_1\left(\partial_jF_0+\partial_ju\nu-uh_j^k\partial_kF_0\right)\\
&&+\textit{O}\left(|u|^2r^{-4}+|\nabla^{\Sigma_0}u|^2r^{-2}\right),
\end{eqnarray*}
where we used the quadratic decay of $\partial_iw_1$ in the last line.
Consequently,
\begin{eqnarray*}
\frac{1}{2}\langle x_1,\nabla^{\Sigma_1}w_1\rangle&=&\frac{1}{2}\langle x_0,\nabla^{\Sigma_0}w_1\rangle+\frac{1}{2}\langle\nabla^{\Sigma_0}u,\nabla^{\Sigma_0}w_1\rangle\langle x_0,\nu\rangle\\
&&+\frac{1}{2}uA_{\Sigma_0}(\nabla^{\Sigma_0}w_1,x_0^{\top})\\
&&+\textit{O}(|u||\nabla^{\Sigma_0}u|r^{-4}+|u|^2r^{-3}+|\nabla^{\Sigma_0}u|^2r^{-1}),
\end{eqnarray*}
which implies by using (\ref{eq.0.1}):
\begin{eqnarray}
\left|\frac{1}{2}\langle x_1,\nabla^{\Sigma_1}w_1\rangle-\frac{1}{2}\langle x_0,\nabla^{\Sigma_0}w_1\rangle\right|\leq \textit{O}\left(|u|r^{-4}+ |\nabla^{\Sigma_0}u|r^{-3}\right).\label{lin-lap-3}
\end{eqnarray}
We summarize this discussion by adding (\ref{lin-lap-1}), (\ref{lin-lap-2}) and (\ref{lin-lap-3}) to get:
\begin{equation*}
\begin{split}
\Delta_{\Sigma_1}w_1+\frac{1}{2}\langle x_1,\nabla^{\Sigma_1}w_1\rangle=&\ \Delta_{\Sigma_0}w_1+\frac{1}{2}\langle x_0,\nabla^{\Sigma_0}w_1\rangle\\
&+\textit{O}\left(|u|r^{-3}+|\nabla^{\Sigma_0}u|r^{-3}+|\nabla^{\Sigma_0}\hat{u}|r^{-n-4}e^{-\frac{r^2}{4}}\right)\\
=&\ \Delta_{\Sigma_0}w_1+\frac{1}{2}\langle x_0,\nabla^{\Sigma_0}w_1\rangle\\
&+\textit{O}\left(|u|r^{-2}+|\nabla^{\Sigma_0}\hat{u}|r^{-n-4}e^{-\frac{r^2}{4}}\right)\\
\end{split}
\end{equation*}
where we used the fact that $\nabla^{\Sigma_0,2}u=\textit{O}(r^{-3})$.

Going back to (\ref{est-wi-fcts}), the difference $v:=v_1-v_0$ satisfies:
\begin{equation}\label{eq: first diff}\begin{split}
\Delta_{\Sigma_0}v+\frac{1}{2}\langle x_0,\nabla^{\Sigma_0}v\rangle+&\left(|A_{\Sigma_0}|^2-\frac{1}{2}\right)v\bigg|_{x=x_0}\\
&=\Delta_{\R^{n+1}}  \alpha (x_0) - \Delta_{\R^{n+1}}  \alpha (y_0)\\ &\ \  + D^2\alpha(\nu_1,\nu_1)(y_0) -D^2\alpha(\nu_0,\nu_0)(x_0))\\
&\ \ -\left(|A_{\Sigma_1}|^2-|A_{\Sigma_0}|^2\right)\alpha\\
&\ \ +\textit{O}\left(|u|r^{-2}+|\nabla^{\Sigma_0}\hat{u}|r^{-n-4}e^{-\frac{r^2}{4}}\right).
\end{split}
\end{equation}
To estimate the terms on the right hand side, it is easy to check that
$$ |D^l\alpha| \leq \frac{c}{r^{l-1}}\ .$$
This implies that
$$  \Delta_{\R^{n+1}}  \alpha (x_0) - \Delta_{\R^{n+1}}  \alpha (y_0) = O(r^{-2} |u|)\, .$$
We now again use that
$$ \nu_1(y_0) = \nu_0(x_0)  - \nabla_{\Sigma_0}u (x_0) + Q_0(x_0,u,\nabla u) $$
where $|Q_0(p,u,Du)| \leq C(r^{-1}|u| + |\nabla u|^2)$. We first note that
$$ D^2 \alpha = \tilde{\psi} D^2 r + Dr \otimes D\tilde{\psi} + D\tilde{\psi}\otimes Dr + r D^2 \tilde{\psi}\ .$$
Combining this with the estimates in Lemma \ref{lem: hessian estimate} we can estimate after a longer calculation that
\begin{equation*}
\begin{split}
D^2\alpha(\nu_1,\nu_1)(y_0) -D^2\alpha(\nu_0,\nu_0)(x_0) &=  O\big(r^{-2} |u| + r^{-1} \big|(\nabla u)^{\perp_r}\big| + r^{-3} |\nabla u|\big)\ ,
\end{split}
\end{equation*}
where $\perp_r$ is the orthogonal projection on the non-radial directions. Combining this with \eqref{eq: first diff} we see that
\begin{equation}
\begin{split}
\Delta_{\Sigma_0}v+&\frac{1}{2}\langle x,\nabla^{\Sigma_0}v\rangle +\left(|A_{\Sigma_0}|^2-\frac{1}{2}\right)v \\
&= \textit{O}\bigg(r^{-2}|u| +  r^{-1} \big|(\nabla u)^{\perp_r}\big| + r^{-3} |\nabla u|+ |\nabla^{\Sigma_0}\hat{u}|r^{-n-4}e^{-\frac{r^2}{4}}\bigg)\\
&\ \ \ -\left(|A_{\Sigma_1}|^2-|A_{\Sigma_0}|^2\right)\alpha\\
&=\textit{O}\left(r^{-n-3}e^{-\frac{r^2}{4}}\right)-\left(|A_{\Sigma_1}|^2-|A_{\Sigma_0}|^2\right)\alpha\,,
\end{split}
\end{equation}
where in the last line we used estimates on $u$ and the gradient of $\hat{u}$ from Theorem \ref{theo-ptwise-est}: notice the crucial presence of the orthogonal projection of $\nabla^{\Sigma_0}u$ perpendicular to the radial direction.

It remains to estimate the difference of the squared norms of the respective second fundamental forms $A_{\Sigma_i}$ for $i=0,1$. Again, we use extensively Appendix \ref{app:NG} to estimate this difference: our goal is to show that it decays as fast as $r^{-n-2}e^{-r^2/4}$ in order to apply the second part of Theorem \ref{theo-ptwise-est} with $Q=\textit{O}\Big(r^{-n-1}e^{-\frac{r^2}{4}}\Big)$ since $\alpha$ grows linearly.

 Thanks to (\ref{eq:ng.4}), one can write:
\begin{equation*}
\begin{split}
|A_{\Sigma_1}|^2-|A_{\Sigma_0}|^2=&\ 2\langle A_{\Sigma_0}\otimes A_{\Sigma_0},A_{\Sigma_0}\rangle u+2\langle\nabla^{\Sigma_0,2}u,A_{\Sigma_0}\rangle\\
&+Q(x,u,\nabla^{\Sigma_0} u , \nabla^{\Sigma_0,2} u),
\end{split}
\end{equation*}
where $Q$ is quadratic in $u$ and its derivatives. Now,
\begin{equation*}
\begin{split}
\langle A_{\Sigma_0}\otimes A_{\Sigma_0},A_{\Sigma_0}\rangle u&=\textit{O}\Big(r^{-n-4}e^{-\frac{r^2}{4}}\Big),\\
\langle\nabla^{\Sigma_0,2}u,A_{\Sigma_0}\rangle&=\Big\langle\nabla^{\Sigma_0,2}\Big(r^{-n-1}e^{-\frac{r^2}{4}}\hat{u}\Big),A_{\Sigma_0}\Big\rangle\\
&=r^{-n-1}e^{-\frac{r^2}{4}}\Big\langle\nabla^{\Sigma_0,2}\hat{u},A_{\Sigma_0}\Big\rangle\\
&\ \ \ -\left(r+\textit{O}(r^{-1})\right)r^{-n-1}e^{-\frac{r^2}{4}} A_{\Sigma_0}(\textbf{n},\nabla^{\Sigma_0}\hat{u})\\
&\ \ \ +\left(\frac{r^2}{4}+\textit{O}(1)\right)r^{-n-1}e^{-\frac{r^2}{4}}\cdot\hat{u}\cdot A_{\Sigma_0}(\textbf{n},\textbf{n}).
\end{split}
\end{equation*}
According to Theorem \ref{theo-ptwise-est}, $\nabla^{\Sigma_0,i}\hat{u}=\textit{O}(r^{-i})$, $i=0,1,2$, and together with (\ref{eq.0.1}), one gets:
\begin{equation*}
\begin{split}
&\langle\nabla^{\Sigma_0,2}u,A_{\Sigma_0}\rangle=\textit{O}\Big(r^{-n-4}e^{-\frac{r^2}{4}}\Big)+\textit{O}\Big(r^{-n-1}e^{-\frac{r^2}{4}}\Big)\cdot\hat{u}\cdot \left( r^2A_{\Sigma_0}(\textbf{n},\textbf{n})\right).
\end{split}
\end{equation*}
In general, we know that $r^3A_{\Sigma_0}(\textbf{n},\textbf{n})=\textit{O}(1)$ by (\ref{eq.0.1}). Notice that if $$\lim_{r\rightarrow +\infty}r^3A_{\Sigma_0}(\textbf{n},\textbf{n})=0,$$ then $H_{\Sigma_0}$ is asymptotically radially independent by (\ref{eq.0.1}), i.e. $H_{C_0}$ vanishes identically on the asymptotic cone $C_0$. 

Since $v$ satisfies the assumption \eqref{app-jac-fiel} with $Q=\textit{O}\Big(r^{-n-1}e^{-\frac{r^2}{4}}\Big)$, we know that $v=\textit{O}\Big(r^{-n-1+\varepsilon}e^{-\frac{r^2}{4}}\Big)$ by invoking the second part of Theorem \ref{theo-ptwise-est} for every positive $\varepsilon$.
\end{proof} 

\section{Renormalization of the expander entropy}\label{ren-exp-ent-section}
For $R \geq R_1$  we now define the approximate relative entropy 
 $$\calE_{\Sigma_0,\Sigma_1}(R):= \int_{\Sigma_1 \cap B_R(0)} e^\frac{r^2}{4}\, d\calH^n - \int_{\Sigma_0 \cap B_R(0)} e^\frac{r^2}{4}\, d\calH^n\ .$$
We aim to show that $\calE_{\Sigma_0,\Sigma_1}(R)$ converges as $R \rightarrow \infty$. It turns out that using cut-off functions is more convenient to control the asymptotic behavior of the integrals involved in the definition of this relative entropy, as the following proposition demonstrates.

\begin{proposition}\label{thm:1.3} For $R_0<R_1<R_2 -2$ and $0<\delta<1$, let $\varphi: [0,\infty) \ra [0,1]$  be a smooth cut-off function, compactly supported in $(R_1, R_2)$, such that 
\begin{eqnarray*}
\text{$\varphi \equiv 1$ on $[R_1+\delta, R_2 -\delta]$, $|\varphi'| \leq (1+\delta)\delta^{-1}$ and $|\varphi''| \leq (1+\delta) \delta^{-2}$.}
\end{eqnarray*}
 Then
\begin{equation}
 \left|\int_{\Sigma_1} \varphi(r)\, e^\frac{r^2}{4}\, d\calH^n - \int_{\Sigma_0} \varphi(r)\, e^\frac{r^2}{4}\, d\calH^n\right|  \leq C R_1^{-3}\ ,
\end{equation}
where $C$ is independent of $\delta$.
\end{proposition}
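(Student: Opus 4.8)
The plan is to pull the integral over $\Sigma_1$ back to $\Sigma_0$ along the graph, to isolate the term linear in the height function $u$, and to observe that it vanishes because $\Sigma_0$ is a critical point of the expander entropy; what remains is then either controlled by the super-exponential decay of $u$, or is a transition term localised near $\mS_{R_1}$ and $\mS_{R_2}$.

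First, since $R_1>R_0$ and, by \eqref{equ-exp-MCF}, along $\Sigma_0$
\[
 r_1^2:=|x+u\,\nu_{\Sigma_0}|^2=r^2+2u\,\langle x,\nu_{\Sigma_0}\rangle+u^2=r^2+4H_{\Sigma_0}u+u^2 ,
\]
with $|u|=\textit{O}(r^{-n-1}e^{-r^2/4})$, $|\nabla^{\Sigma_0}u|=\textit{O}(r^{-n}e^{-r^2/4})$ (Theorem \ref{theo-ptwise-est}, Corollary \ref{coro-def-trace-inf}) and $|A_{\Sigma_0}|=\textit{O}(r^{-1})$, one gets $|r_1-r|=\textit{O}(r^{-n-3}e^{-r^2/4})$; in particular the part of $\Sigma_1$ lying over $\mathrm{supp}\,\varphi\subset\{r>R_1\}$ is contained in $\bar E_{1,R_0}=\mathrm{graph}_{\bar E_{0,R_0}}(u)$, the collar discrepancy with $\Sigma_{1,R_0}$ sitting at $r\approx R_0<R_1$. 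Setting $\Phi(x)=x+u(x)\nu_{\Sigma_0}(x)$ and using Appendix \ref{app:NG}, I would change variables,
\[
 \int_{\Sigma_1}\varphi(r)\,e^{r^2/4}\,d\calH^n=\int_{\Sigma_0}\varphi(r_1)\,e^{r_1^2/4}\,J\,d\calH^n ,
\]
where $J=J(u,\nabla^{\Sigma_0}u)$ is the Jacobian of $\Phi$. The key structural facts are: $r_1$ depends on $u$ alone; $J$ is smooth in $(u,\nabla^{\Sigma_0}u)$ with $J(0,0)=1$, $\partial_uJ(0,0)=-H_{\Sigma_0}$ (first variation of area) and $\partial_{\nabla u}J(0,0)=0$ (the gradient enters $J$ quadratically); and all second derivatives of $(u,\nabla^{\Sigma_0}u)\mapsto e^{(r_1^2-r^2)/4}J$ are bounded uniformly for $r\ge R_0$ (using $|A_{\Sigma_0}|+|\langle x,\nu_{\Sigma_0}\rangle|=\textit{O}(r^{-1})$), while $|u|+|\nabla^{\Sigma_0}u|$ is as small as we wish for $R_0$ large.

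Next split $\int_{\Sigma_1}\varphi e^{r^2/4}-\int_{\Sigma_0}\varphi e^{r^2/4}=I+II$, with $I=\int_{\Sigma_0}\varphi(r)\,e^{r^2/4}\,(e^{(r_1^2-r^2)/4}J-1)$ and $II=\int_{\Sigma_0}(\varphi(r_1)-\varphi(r))\,e^{r_1^2/4}J$. For $II$ I would use $|\varphi(r_1)-\varphi(r)|\le\int_{r\wedge r_1}^{r\vee r_1}|\varphi'|$, the bound $e^{r_1^2/4}J\le Ce^{r^2/4}$, and $e^{r^2/4}|r_1-r|=\textit{O}(r^{-n-3})$; passing to coordinates $x\mapsto(r(x),\omega)\in(0,\infty)\times\Gamma$ with $d\calH^n\le Cr^{n-1}\,dr\,d\calH^{n-1}_\Gamma$ and applying Fubini gives
\[
 |II|\le C\int_{\mathrm{supp}\,\varphi'}|\varphi'(t)|\,t^{-4}\,dt\le CR_1^{-4}\int_0^\infty|\varphi'(t)|\,dt\le CR_1^{-4},
\]
since $\mathrm{supp}\,\varphi'\subset\{t\ge R_1\}$ (using $R_2>R_1$) and $\int_0^\infty|\varphi'|\le 2(1+\delta)\le 4$ — in particular $C$ is independent of $\delta$. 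For $I$ — the decisive step — the smooth map $(u,\nabla^{\Sigma_0}u)\mapsto e^{(r_1^2-r^2)/4}J$ equals $1$ at $(0,0)$ and has \emph{vanishing} first differential there, precisely because $\Sigma_0$ is an expander:
\[
 \partial_u\big(e^{(r_1^2-r^2)/4}J\big)\big|_{(0,0)}=\tfrac12\langle x,\nu_{\Sigma_0}\rangle-H_{\Sigma_0}=0
\]
by \eqref{equ-exp-MCF}, and $\partial_{\nabla u}(e^{(r_1^2-r^2)/4}J)|_{(0,0)}=0$; this is exactly the statement that the expander entropy varies only to second order about the critical point $\Sigma_0$ (compare Theorem \ref{thm:1.5}). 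Taylor's theorem then gives, for $r\ge R_0$, $|e^{(r_1^2-r^2)/4}J-1|\le C(u^2+|\nabla^{\Sigma_0}u|^2)\le Cr^{-2n}e^{-r^2/2}$, so that
\[
 |I|\le C\int_{\Sigma_0}\varphi\,r^{-2n}e^{-r^2/4}\,d\calH^n\le C\int_{R_1}^\infty r^{-2n}e^{-r^2/4}\cdot r^{n-1}\,dr\le Ce^{-R_1^2/8}\le CR_1^{-3},
\]
again with $C$ independent of $\delta$. Adding the two estimates yields $|I+II|\le CR_1^{-3}$ with $C=C(n,\Gamma,R_0)$.

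I expect the main obstacle to be exactly the cancellation used in the estimate of $I$: without it the crude bound is only $|I+II|\le C\int_{R_1}^\infty\varphi\,r^{-n-1}\cdot r^{n-1}\,dr=\textit{O}(R_1^{-1})$, which is too weak and, as explained in the introduction, does not suffice to localise the two individually infinite integrals entering the relative entropy. A secondary, more delicate point is the $\delta$-independence of the constant: the transition term $II$ must be controlled through the total-variation bound $\int_0^\infty|\varphi'|\le C$ rather than by further Taylor-expanding $\varphi(r_1)$ in $u$, which would reintroduce uncontrolled powers of $\|\varphi'\|_\infty\sim\delta^{-1}$ and $\|\varphi''\|_\infty\sim\delta^{-2}$ — here the super-exponential smallness of $r_1-r$ weighed against the factor $e^{r^2/4}$ is what makes this work. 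Finally, one must carry out the routine verification that the exponential-normal-graph collar near $\mS_{R_0}$ does not meet $\mathrm{supp}\,\varphi$, which is immediate from $r_1=r+\textit{O}(r^{-n-3}e^{-r^2/4})$ and $R_1>R_0$.
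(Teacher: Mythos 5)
Your proof is correct but takes a genuinely different route from the paper. The paper interpolates between $\Sigma_0$ and $\Sigma_1$ via $\Sigma'_s=\text{graph}_{\bar E_{0,R_0}}(su)$ and bounds the weighted-area difference by integrating its first and second derivatives in $s$ on $[0,1]$: the first variation at $s=0$ vanishes in the bulk by the expander equation and leaves only a $D\varphi$-localised term of size $O(R_1^{-4})$, while the second variation is estimated via Simon's formula into three terms $E,F,G$, the delicate one being $E$, which carries a $\varphi''\sim\delta^{-2}$ factor and is tamed by an integration by parts in the radial variable to yield $O(R_1^{-3})$. You instead pull $\int_{\Sigma_1}$ back to $\Sigma_0$ and Taylor-expand the density $e^{(r_1^2-r^2)/4}J$ in $(u,\nabla^{\Sigma_0}u)$ to second order pointwise, with the linear term killed by exactly the same criticality $\tfrac12\langle x,\nu_0\rangle=\langle\mathbf H,\nu_0\rangle$, and you treat the $\varphi$-transition term $II$ by Fubini and the total-variation bound $\int_0^\infty|\varphi'|\le C$, exploiting that $|r_1-r|=O(r^{-n-3}e^{-r^2/4})$ is super-exponentially small. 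This sidesteps both the second variation formula and the $\varphi''$ term and in fact gives the slightly sharper $O(R_1^{-4})$ (the bulk term $I$ being even smaller, exponentially so). Both arguments live on the same two pillars — the expander equation annihilates the first-order term, and the super-exponential decay of $u$ does the rest — but your direct pointwise expansion plus the total-variation trick for the cut-off is a clean and somewhat more elementary alternative to the paper's $s$-interpolation.
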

\begin{proof}
 We consider for $s \in [0,1]$ the family
$$ \Sigma'_s = \text{graph}_{\bar{E}_{R_0,0}}(s u)\, .$$
which has the variation vector field
\begin{equation}\label{eq.8}
Y(x,s) = u(\pi(x))\, \nu_{\Sigma_0}(\pi(x)),
\end{equation}
where $\pi$ is the nearest point projection onto $\Sigma_0$. Note that
\begin{equation}\label{eq.8b}
Z:= \frac{\partial}{\partial s} Y \equiv 0\ .
\end{equation}
We compute 
\begin{equation}\label{eq.8c}
\begin{split}
 \frac{d}{ds}\int_{\Sigma'_s}\varphi \, e^\frac{r^2}{4}&\, d\mu = \int_{\Sigma'_s} \left(\langle D\varphi, Y\rangle + \varphi\, \text{div}(Y) + \varphi \frac{\langle x, Y\rangle}{2}\right) e^\frac{r^2}{4}\, d\calH^n\\
&= \int_{\Sigma'_s} \text{div}\Big(\varphi\, e^\frac{r^2}{4}Y\Big) + \left(\langle D\varphi, Y^\perp\rangle + \varphi \frac{\langle x^\perp, Y\rangle}{2}\right)  e^\frac{r^2}{4}\, d\calH^n\\
&= \int_{\Sigma'_s} \varphi \left\langle \frac{x^\perp}{2} -\bfH, Y\right \rangle  e^\frac{r^2}{4}\, d\calH^n + \int_{\Sigma'_s} \langle D\varphi, Y^\perp\rangle e^\frac{r^2}{4}\, d\calH^{n}\, .
 \end{split}
 \end{equation}
 Note that by \eqref{eq.0} and \eqref{eq.8}  we have along $\Sigma_0$ that $ |\langle D\varphi , Y^\perp \rangle | \leq C \delta^{-1} r^{-2} |u| $. 
 
 Denoting $S_{0,\rho} : = \Sigma_0 \cap \mathbb{S}_\rho$, we see with [(\ref{ptwise-est-u}), Theorem \ref{theo-ptwise-est}] for $i=0$ that
\begin{equation*}
\begin{split}
 \left| \int_{\Sigma_0} \langle D\varphi, Y^\perp \rangle\, e^\frac{r^2}{4}\, d\calH^{n} \right| &\leq C \delta^{-1} \int_{R_1}^{R_1+\delta} \rho^{-3-n} \int_{S_{0,\rho}} |\hat{u}| \, d\calH^{n-1}\, d\rho \\
 &\ \ + C \delta^{-1} \int_{R_2-\delta}^{R_2} \rho^{-3-n} \int_{S_{0,\rho}} |\hat{u}| \, d\calH^{n-1}\, d\rho\\
 &\leq C R_1^{-4} \sup_{\rho \geq R_1}\left(\rho^{1-n} \int_{S_{0,\rho}} |\hat{u}|^2\, d\calH^{n-1}\right)^\frac{1}{2}\\
 & \leq C R_1^{-4}\sup_{\rho \geq R_1}|\hat{u}|\leq CR_1^{-4}\, ,
 \end{split}
 \end{equation*}
where we used the decay estimate on $u$ in the penultimate inequality and where $C$ is a positive constant independent of $\delta>0$.  Thus we obtain
\begin{equation}\label{eq.9}
  \frac{d}{ds}\bigg|_{s=0}\int_{\bar{E}_{s,R,\rho}}e^\frac{r^2}{4}\, d\calH^n =  O(R^{-4})\ .
  \end{equation}
 To compute the second derivative we use equation (9.4) in \cite{Simon} together with \eqref{eq.8b} to get
 \begin{equation*}
 \begin{split}
 \frac{d^2}{ds^2}\int_{\Sigma'_s}\varphi\, e^\frac{r^2}{4}\, d\calH^n =& \int_{\Sigma'_s} D^2\varphi(Y,Y)\, e^\frac{r^2}{4}\, d\calH^n\\
 &+ \int_{\Sigma'_s} \bigg(\langle D\varphi, Y\rangle \left( \text{div}(Y) + \frac{\langle x, Y\rangle}{2} \right)\bigg)e^\frac{r^2}{4}\, d\calH^n \\
 & + \int_{\Sigma'_s} \varphi \bigg(\left(\text{div}(Y) + \frac{\langle x, Y\rangle}{2}\right)^2 + \frac{|Y|^2}{2} \\
 &+ \sum_{i=1}^n |(D_{\tau_i}Y)^\perp|^2 - \sum_{i,j=1}^n \langle \tau_i , D_{\tau_j}Y\rangle \langle \tau_j , D_{\tau_i}Y\rangle\bigg) e^\frac{r^2}{4}\, d\calH^n \\
 &=: E+F+G\, ,
 \end{split}
 \end{equation*}
 where $(\tau_i)_{i=1}^n$ is a orthonormal frame along $\Sigma'_s$. Note that by \eqref{eq.8} and since $|D\pi|$ is uniformly bounded, we have
\begin{align*}
|DY|(x) &\leq C \big(|\nabla_{\Sigma_0} u|(\pi(x)) + r^{-1} |u|(\pi(x))\big).
\end{align*}
 We can thus estimate that for $s \in [0,1]$, using [(\ref{ptwise-est-u}), Theorem \ref{theo-ptwise-est}] for $i=0,1$,
 \begin{equation*}
 \begin{split}
  |G| & \leq C\int_{\Sigma'_0}\varphi \left(u^2+|\nabla u|^2\right)  e^\frac{r^2}{4}\, d\calH^n \\
  &\leq R_1^{-n} e^{-\frac{R_1^2}{4}} \int_{\bar{E}_{0,R_1}}\left( \hat{u}^2+ r^2 |\nabla \hat{u}|^2\right)  r^{-n-1}\, d\calH^n\\
  & \leq C R_1^{-n} e^{-\frac{R_1^2}{4}}\, .
  \end{split}
 \end{equation*}
 Similarly as before, we have for the second term, using \eqref{eq.0}
 $$\left|\langle D\varphi, Y\rangle \left( \text{div}(Y) + \frac{\langle x, Y\rangle}{2} \right)\right| \leq C |\varphi'| |u|(|\nabla_{\Sigma_0}u|+ r |u|) \leq C|\varphi'|(r^{-2}|u|+ r |u|^2)$$
 and thus,  denoting $S_{s,\rho} : = \mathbb{S}_\rho\cap \Sigma'_s$ and using that $\hat{u}$ is uniformly bounded, we can estimate
  \begin{equation*}
 \begin{split}
 |F| &\leq C \delta^{-1}\bigg( \int_{R_1}^{R_1+\delta} \rho^{-3-n} \calH^{n-1}(S_{s,\rho}) + \rho^{-1-2n}e^{-\frac{\rho^2}{4}}  \calH^{n-1}(S_{s,\rho})   \, d\rho\bigg)
   \\
 &\ \ + C \delta^{-1} 
 \bigg(\int_{R_2-\delta}^{R_2} \rho^{-3-n}\calH^{n-1}(S_{s,\rho}) + \rho^{-1-2n}e^{-\frac{\rho^2}{4}}  \calH^{n-1}(S_{s,\rho})   \, d\rho\bigg)\\
 &\leq C R_1^{-4} + C R_1^{-2-n}e^{-\frac{R_1^2}{4}} \leq C R_1^{-4}\, .
 \end{split}
 \end{equation*}
 For the first term we note that
 $$D^2\varphi(Y,Y) = \varphi''  \langle Dr,Y\rangle^2 + \varphi' D^2r(Y,Y)$$
 and we can estimate as just before
  \begin{equation*}
 \begin{split}
 \int_{\Sigma'_s}\left|\varphi' D^2r(Y,Y)\right| e^\frac{r^2}{4}\, d\calH^n &\leq C \delta^{-1} \int_{R_1}^{R_1+\delta} \rho^{-3-2n}e^{-\frac{\rho^2}{4}}  \calH^{n-1}(S_{s,\rho})  \, d\rho
   \\
 &\ \ + C \delta^{-1} \int_{R_2-\delta}^{R_2} \rho^{-3-2n}e^{-\frac{\rho^2}{4}}  \calH^{n-1}(S_{s,\rho})\, d\rho\\
 &\leq CR_1^{-4}\, .
\end{split}
 \end{equation*}
 For the other term we note that we can integrate by parts as follows
  \begin{equation*}
 \begin{split}
  \int_{\Sigma'_s} \varphi''  \langle Dr,Y\rangle^2 e^\frac{r^2}{4}\, d\calH^n & =  \int_{R_1}^{R_2} \varphi''(\rho) 
  \int_{S_{s,\rho}} \frac{ \langle Dr,Y\rangle^2}{|\nabla_{\Sigma_s} r|}  e^\frac{r^2}{4}\, d\calH^{n-1}\, d\rho\\
  & = - \int_{R_1}^{R_2} \varphi'(\rho) 
\frac{\partial}{\partial \rho}\left(  \int_{S_{s,\rho}} \frac{ \langle Dr,Y\rangle^2}{|\nabla_{\Sigma_s} r|}  e^\frac{r^2}{4}\, d\calH^{n-1}\right) d\rho \, .
  \end{split}
 \end{equation*}
 Note that for every fixed $s$ the hypersurfaces $S'_{s,\rho} = \rho^{-1} \cdot S_{s,\rho} \subset \mathbb{S}^n$ converge in the $C^4$ topology to the link $\Sigma$, and this convergence is uniform in $s$. We write
 \begin{equation*}
 I(s,\rho):=  \int_{S_{s,\rho}} \frac{ \langle Dr,Y\rangle^2}{|\nabla_{\Sigma_s} r|}  e^\frac{r^2}{4}\, d\calH^{n-1} =  \rho^{n-1} \int_{S'_{s,\rho}} \frac{ \langle Dr(\rho \theta),Y(\rho \theta)\rangle^2}{|\nabla_{\Sigma_s} r(\rho \theta) |}  e^\frac{\rho^2}{4}\, d\calH^{n-1}(\theta)\, .
 \end{equation*}
Thus we can estimate
 \begin{equation*}
 \begin{split}
 \left| \frac{\partial}{\partial \rho} I \right| &\leq C \int_{S_{0,\rho}} \left(\rho |u|^2 + \rho^{-1} |u| \right)  e^\frac{\rho^2}{4}\, d\calH^{n-1} \\
 & \leq C \rho^{-n-2} e^{-\frac{\rho^2}{4}} \int_{S_{0,\rho}} |\hat{u}|^2\, d\calH^{n-1} + C \rho^{-3} \left(\rho^{1-n} \int_{S_{0,\rho}} |\hat{u}|^2\, d\calH^{n-1}\right)^\frac{1}{2}\\
 &\leq C \rho^{-3}\, .
   \end{split}
 \end{equation*}
 As before, this implies
 $$ |E| \leq C R_1^{-3} \, .$$
Combining these estimates we see that 
 \begin{equation*}
 \begin{split}
 \int_{\Sigma_1} \varphi\, e^\frac{r^2}{4}\, d\calH^n - \int_{\Sigma_0} \varphi\, e^\frac{r^2}{4}\, d\calH^n = O(R_1^{-3})\ .
\end{split}
\end{equation*}
\end{proof}

\begin{corollary}\label{thm:1.4} Let $\Sigma_0, \Sigma_1$ be two expanders asymptotic to the cone $C(\Gamma)$. Then the relative entropy 
 $$ \calE_{\Sigma_0,\Sigma_1}:= \lim_{R\ra \infty} \calE_{\Sigma_0,\Sigma_1}(R)$$
 is well defined.
\end{corollary}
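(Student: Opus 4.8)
The plan is to deduce convergence of $\calE_{\Sigma_0,\Sigma_1}(R)$ as $R\to\infty$ from the uniform estimate of Proposition \ref{thm:1.3}, by approximating the characteristic function of a radial annulus by the cut-off functions $\varphi$ considered there and then invoking the Cauchy criterion. Note first that each $\calE_{\Sigma_0,\Sigma_1}(R)$ is well defined since $\calH^n\res \Sigma_i$ is a locally finite measure and $e^{r^2/4}$ is bounded on $B_R(0)$.

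First I would fix $R_0 < R_1 < R_2 - 2$ and $0<\delta<1$, and let $\varphi$ be a cut-off function as in Proposition \ref{thm:1.3}, so that $\varphi\equiv 1$ on $[R_1+\delta,R_2-\delta]$, $\operatorname{supp}\varphi\subset (R_1,R_2)$, and
$$\left|\int_{\Sigma_1}\varphi(r)\,e^{r^2/4}\,d\calH^n-\int_{\Sigma_0}\varphi(r)\,e^{r^2/4}\,d\calH^n\right|\leq C R_1^{-3},$$
with $C$ independent of $\delta$; inspecting the proof of Proposition \ref{thm:1.3} one also sees that $C$ may be taken independent of $R_2$, since every term coming from the region where $\varphi$ fails to be constant near $R_2$ is controlled by a negative power of $R_2>R_1$. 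Writing $\mathbf{1}$ for the characteristic function of $\{R_1<r<R_2\}$, I would then note
$$\calE_{\Sigma_0,\Sigma_1}(R_2)-\calE_{\Sigma_0,\Sigma_1}(R_1)=\int_{\Sigma_1}\mathbf{1}\,e^{r^2/4}\,d\calH^n-\int_{\Sigma_0}\mathbf{1}\,e^{r^2/4}\,d\calH^n,$$
and add and subtract $\int_{\Sigma_i}\varphi(r)\,e^{r^2/4}\,d\calH^n$. Since $\mathbf{1}-\varphi$ is supported in $(R_1,R_1+\delta]\cup(R_2-\delta,R_2)$ and bounded by $1$, each error term is at most $e^{R_2^2/4}\big(\calH^n(\Sigma_i\cap(B_{R_1+\delta}\setminus B_{R_1}))+\calH^n(\Sigma_i\cap(B_{R_2}\setminus B_{R_2-\delta}))\big)$, which tends to $0$ as $\delta\to 0$ by local finiteness of $\calH^n\res\Sigma_i$. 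Letting $\delta\to 0$ therefore yields
$$\big|\calE_{\Sigma_0,\Sigma_1}(R_2)-\calE_{\Sigma_0,\Sigma_1}(R_1)\big|\leq C R_1^{-3}\qquad\text{for all }R_2>R_1+2.$$

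Finally, this shows that the oscillation of $R\mapsto\calE_{\Sigma_0,\Sigma_1}(R)$ on $[R_1+2,\infty)$ is at most $2CR_1^{-3}\to 0$ as $R_1\to\infty$, so $\calE_{\Sigma_0,\Sigma_1}(R)$ satisfies the Cauchy criterion at infinity and the limit $\calE_{\Sigma_0,\Sigma_1}$ exists. The proof is short because all the analytic work is already contained in Proposition \ref{thm:1.3}; the only points needing a little care — and hence the (mild) main obstacle — are checking that the constant in Proposition \ref{thm:1.3} is genuinely independent of the outer radius $R_2$, and that the thin-annulus error terms vanish as $\delta\to 0$, both of which follow from the local finiteness of $\calH^n\res\Sigma_i$ and the boundedness of $e^{r^2/4}$ on bounded sets.
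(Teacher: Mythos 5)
Your argument is correct and is exactly the route the paper has in mind; the corollary is stated without separate proof precisely because Proposition~\ref{thm:1.3} is already phrased with $\varphi$ supported in $(R_1,R_2)$ and with $C$ uniform in $\delta$, so that sending $\delta\to 0$ gives the Cauchy criterion as you describe. Your inspection that $C$ is also uniform in $R_2$ is the one point that genuinely needs checking, and it holds because every boundary contribution at $r\approx R_2$ in the proof of Proposition~\ref{thm:1.3} is bounded by a negative power of $R_2>R_1$.
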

 We turn to the differentiability of the relative entropy that is now well-defined by Corollary \ref{thm:1.4}:
 
\begin{theorem}\label{thm:1.5} 
Let  $(\Gamma_s)_{-\eps<s<\eps}$ be a continuously differentiable  family of $C^5$ hypersurfaces of $\mathbb{S}^n$. Assume that $(\Sigma_{0,s}, \Sigma_{1,s})_{-\eps<s<\eps}$ is a continuously differentiable family of expanders such that both $\Sigma_{0,s}, \Sigma_{1,s}$ are asymptotic to $C_s:=C(\Gamma_s)$. We assume further, that the normal parts of the corresponding variation vectorfields $Y^\perp_{0,s}, Y^\perp_{1,s}$ are  asymptotic to the variation vectorfield
 $$ Z(s) = r \psi(\theta,s) \nu_s,$$
 where $\nu_s$ is a choice of continuous normal vectorfield of $C_s$ and $\theta \in \Gamma_s $. Then
 $\calE_{\Sigma_{0,s},\Sigma_{1,s}}$ is differentiable in $s$ with
 \begin{equation}\label{eq.10}
 \frac{d}{ds}\bigg|_{s=0} \calE_{\Sigma_{0,s},\Sigma_{1,s}} = -\frac{1}{2} \int_\Gamma \psi \tr_{\infty}^0(\hat{u})\, d\calH^{n-1}\, ,
 \end{equation}
where $\tr_{\infty}^0(\hat{u})$ is the trace of $\hat{u}$ at infinity, as defined in Corollary \ref{coro-def-trace-inf}.
 
 \end{theorem}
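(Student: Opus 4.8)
The plan is to differentiate a cut‑off version of the relative entropy and interchange the $s$‑derivative with the limit defining $\calE_{\Sigma_0,\Sigma_1}$. For $R\geq R_1$ fix $\varphi_R:[0,\infty)\to[0,1]$ with $\varphi_R\equiv1$ on $[0,R]$, $\varphi_R\equiv0$ on $[R+1,\infty)$, $|\varphi_R'|+|\varphi_R''|\leq C$, and set $g_R(s):=\int_{\Sigma_{1,s}}\varphi_R(r)e^{r^2/4}\,d\calH^n-\int_{\Sigma_{0,s}}\varphi_R(r)e^{r^2/4}\,d\calH^n$. A computation parallel to the proof of Proposition \ref{thm:1.3}, uniform in $s$ since the constants there depend only continuously on the link, gives $g_R(s)=\calE_{\Sigma_{0,s},\Sigma_{1,s}}(R)+O(R^{-3})$; hence, by Corollary \ref{thm:1.4}, $g_R(s)\to\calE_{\Sigma_{0,s},\Sigma_{1,s}}$ as $R\to\infty$, uniformly for $s$ near $0$. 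As each $g_R$ is $C^1$ in $s$, the elementary theorem on differentiating a locally uniform limit reduces the statement to showing that $g_R'(s)$ converges, uniformly for $s$ near $0$, to $-\tfrac12\int_{\Gamma_s}\psi(\cdot,s)\,\tr_\infty^0(\hat u_s)\,d\calH^{n-1}$, which at $s=0$ is \eqref{eq.10}.

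Applying \eqref{eq.8c} to each family $\Sigma_{i,s}$, the bulk term $\int\varphi_R\langle\tfrac{x^\perp}{2}-\bfH,Y_{i,s}\rangle e^{r^2/4}$ vanishes because $\Sigma_{i,s}$ is an expander, so with $v_{i,s}:=\langle Y_{i,s},\nu_{\Sigma_{i,s}}\rangle$, $D\varphi_R=\varphi_R'(r)r^{-1}x$, and $\langle x,\nu_{\Sigma_{i,s}}\rangle=2\langle\bfH,\nu_{\Sigma_{i,s}}\rangle=O(r^{-1})$,
\begin{equation*}
\frac{d}{ds}\int_{\Sigma_{i,s}}\varphi_R\,e^\frac{r^2}{4}\,d\calH^n=\int_{\Sigma_{i,s}}\varphi_R'(r)\,\frac{v_{i,s}\,\langle x,\nu_{\Sigma_{i,s}}\rangle}{r}\,e^\frac{r^2}{4}\,d\calH^n\,.
\end{equation*}
Since $v_{i,s}$ grows linearly (Lemma \ref{lem: first jacobi estimate}), each of these two integrals, taken separately, diverges like $R^{n-2}e^{R^2/4}$; the content of the proof is the cancellation in the difference $g_R'(s)$. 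At $s=0$ (the general case being identical) one writes $\Sigma_{1,0}=\text{graph}_{\bar E_{0,R_0}}(u)$, sets $F(p)=p+u(p)\nu_{\Sigma_0}(p)$, pulls the $\Sigma_{1,0}$‑integral back to $\Sigma_{0,0}$ via $F$, and compares integrands on $\{\varphi_R'\neq0\}$, which lies in the graph region.

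The comparison rests on three facts on $\bar E_{0,R_0}$, with constants depending only continuously on the link. First, by the sharp decay $u=O(r^{-n-1}e^{-r^2/4})$ of Theorem \ref{theo-ptwise-est}, the pull‑backs of $r$ and $e^{r^2/4}$ and the Jacobian of $F$ differ from $r$, $e^{r^2/4}$, $1$ by relative errors $O(r^{-n-2}e^{-r^2/4})$. Second, since the two Jacobi fields come from the \emph{same} variation $Z(0)=r\psi\nu_0$ of the cone, Lemma \ref{lem: second jacobi estimate} gives $v_{1,0}\circ F-v_{0,0}=O(r^{-n-1+\varepsilon}e^{-r^2/4})$, so one may replace $v_{1,0}\circ F$ by $v_{0,0}=r(\psi\circ\pi_{\Gamma_0})+O(r^{-1})$. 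Third, the normal graph computations of Appendix \ref{app:NG} give
\begin{equation*}
\langle x,\nu_{\Sigma_1}\rangle\circ F-\langle x,\nu_{\Sigma_0}\rangle=u-\langle x^\top,\nabla^{\Sigma_0}u\rangle+(\text{quadratic})=\frac{r^2}{2}u+O\!\left(r^{-n-1}e^{-\frac{r^2}{4}}\right),
\end{equation*}
the leading term coming from $\nabla^{\Sigma_0}u=-\tfrac r2 u\,\nabla^{\Sigma_0}r+O(r^{-n-2}e^{-r^2/4})$ and $\langle x,\nabla^{\Sigma_0}r\rangle=r+O(r^{-3})$, i.e.\ from the Gaussian weight in the decay of $u$. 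Thus the pulled‑back integrand equals the $\Sigma_{0,0}$‑integrand plus $\varphi_R'(r)\,\tfrac r2\,v_{0,0}\,u\,e^{r^2/4}$ plus error terms of the form $\varphi_R'(r)\cdot(\text{polynomial})\cdot r^{-n-1+\varepsilon}e^{-r^2/4}\cdot e^{r^2/4}$ or smaller, each contributing $o(1)$ as $R\to\infty$ after integration over $\Sigma_{0,0}$.

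It remains to evaluate the main term. Using $v_{0,0}=r(\psi\circ\pi_{\Gamma_0})+O(r^{-1})$, $u=r^{-n-1}e^{-r^2/4}\hat u$ and the coarea formula,
\begin{equation*}
\int_{\Sigma_{0,0}}\varphi_R'(r)\,\frac r2\,v_{0,0}\,u\,e^\frac{r^2}{4}\,d\calH^n=\frac12\int_0^\infty\varphi_R'(\rho)\,\rho^{1-n}\int_{\Sigma_{0,0}\cap\mathbb{S}_\rho}\frac{(\psi\circ\pi_{\Gamma_0})\,\hat u}{|\nabla^{\Sigma_0}r|}\,d\calH^{n-1}\,d\rho+o(1)\,.
\end{equation*}
Since $\rho^{-1}(\Sigma_{0,0}\cap\mathbb{S}_\rho)\to\Gamma$ in $C^1$, $|\nabla^{\Sigma_0}r|\to1$, and $\hat u-\tr_\infty^0(\hat u)=O(r^{-2})$ by Corollary \ref{coro-def-trace-inf}, the inner integral equals $\rho^{n-1}\int_\Gamma\psi\,\tr_\infty^0(\hat u)\,d\calH^{n-1}+O(\rho^{n-3})$, so the expression tends to $\bigl(\int_0^\infty\varphi_R'\bigr)\cdot\tfrac12\int_\Gamma\psi\,\tr_\infty^0(\hat u)\,d\calH^{n-1}=-\tfrac12\int_\Gamma\psi\,\tr_\infty^0(\hat u)\,d\calH^{n-1}$, uniformly in $s$, which yields \eqref{eq.10}. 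The main obstacle is precisely this bookkeeping: because the individual $s$‑derivatives diverge, one must expand the pulled‑back integrand far enough and pin down the single term that survives multiplication by $e^{r^2/4}$; the sharp pointwise estimates of Theorem \ref{theo-ptwise-est} and Corollary \ref{coro-def-trace-inf}, and above all the \emph{exponential} decay of $v_{1,0}\circ F-v_{0,0}$ from Lemma \ref{lem: second jacobi estimate} (the merely polynomial decay obtained by comparing the two expanders to their common asymptotic cone would be useless), are what make the cancellation quantitative, while the constant $\tfrac12$ traces back to the $\tfrac14$ in the Gaussian.
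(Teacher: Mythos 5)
Your proof is correct and takes essentially the same route as the paper: cut off the entropy, differentiate via the first variation formula so that only the boundary terms $\int\langle D\varphi_R, Y^\perp\rangle e^{r^2/4}$ survive, pull the $\Sigma_{1,s}$-integral back to $\Sigma_{0,s}$ by the normal graph map $F$, use Lemma \ref{lem: second jacobi estimate} for the exponential decay of $v_1\circ F - v_0$ and Theorem \ref{theo-ptwise-est} for the sharp decay of $u$ to isolate the single surviving term $\varphi_R'\frac{r}{2}v_0ue^{r^2/4}\approx\frac12\varphi_R'\,r^{1-n}\psi\hat u$, and evaluate its limit via $\tr_\infty^0(\hat u)$ and the coarea formula. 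Your identification of the leading term through $\langle x,\nu_1\rangle\circ F-\langle x,\nu_0\rangle\approx u-r\partial_r u\approx\tfrac{r^2}{2}u$ matches the paper's computation \eqref{eq:12} exactly, only organized slightly differently; the justification for passing the derivative through the $R\to\infty$ limit is likewise the same uniform-convergence argument implicit in the paper's appeal to Proposition \ref{thm:1.3}.
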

 
\begin{proof}
In order to lighten the notation, we denote the trace of $\hat{u}$ at infinity by $\hat{a}:=\tr_{\infty}^0(\hat{u})$.

 For $R > R_0+2$ and $0<\delta<1$ let $\varphi: \R \ra [0,1]$  be a smooth cut-off function, compactly supported in $(-\infty, 0]$, such that $\varphi \equiv 1$ on $(-\infty, -\delta]$, $|\varphi'| \leq (1+\delta)\delta^{-1}$ and $|\varphi''| \leq (1+\delta) \delta^{-2}$. We let $\varphi_R:= \varphi(r-R)$ and define
 $$\calE_{\Sigma_{0,s},\Sigma_{1,s}, \varphi_R}:= \int_{\Sigma_{1,s}} \varphi_R\, e^\frac{r^2}{4}\, d\calH^n - \int_{\Sigma_{0,s}} \varphi_R\, e^\frac{r^2}{4}\, d\calH^n\, .$$
 
 Computing as in \eqref{eq.8c} we see that
\begin{equation}\label{eq:10b}
 \begin{split}
\frac{d}{ds}\calE_{\Sigma_{0,s},\Sigma_{1,s}, \varphi_R} =  \int_{\Sigma_{1,s}} \langle D\varphi_R, Y_{1,s}^\perp \rangle\, e^\frac{r^2}{4}\, d\calH^n - \int_{\Sigma_{0,s}} \langle D\varphi_R, Y_{0,s}^\perp \rangle \, e^\frac{r^2}{4}\, d\calH^n\, .
 \end{split}
 \end{equation}
 Again write the end $\Sigma_{1,R_0} \subset \Sigma_1$ as an exponential normal graph over $\bar{E}_{0,{R_0}}$  with height function $u : \bar{E}_{0,R} \ra \R$. We fix $x_0 \in \Sigma_0\setminus B_{R_0}(0)$ and choose a local parametrisation of $\Sigma_0$ in a neighbourhood of $x_0$, $F:\Omega \ra \Sigma_0$ such that $F^{-1}$ are normal coordinates around $x_0$, $F(0) = x_0$, such that the second fundamental form $h_{\Sigma_0}$ of $\Sigma_0$ is diagonalised at $x_0$ with principal curvatures $\lambda_1, \ldots,, \lambda_n$. Thus we obtain a local parametrisation of $\Sigma_1$ in a neighborhood of $y_0= x_0 + u(x_0) \nu_0(x_0)$, where $\nu_0$ is a choice of unit normal vectorfield of $\Sigma_0$ via
 $$ \tilde{F}(p)= F(p) + u(p) \nu_0(p)\, .$$
 Then again by the formulas in Appendix \ref{app:NG} we have that
 $$ \nu_1(y_0) = - \nabla_{\Sigma_0}u (x_0)+ \nu_0(x_0) + Q_0(x_0,u,\nabla u) $$
 where $|Q_0(p,u,Du)| \leq C(r^{-1}|u| + |\nabla u|^2)$. Note further that
 $$ |y_0| = (r(x_0)^2 + 2 \langle x_0, \nu_0(x_0)\rangle u(x_0) + u^2(x_0))^\frac{1}{2} = r(x_0)+ Q_1(x_0,u)\, , $$
 where $|Q_1(p,u)| \leq C r^{-2}|u|$. Thus
 $$ \left \langle \frac{y_0}{|y_0|}, \nu_1(y_0)\right \rangle =  \left \langle \frac{x_0}{|x_0|}, \nu_0(x_0)\right \rangle - \partial_r u(x_0) + r^{-1}(x_0) u(x_0) + Q_2(x_0,u,\nabla u) $$
 where $|Q_2(p,u,Du)| \leq C(r^{-2}|u| + r^{-1} |\nabla u|^2)$. 
 
 For the Jacobian of the map $x \mapsto x + u \nu_0$ we obtain from \eqref{eq:ng.2b}
 $$ \sqrt{\det g} = 1 + Q_3(x_0,u)\, ,$$
 where $|Q_3(p,u)| \leq C r^{-1}|u|$. Furthermore,
 $$ e^\frac{|y_0|^2}{4} = e^\frac{|x_0|^2}{4} + e^\frac{|x_0|^2}{4} Q_4(x_0,u)\, ,$$
 where $|Q_4(p,u)| \leq C r^{-1}|u|$ and
 $$ \varphi_R'(|y_0|) = \varphi_R'(|x_0|) + Q_5(x_0, u)$$
 where $|Q_5(p,u)| \leq C\delta^{-2} r^{-2}|u|$.
 
  We now denote $ v_1:= \langle Y_{1,s}, \nu_1\rangle$ and $v_0:= \langle Y_{0,s}, \nu_1\rangle$,
 which are Jacobi fields along $\Sigma_{1,s}$ and $\Sigma_{0,s}$ respectively. We write $\tilde{v}_1(x_0) = v_1(y_0)$. Note that by Lemma \ref{lem: second jacobi estimate} we have that
 \begin{equation}\label{eq:10b2}
  |\tilde{v}_1 - v_0| \leq C r^{-n-1+\varepsilon} e^{- \frac{r^2}{4}}\, ,
  \end{equation}
  for some $\varepsilon\in(0,1)$.
Note further that
$$ \langle \nu_0(x_0), Y_{0,s}(x_0)\rangle  = r(x_0) \psi(x_0) + Q_7(x_0),$$
where $|Q_7(x,u,\nabla u)| \leq C r^{-1} $. Combining all these estimates, we arrive at
 \begin{equation}\label{eq:10c}
 \begin{split}
\varphi_R'&(|y_0|) \left \langle \frac{y_0}{|y_0|}, Y^\perp_{1,s}(y_0)\right \rangle e^\frac{|y_0|^2}{4} \sqrt{\det g(y_0)} - \varphi_R'(|x_0|) \left \langle \frac{x_0}{|x_0|}, Y^\perp_{0,s}(x_0)\right \rangle e^\frac{|x_0|^2}{4}
\\
&= \varphi'_R(|x_0|)\left(r^{-1}u(x_0) -\partial_r u(x_0)\right)\langle \nu_0(x_0), Y_{0,s}(x_0)\rangle e^\frac{|x_0|^2}{4} + e^\frac{|x_0|^2}{4} Q_8(x,u,\nabla u)\\
&= \varphi'_R(|x_0|)\psi \left(u(x_0) - r\partial_r u(x_0)\right) e^\frac{|x_0|^2}{4} + e^\frac{|x_0|^2}{4} Q_9(x,u,\nabla u)
\end{split}
\end{equation}
where 
\begin{equation}
\begin{split}
&|Q_8(x,u,\nabla u)| \leq C \delta^{-2} (r^{-1} |u| + r^{-2} |\tilde{v}_1-v_0| + |\nabla u|^2),\\
& |Q_9(x,u,\nabla u)| \leq C \delta^{-2} (r^{-1} |u| + r^{-2} |\tilde{v}_1 -v_0| + r^{-1} |\partial_r u| + |\nabla u|^2),
\end{split}
\end{equation}
and both $Q_8$ and $Q_9$ are supported in $\bar{B}_R\setminus B_{R-\delta}$. This implies that
\begin{equation}\label{eq:10f}
 \begin{split}
\int_{\Sigma_{1,s}} &\langle D\varphi_R, Y_{1,s}^\perp \rangle\, e^\frac{r^2}{4}\, d\calH^n - \int_{\Sigma_{0,s}} \langle D\varphi_R, Y_{0,s}^\perp \rangle \, e^\frac{r^2}{4}\, d\calH^n \\
&= \int_{\Sigma_{0,s}} \varphi'_R \psi(\theta) \left(u - r\partial_r u \right) e^\frac{r^2}{4}\, d\calH^n + 
\int_{\Sigma_{0,s}} Q_9\,  e^\frac{r^2}{4}\, d\calH^n
\end{split}
\end{equation}
We write $u = r^{-n-1}\hat{u} e^{-\frac{r^2}{4}}$ and thus
\begin{equation}\label{eq:12}
 u - r \partial_r u =  - \partial_r\hat{u}\, r^{-n}e^{-\frac{r^2}{4}} + \hat{u} \left((n+2) r^{-1-n} + \frac{1}{2}r^{1-n}\right) e^{-\frac{r^2}{4}}\, .
\end{equation}
Combining \eqref{eq:10b} with \eqref{eq:10f}, \eqref{eq:12} we obtain
\begin{equation}\label{eq:13}
 \begin{split}
\frac{d}{ds} &\calE_{\Sigma_0(s),\Sigma_1(s),\varphi_R} = \frac{1}{2}\int_{\Sigma_{0,s}}r^{1-n}\varphi'_R(r)\psi(\theta) \hat{u} \, d\calH^n \\
&+ \int_{\Sigma_{0,s}}\varphi'_R\left((n+2) r^{-1-n}\hat{u} - r^{-n} \partial_r\hat{u}\right) +  Q_9\,  e^\frac{r^2}{4}\, d\calH^n\\
&=: \frac{1}{2}\int_{\Sigma_{0,s}}r^{1-n}\varphi'_R(r)\psi(\theta) \hat{u} \, d\calH^n + \int_{\Sigma_{0,s}} Q_{10}\,  e^\frac{r^2}{4}\, d\calH^n\, ,
\end{split}
 \end{equation}
 where we can estimate, using \eqref{eq:10b2} and \eqref{eq:12},
 $$ |Q_{10}| e^\frac{r^2}{4} \leq \delta^{-2} C \left( r^{-n-3+\varepsilon} + r^{-n-1} |\hat{u}| + r^{-n-2}|\partial_r \hat{u}| + r^{-2n-2} |\nabla \hat{u}|^2 e^{-\frac{r^2}{4}} \right) \, ,$$
 and $Q_{10}$ is supported in $B_R(0)\setminus B_{R-\delta}(0)$.
 The estimates in Theorem \ref{theo-ptwise-est} now directly imply that
 $$ \int_{\Sigma_{0,s}} Q_{10}\,  e^\frac{r^2}{4}\, d\calH^n \ra 0 $$
 as $R \ra \infty$. Since $\hat{u}$ is asymptotically homogenous of degree zero, it follows from Corollary \ref{coro-def-trace-inf} that as $R\ra \infty$
 \begin{equation}\label{eq:14}
 \begin{split} \frac{1}{2}\int_{\Sigma_{0,s}}r^{1-n}\varphi'_R(r)\psi(\theta) \hat{u} \, d\calH^n &\ra \frac{1}{2} \int_{-\delta}^0 \varphi'(r) dr \int_\Gamma \psi \hat{a} \, d\calH^{n-1}\\
 &\ \ \ \ = - \frac{1}{2} \int_\Gamma \psi \hat{a} \, d\calH^{n-1}\, .
\end{split}
 \end{equation}
 Note that by Proposition \ref{thm:1.3} the limit
 $$\calE_{\Sigma_{0,s},\Sigma_{1,s}, \varphi} = \lim_{R\ra \infty} \calE_{\Sigma_{0,s},\Sigma_{1,s}, \varphi_R}$$
 exists, and the convergence is uniform in $\delta$. Furthermore $\calE_{\Sigma_{0,s},\Sigma_{1,s}, \varphi}$
 is differentiable with 
 $$ \frac{d}{ds} \calE_{\Sigma_0(s),\Sigma_1(s),\varphi} = - \frac{1}{2} \int_\Gamma \psi \hat{a} \, d\calH^{n-1} \, .$$
This implies that also $\calE_{\Sigma_0(s),\Sigma_1(s)}$ is differentiable with
$$ \frac{d}{ds} \calE_{\Sigma_0(s),\Sigma_1(s)} = - \frac{1}{2} \int_\Gamma \psi \hat{a} \, d\calH^{n-1}\, .$$
\end{proof}
 
\section{Generic Uniqueness}\label{gen-uniqueness-section}

We can combine the previous computations with the results of \cite{Bernstein17} and \cite{BernsteinWang17} to prove the following generic uniqueness theorem, which follows the strategy of White \cite{White87}.

We let $\Pi:\mathcal{ACE}_n^{k,\alpha}(\Gamma)\rightarrow C^{k,\alpha}(\Gamma;\R^{n+1})$, $k\geq 2$, $\alpha\in(0,1)$,  be the boundary map from the space of asymptotically conical expanders denoted by $\mathcal{ACE}_n^{k,\alpha}(\Gamma)$, which assigns to each expander the link of its asymptotic cone: see  \cite[Section 2]{BernsteinWang17} for definitions. We note that by work of Bernstein-Wang \cite{BernsteinWang17}, this map is Fredholm of degree $0$. 

\begin{theorem} \label{gen-uni-0-rel-ent}The set of regular values of the boundary map $$\Pi:\mathcal{ACE}_n^{k,\alpha}(\Gamma)\rightarrow C^{k,\alpha}(\Gamma;\R^{n+1}), \quad k\geq 5,\quad \alpha\in(0,1),$$ with more than one preimage with vanishing relative entropy is of codimension 1. In particular, the set of $C^{k,\alpha}$ asymptotic cones which possess multiple corresponding expanders in $\mathcal{ACE}_n^{k,\alpha}(\Gamma)$ with vanishing relative entropy is of first category. Moreover, the set of $C^{k,\alpha}$ asymptotic cones $C(\Gamma)$ that are smoothed out by more than one locally entropy minimising expander in $\mathcal{ACE}_n^{k,\alpha}(\Gamma)$ is of first category in the Baire sense. 
 \end{theorem}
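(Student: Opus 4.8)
The plan is to follow the scheme of White \cite{White87}, taking as inputs the index-$0$ Fredholm property of the boundary map $\Pi$ and the accompanying Sard--Smale statement from Bernstein--Wang \cite{BernsteinWang17}, the interior and at-infinity unique continuation theorems of Bernstein \cite{Bernstein17}, and the first variation formula for the relative entropy from Theorem \ref{thm:1.5}. \emph{Step 1: a Banach manifold of pairs.} Since $\Pi$ is Fredholm of index $0$, Sard--Smale gives that the set of regular values of $\Pi$ is residual, so its complement, the set of singular values, is of first category; moreover at a regular point $\Sigma$ of $\Pi$ the differential $D_\Sigma\Pi$ is onto with vanishing cokernel, hence --- the index being $0$ --- an isomorphism, so $\Pi$ is a local diffeomorphism near $\Sigma$. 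I would then introduce
$$ \mathcal{P}:=\{(\Sigma_0,\Sigma_1)\ :\ \Sigma_0,\Sigma_1\in\mathcal{ACE}_n^{k,\alpha}(\Gamma)\ \text{regular points of}\ \Pi,\ \Pi(\Sigma_0)=\Pi(\Sigma_1),\ \Sigma_0\neq\Sigma_1\}, $$
an open subset of the fibre product of two local diffeomorphisms, hence a Banach manifold, together with the map $\widetilde\Pi:\mathcal{P}\to C^{k,\alpha}(\Gamma;\R^{n+1})$ sending a pair to the link of its common asymptotic cone; $\widetilde\Pi$ is again a local diffeomorphism.

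\emph{Step 2: the relative entropy has nonvanishing differential on $\mathcal{P}$.} By Corollary \ref{thm:1.4} the relative entropy descends to a function $\calE:\mathcal{P}\to\R$, $(\Sigma_0,\Sigma_1)\mapsto\calE_{\Sigma_0,\Sigma_1}$, and by Theorem \ref{thm:1.5} --- whose hypotheses are satisfied by the differentiable families obtained by lifting paths of links through the local diffeomorphism $\widetilde\Pi$ --- this function is $C^1$ with
$$ d\calE_{(\Sigma_0,\Sigma_1)}(\psi)=-\tfrac12\int_\Gamma\psi\,\tr_{\infty}^0(\hat u)\, d\calH^{n-1}, $$
where $\hat u=r^{n+1}e^{r^2/4}u$ is the rescaled exponential normal graph of $\Sigma_1$ over $\Sigma_0$ and $\psi$ the corresponding normal speed of the link. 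I would then argue that $d\calE_{(\Sigma_0,\Sigma_1)}\neq 0$ at every point of $\mathcal{P}$: if $u$ vanished on a neighbourhood of infinity, $\Sigma_0$ and $\Sigma_1$ would agree there, and interior unique continuation for \eqref{eq.1} would force $\Sigma_0=\Sigma_1$, a contradiction; hence $u\not\equiv 0$, whence Bernstein's unique continuation at infinity gives $\tr_{\infty}^0(\hat u)\not\equiv 0$ on $\Gamma$, and since $\widetilde\Pi$ is a submersion every $\psi$ is realised by a family in $\mathcal{P}$, so the displayed functional is nonzero.

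\emph{Step 3: conclusion.} Then $0$ is a regular value of $\calE:\mathcal{P}\to\R$, so $\calE^{-1}(0)$ is a codimension-$1$ $C^1$ Banach submanifold of $\mathcal{P}$ onto which $\widetilde\Pi$ restricts as a local diffeomorphism with image in codimension-$1$ submanifolds of $C^{k,\alpha}(\Gamma;\R^{n+1})$; covering $\calE^{-1}(0)$ by countably many charts (the countability being handled exactly as in \cite[Section 7]{White87}), $\widetilde\Pi(\calE^{-1}(0))$ is a countable union of locally closed codimension-$1$ submanifolds, in particular of first category. If $\Gamma$ is a regular value of $\Pi$ with two distinct preimages $\Sigma_0\neq\Sigma_1$ and $\calE_{\Sigma_0,\Sigma_1}=0$, then both preimages are regular points, so $(\Sigma_0,\Sigma_1)\in\calE^{-1}(0)$ and $\Gamma\in\widetilde\Pi(\calE^{-1}(0))$: this gives the first assertion, and adjoining the first-category set of singular values of $\Pi$ gives the second. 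For the third assertion it suffices to observe that any two locally entropy minimising expanders $\Sigma_0,\Sigma_1$ asymptotic to $C(\Gamma)$ satisfy $\calE_{\Sigma_0,\Sigma_1}\geq 0$ and $\calE_{\Sigma_1,\Sigma_0}\geq 0$, and since $\calE_{\Sigma_1,\Sigma_0}=-\calE_{\Sigma_0,\Sigma_1}$ one gets $\calE_{\Sigma_0,\Sigma_1}=0$, reducing to the second assertion.

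The main obstacle is not this synthesis --- granting the inputs, the transversality and Baire-category steps are routine --- but rather in establishing that $\calE$ is a bona fide $C^1$ function on $\mathcal{P}$, in the Fréchet sense, with the derivative of Theorem \ref{thm:1.5}; this is exactly what the sharp pointwise estimates of Theorem \ref{theo-ptwise-est}, the trace at infinity of Corollary \ref{coro-def-trace-inf}, and the careful matching of asymptotics carried out in Section \ref{ren-exp-ent-section} are designed to supply (and is why the regularity $k\geq 5$ is imposed). A secondary point requiring care is that $\widetilde\Pi$ is a local diffeomorphism, which is precisely where the index-$0$ Fredholm theorem of \cite{BernsteinWang17} is essential: without it a regular value of $\Pi$ need not carry a discrete family of expanders, and the pair construction would have to be replaced by a fibre-restricted argument as in \cite{White87}.
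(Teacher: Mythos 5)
Your proposal follows the same strategy as the paper's proof: the Fredholm/Sard--Smale theory of Bernstein--Wang \cite{BernsteinWang17} for the boundary map $\Pi$, the first-variation formula of Theorem \ref{thm:1.5} combined with Bernstein's unique continuation at infinity \cite{Bernstein17} to show the relative entropy has nonvanishing differential, the implicit function theorem to obtain the codimension-one statement, and the ribbon-competitor argument to reduce the locally entropy minimising case to vanishing relative entropy. Your packaging in terms of a Banach manifold of pairs $\mathcal{P}$ with the map $\widetilde\Pi$ is a presentational variant of the paper's chart-based functional $\tilde{\mathcal{E}}$, and you correctly identify the $C^1$-regularity of the relative entropy --- established in the paper via Claim \ref{claim-c0-trace} together with the quantitative dependence of the constants in Theorem \ref{theo-ptwise-est} --- as the one nontrivial technical step you are taking as given.
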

 \begin{remark}
 The reason why Theorem \ref{gen-uni-0-rel-ent} asks for some amount of regularity $k\geq 5$ of the link of the asymptotic cone is due to the estimates of the first part of Theorem \ref{theo-ptwise-est} in order to make sense of the trace at infinity $\tr_{\infty}^0(\hat{u})$ in Corollary \ref{coro-def-trace-inf}.
 \end{remark}
\begin{proof}
First, by  \cite[Theorem 1.1, equation (2)]{BernsteinWang17}, the projection map $$\Pi:\mathcal{ACE}_n^{k,\alpha}(\Gamma)\rightarrow C^{k,\alpha}(\Gamma;\R^{n+1}), \quad k\geq 4,\quad \alpha\in(0,1),$$ is a smooth map of Fredholm index $0$. Moreover, \cite[Corollary 1.2]{BernsteinWang17} asserts that the set of regular values of $\Pi$ is an open and dense subset of $C^{k,\alpha}(\Gamma;\R^{n+1})$. 

Now, consider the set of regular values of $\Pi$ with more than one preimage with vanishing relative entropy:
\begin{eqnarray*}
\mathcal{C}:=&\big\{(\Sigma_1,\Sigma_0):\quad \text{$\Sigma_i\in \mathcal{ACE}_n^{k,\alpha}(\Gamma)$, $i=0,1$, $\Pi(\Sigma_0)=\Pi(\Sigma_1)$,}\\
&\text{ regular value of $\Pi$, $\Sigma_0\neq\Sigma_1$, $\mathcal{E}_{\Sigma_0,\Sigma_1}=0$}\big\}.
\end{eqnarray*}
This set has been originally introduced by White in \cite[Section 7]{White87} in the context of minimal surfaces.

Our goal is to prove that $\mathcal{C}$ has codimension $1$.
Let $\Sigma_i$, $i=0,1$ be two expanders lying in $\mathcal{C}$ and let $C(\Gamma)$ be their common asymptotic cone. Let $F_i:U_i\rightarrow \mathcal{ACE}_n^{k,\alpha}(\Gamma)$, $i=0,1$ be corresponding charts where $U_i$, $i=0,1$ are neighborhoods of the link $\Gamma$ in $C^{k,\alpha}(\Gamma,\R^{n+1})$: these charts are provided by \cite[Theorem 7.1]{BernsteinWang17}. Let us consider the following relative entropy defined on $U_0\cap U_1$:
\begin{eqnarray*}
\tilde{\mathcal{E}}: \Gamma'\in U_0\cap U_1\rightarrow \mathcal{E}_{F_0(\Gamma'),F_1(\Gamma')}\in\R.
\end{eqnarray*}
Thanks to Theorem \ref{thm:1.5}, the functional $\tilde{\mathcal{E}}$ is differentiable at $\Gamma$ and its differential in direction $\psi$ is:
\begin{eqnarray}
d_{\Gamma}\tilde{\mathcal{E}}(\psi)=-\frac{1}{2} \int_\Gamma \psi \tr_{\infty}^0(\hat{u})\, d\calH^{n-1}\, ,\label{diff-fct-rel-ent-chart}
\end{eqnarray}
where $\hat{u}=r^{n+1}e^{\frac{r^2}{4}}(F_1(\Gamma)-F_0(\Gamma)).$
We invoke the results of Bernstein  \cite[Theorem]{Bernstein17} stating that if $\tr_{\infty}^0(\hat{u})$ vanishes, then the two asymptotically conical expanders $\Sigma_0$ and $\Sigma_1$ have to coincide. Thus (\ref{diff-fct-rel-ent-chart}) implies that the map $\Pi$ is a local submersion. The result then follows from the implicit function theorem applied to the functional $\tilde{\mathcal{E}}$, provided it is $C^1$:
\begin{claim}\label{claim-c0-trace}
The map $$\Gamma'\in U_0\cap U_1\rightarrow \tr_{\infty}^0\Big(r^{n+1}e^{\frac{r^2}{4}}\left(F_1(\Gamma')-F_0(\Gamma')\right)\Big)\in C^0(\Gamma,\R^{n+1}),$$ is continuous.
\end{claim}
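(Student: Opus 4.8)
The plan is to prove sequential continuity at an arbitrary point $\Gamma_0'\in U_0\cap U_1$, using the uniform pointwise estimates of Theorem~\ref{theo-ptwise-est} together with the quadratic radial convergence established in the proof of Corollary~\ref{coro-def-trace-inf}. First I would fix a common reference end. Since the Bernstein--Wang charts $F_i$ are continuous into $\mathcal{ACE}_n^{k,\alpha}(\Gamma)$ with $k\geq5$, for $\Gamma'$ in a small enough neighbourhood $W$ of $\Gamma_0'$ both $F_0(\Gamma')$ and $F_1(\Gamma')$ can be written as $C^{k,\alpha}$-small normal graphs over a single fixed end $\bar{E}_{0,R_0}\subset F_0(\Gamma_0')$; I write $u_{\Gamma'}$ for the resulting exponential normal graph height function of $F_1(\Gamma')$ over $F_0(\Gamma')$ and $\hat{u}_{\Gamma'}:=r^{n+1}e^{r^2/4}u_{\Gamma'}$, now all functions on this one domain. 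Since $F_1(\Gamma')-F_0(\Gamma') = u_{\Gamma'}\,\nu_{F_0(\Gamma')}$ with $\nu_{F_0(\Gamma')}$ having uniformly bounded rescaled derivatives and depending continuously on $\Gamma'$, it suffices to establish the asserted continuity for the scalar $\hat{u}_{\Gamma'}$. By continuity of $F_0,F_1$, the quantities $\sup_{\bar{E}_{0,R_0}}|u_{\Gamma'}|$ and $\sup_{\bar{E}_{0,R_0}}r^{1+j}|\nabla^{j}A_{F_i(\Gamma')}|$, $j\in\{0,\dots,3\}$, $i=0,1$, are bounded uniformly for $\Gamma'\in W$.

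The second step is to extract uniformity of the constants. Theorem~\ref{theo-ptwise-est}(1) then provides a constant $C_0$, independent of $\Gamma'\in W$, with $\sup_{\bar{E}_{0,2R_0}}r^i|\nabla^i\hat{u}_{\Gamma'}|\leq C_0$ for $i=0,1,2$. Inserting these bounds into the equation \eqref{evo-equ-u-hat} satisfied by $\hat{u}_{\Gamma'}$, exactly as in the proof of Corollary~\ref{coro-def-trace-inf}, yields $|\langle x,\nabla\hat{u}_{\Gamma'}\rangle|\leq C_1 r^{-2}$ on $\bar{E}_{0,2R_0}$ with $C_1$ again independent of $\Gamma'\in W$, where $\nabla$ is the intrinsic gradient along $F_0(\Gamma')$. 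Integrating this radially gives the uniform quadratic convergence
\[
 \sup_{\theta\in\Gamma}\big|\hat{u}_{\Gamma'}(r\theta)-\tr_{\infty}^0(\hat{u}_{\Gamma'})(\theta)\big|\leq C_1 r^{-2},\qquad r\geq 2R_0,
\]
valid for every $\Gamma'\in W$.

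The final step is an $\varepsilon/3$ argument. Given $\varepsilon>0$, fix $R$ with $C_1 R^{-2}<\varepsilon/3$. On the compact slice $\{r=R\}$ the map $\Gamma'\mapsto u_{\Gamma'}|_{\{r=R\}}$ is continuous in $C^0$ — it is the restriction of $F_1(\Gamma')-F_0(\Gamma')$, which varies continuously in $C^0$ on compact sets, followed by the change-of-domain identifications, which converge to the identity as $\Gamma'\to\Gamma_0'$ — so there is a neighbourhood $V\subseteq W$ of $\Gamma_0'$ with $\sup_{\theta\in\Gamma}|\hat{u}_{\Gamma'}(R\theta)-\hat{u}_{\Gamma_0'}(R\theta)|<\varepsilon/3$ for all $\Gamma'\in V$. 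Applying the previous display at radius $R$ to both $\Gamma'$ and $\Gamma_0'$ and using the triangle inequality gives $\sup_{\theta\in\Gamma}|\tr_{\infty}^0(\hat{u}_{\Gamma'})(\theta)-\tr_{\infty}^0(\hat{u}_{\Gamma_0'})(\theta)|<\varepsilon$ for $\Gamma'\in V$. Since $\Gamma_0'$ was arbitrary, the stated map is continuous on $U_0\cap U_1$; together with \eqref{diff-fct-rel-ent-chart} this shows $\tilde{\mathcal E}\in C^1$ and completes the proof of Theorem~\ref{gen-uni-0-rel-ent}.

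The main obstacle is the uniformity in the second step: one must verify that the constants in Theorem~\ref{theo-ptwise-est}(1) and in the radial estimate behind Corollary~\ref{coro-def-trace-inf} depend only on $n$, on $\sup|u|$, and on the rescaled norms of the second fundamental forms of $F_0(\Gamma'),F_1(\Gamma')$ up to order three, and that these are locally bounded along the family. This is exactly where the $C^{k,\alpha}$-continuity of the charts with $k\geq5$ is needed, since it is what controls fifth derivatives of the graph functions, equivalently third derivatives of $A$. A secondary, purely bookkeeping point is the choice of the single reference end carrying all the $\hat{u}_{\Gamma'}$; this causes no difficulty because the relevant cones and expanders are $C^{k,\alpha}$-close to those at $\Gamma_0'$, so the change-of-domain maps are $C^{k,\alpha}$-close to the identity and affect none of the estimates.
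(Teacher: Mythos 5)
Your proof follows essentially the same route as the paper's: uniform quadratic radial convergence to the trace (derived from the $\Gamma'$-uniform constants in Theorem \ref{theo-ptwise-est} and the argument of Corollary \ref{coro-def-trace-inf}), continuity of the Bernstein--Wang charts on a fixed compact slice, and a triangle-inequality ($\varepsilon/2$ in the paper, $\varepsilon/3$ in yours) comparison. You spell out the uniformity of the constants and the common-domain bookkeeping more explicitly than the paper does, but the core idea is identical.
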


\begin{proof}[Proof of Claim \ref{claim-c0-trace}]
Given a positive $\varepsilon$ and a radius $R$, there exists a neighborhood $U_R\subset U_0\cap U_1$ of $\Gamma$ such that if $\Gamma'\in U_R$:
\begin{equation*}
\sup_{\partial B(0,R)}r^{n+1}e^{\frac{r^2}{4}}\left|(F_1(\Gamma')-F_0(\Gamma'))-(F_1(\Gamma)-F_0(\Gamma))\right|\leq \frac{\varepsilon}{2},
\end{equation*}
by the continuity of the charts $F_i$, $i=0,1$.

Now, according to the first part of Theorem \ref{theo-ptwise-est}, there is a positive radius $R_0=R_0(\Gamma)$ sufficiently large and a positive constant $C=C(\Gamma)$ such that:
\begin{equation*}
\left|r^{n+1}e^{\frac{r^2}{4}}\left(F_1(\Gamma')-F_0(\Gamma')\right)-\tr_{\infty}^0\left(r^{n+1}e^{\frac{r^2}{4}}\left(F_1(\Gamma')-F_0(\Gamma')\right)\right)\right|\leq \frac{C}{r^2},
\end{equation*}
if $r\geq R_0$ and  $\Gamma'\in U_1\cap U_2$. Indeed, it suffices to integrate (\ref{evo-equ-u-hat}) along the radial vector field $\frac{x}{2}$ by using the pointwise bounds (\ref{ptwise-est-u}) as in the proof of Corollary \ref{coro-def-trace-inf}.

Claim \ref{claim-c0-trace} thus follows from an $\varepsilon/2$ argument.
\end{proof}

 Assume that we have two locally entropy minimising expanders $\Sigma_0, \Sigma_1$ asymptotic to $C(\Gamma)$. We can thus construct out of $\Sigma_1\cap B_R(0)$ a competitor to $\Sigma_0$ by adding a ribbon inside $\mathbb{S}_R(0)$. Note that by \eqref{ptwise-est-u} the area of the ribbon is bounded by $C R^{-2}e^{-\frac{R^2}{4}}$. We thus obtain that 
$$ \mathcal{E}_{\Sigma_0,\Sigma_1}(R) \leq C R^{-2} $$
By interchanging the roles of $\Sigma_0$ and $\Sigma_1$ we similarly see that
$$  \mathcal{E}_{\Sigma_0,\Sigma_1}(R) \geq - C R^{-2}\ . $$
This implies that $\mathcal{E}_{\Sigma_0,\Sigma_1} =0$. We can thus apply the result above to show that generically entropy minimising expanders are unique.
\end{proof}
\newpage
\begin{appendix}
\section{Geometry of normal graphs}\label{app:NG}
Let again $\Sigma_0,\Sigma_1$ be two expanders asymptotic to the cone $C(\Gamma)$, and assume $ \bar{E}_{1,R_0} = \text{graph}_{\bar{E}_{0,R_0}}(u)$ with $u : \bar{E}_{0,R_0} \ra \R$.  Let $p \in \Sigma_0$ and choose a local parametrisation $F$, parametrising an open neighbourhood $U$ of $p$ in $\Sigma_0$ such that $F(0)=p$. We can assume that $ g_{ij}=\langle \partial_i F, \partial_j F \rangle$ satisfies
$$ g_{ij}\big|_{x=0} = \delta_{ij}  \text{ and } \partial_kg_{ij}|_{x=0} = 0\, .$$
For simplicity we can furthermore assume that the second fundamental form $(h_{ij})$ is diagonalised at $p$ with eigenvalues $\lambda_1,\ldots, \lambda_n$. A direct calculation, see \cite[(2.27)]{Wang14}, yields that the normal vector $\nu_1(q)$, where $q = p + u(p) \nu_0(q)$, is co-linear to the vector
$$ N = - \sum_{i=1}^n \frac{\partial_i u}{1 - \lambda_i u} \partial_iF\bigg|_{x=0} + \nu_0(p)\ . $$
Denoting the shape operator $S = (h^i_{\ j})$ we see that thus in coordinate free notation
\begin{equation}\label{eq:ng.1}
\nu_1(q) = v^{-1} \left( - (\text{Id} - uS)^{-1}\nabla_{\Sigma_0} u + \nu_0\right) (p)\, ,
\end{equation}
where $v:=(1 + |(\text{Id} - uS)^{-1}(\nabla_{\Sigma_0} u)|^2)^{\frac{1}{2}}$.
This implies
\begin{equation}\label{eq:ng.2}
\langle q, \nu_1(q) \rangle = v^{-1} \left( u + \langle p, \nu_0(p) \rangle - \big\langle p, (\text{Id} - uS)^{-1}\nabla_{\Sigma_0} u \big\rangle \right)\, .
\end{equation}
For the induced metric $\tilde g$ one obtains in the above coordinates at $p$, again see \cite[(2.32)]{Wang14},
\begin{equation}\label{eq:ng.2b}
 \tilde g_{ij} = (1-\lambda_iu)(1-\lambda_ju) \delta_{ij} + \partial_i u \partial_j u
 \end{equation}
which implies
\begin{equation}\label{eq:ng.3}
\tilde g^{ij} = \frac{\delta^{ij}}{(1-\lambda_i u)(1-\lambda_j u)} -  v^{-2}\frac{\partial_i u}{(1-\lambda_i u)^2}\frac{\partial_j u}{(1-\lambda_j u)^2}\, .
\end{equation}
Furthermore, from \cite[(2.30)]{Wang14} we have
\begin{equation}\label{eq:ng.4}
\begin{split}
\tilde h_{ij} &= \langle \partial^2_{ij} \tilde F,  \nu_N \rangle \\ &= v^{-1} \Big( \frac{\lambda_i}{1-\lambda_i u} \partial_i u \partial_ju + \frac{\lambda_j}{1-\lambda_j u} \partial_i u \partial_ju\\
&\qquad\quad\ + \sum_k \frac{u}{1-\lambda_k u}\, \partial_k u \,\partial_i h_{jk} + h_{ij} - \lambda_i\lambda_j u\, \delta_{ij} + \partial^2_{ij} u\Big)\, .
\end{split}
\end{equation}
Since $H_{\Sigma_1}(p) = \tilde g^{ij}(p) \tilde h_{ij}(p)$ we see that
\begin{equation}\label{eq:ng.5}
\begin{split}
H_{\Sigma_1}(p) &= \Delta_{\Sigma_0} u + u |A_{\Sigma_0}|^2 + H_{\Sigma_0}(q) + Q(x,u,\nabla^{\Sigma_0} u , \nabla_{\Sigma_0}^2 u)\, ,
\end{split}
\end{equation}
where $Q(x,u,\nabla^{\Sigma_0} u , \nabla_{\Sigma_0}^2 u)$ is quadratic in $u, \nabla^{\Sigma_0} u, \nabla_{\Sigma_0}^2u$. 

\section{Interpolation inequalities} \label{app:interpolate}
We recall the following standard interpolation inequalities in multiplicative form. 
\begin{lemma}\label{lemm:interpolation}
Suppose that $u \in C^{k}(B_{2})$, then for $j< k$,
\[
\Vert D^{j} u \Vert_{C^{0}(B_{1})} \leq C \Vert u\Vert_{C^{0}(B_{2})}^{1-\frac{j}{k}} \Vert D^{k} u \Vert_{C^{0}(B_{2})}^{\frac j k}
\]
for $C=C(n,k)$. Similarly, if $u \in C^{k,\alpha}(B_{2})$, then for $j+\beta < k+\alpha$,
\[
[ D^{j} u ]_{\beta;B_{1}} \leq C \Vert u\Vert_{C^{0}(B_{2})}^{1-\frac{j+\beta}{k+\alpha}} [ D^{k} u ]_{\alpha ; B_{2}}^{\frac{j+\beta}{k+\alpha}}
\]
for $C=C(n,k,\alpha,\beta)$.
\end{lemma}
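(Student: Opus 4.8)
The plan is to reduce everything to the one–dimensional, second–order case and then bootstrap in the order of differentiation. First I would prove the \emph{additive} (Peter--Paul) form of the estimate: for $f\in C^2$ of one real variable and a point $x$ with $[x,x+h]$ in the domain, Taylor's formula with remainder gives $f'(x)=h^{-1}(f(x+h)-f(x))-\tfrac12 h\,f''(\xi)$, hence $|f'(x)|\le 2h^{-1}\Vert f\Vert_{C^0}+\tfrac12 h\,\Vert f''\Vert_{C^0}$ for every admissible $h$. Applying this along each coordinate segment contained in $B_2$ and covering $B_1$ yields, for any pair of nested balls $B_1\subset B_\rho\subset B_2$ and every $0<\varepsilon\le\varepsilon_0$,
\[
\Vert Dg\Vert_{C^0(B_1)}\le \varepsilon\,\Vert D^2 g\Vert_{C^0(B_\rho)}+C\varepsilon^{-1}\Vert g\Vert_{C^0(B_\rho)} .
\]
Taking $g=D^{i-1}u$ and optimising over $\varepsilon$ (making $h$ as large as the room between the balls allows) gives the consecutive–order multiplicative bound $\Vert D^i u\Vert_{C^0(B_1)}\le C\,\Vert D^{i-1}u\Vert_{C^0(B_\rho)}^{1/2}\Vert D^{i+1}u\Vert_{C^0(B_\rho)}^{1/2}$.

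Next I would iterate this. Fix radii $r_i:=1+i/k$ and set $M_i:=\Vert D^i u\Vert_{C^0(B_{r_i})}$, so that $M_0\le\Vert u\Vert_{C^0(B_2)}$, $M_k\le\Vert D^k u\Vert_{C^0(B_2)}$ and $M_j\ge\Vert D^j u\Vert_{C^0(B_1)}$. The previous step gives $M_i\le C\,M_{i-1}^{1/2}M_{i+1}^{1/2}$ for $0<i<k$, i.e.\ $i\mapsto\log M_i+\log C$ is convex; a standard finite–difference argument then upgrades this to $M_j\le C\,M_0^{1-j/k}M_k^{j/k}$, which is the first inequality. The choice $r_i=1+i/k$ guarantees that at each application of the base estimate there is genuine room between the balls involved.

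For the Hölder version I would first interpolate a Hölder seminorm between $L^\infty$ and the Lipschitz seminorm: for $0<\beta\le1$,
\[
[h]_{\beta;B}\le C\,\Vert h\Vert_{C^0(B)}^{1-\beta}\,\Vert Dh\Vert_{C^0(B)}^{\beta},
\]
which follows by bounding $|h(x)-h(y)|$ by $\min\!\big(2\Vert h\Vert_{C^0},|x-y|\,\Vert Dh\Vert_{C^0}\big)$ and comparing with $|x-y|^\beta$. Applying this with $h=D^ju$ and controlling $\Vert D^ju\Vert_{C^0}$ and $\Vert D^{j+1}u\Vert_{C^0}$ by the first inequality reduces matters to the top–order factor, which is handled by the elementary $\Vert D^ku\Vert_{C^0(B')}\le C[D^ku]_{\alpha;B}+C\Vert D^{k-1}u\Vert_{C^0(B)}$ on a slightly smaller ball, followed once more by the first inequality applied to $\Vert D^{k-1}u\Vert_{C^0}$. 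Each factor produced in this chain is a power of $\Vert u\Vert_{C^0(B_2)}$ or of $[D^ku]_{\alpha;B_2}$, and the scaling $u(x)\rightsquigarrow u(\lambda x)$ forces the two exponents to be exactly $1-\tfrac{j+\beta}{k+\alpha}$ and $\tfrac{j+\beta}{k+\alpha}$.

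This is a textbook fact (see e.g.\ Gilbarg--Trudinger, \S6.8), so there is no real difficulty of principle. The only genuine work is bookkeeping: keeping the nested balls consistent through the iteration, and verifying that chaining the consecutive–order estimates collapses the exponents to exactly $j/k$ (respectively $(j+\beta)/(k+\alpha)$). The scaling heuristic pins the exponents down immediately, so the remaining labour is simply to turn it into a clean induction.
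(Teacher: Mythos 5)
Your route---one--dimensional Taylor to get an additive $\varepsilon$-estimate, optimisation over $\varepsilon$ for the consecutive-order multiplicative bound, log-convexity across orders, and the elementary Hölder-between-$L^\infty$-and-Lipschitz reduction---is exactly the route the paper gestures at by citing Gilbarg--Trudinger Lemma 6.32 and the ``optimize over $\mu$'' refinement, so the overall strategy matches what the authors intend.

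There is, however, a genuine gap at the sentence ``optimising over $\varepsilon$ (making $h$ as large as the room between the balls allows) gives the consecutive-order multiplicative bound.'' On a bounded domain the additive estimate $\Vert Dg\Vert_{C^0(B_1)}\le\varepsilon\Vert D^2 g\Vert_{C^0(B_\rho)}+C\varepsilon^{-1}\Vert g\Vert_{C^0(B_\rho)}$ holds only for $0<\varepsilon\le\varepsilon_0$, where $\varepsilon_0$ is bounded by the room between the nested balls; it does not hold for all $\varepsilon$. If the unconstrained minimiser $\varepsilon^*=(\Vert g\Vert/\Vert D^2 g\Vert)^{1/2}$ exceeds $\varepsilon_0$ (i.e.\ $\Vert D^2 g\Vert$ is small relative to $\Vert g\Vert$), one is forced to take $\varepsilon=\varepsilon_0$, and the multiplicative bound does not follow from the additive one. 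Indeed it cannot: for $u(x)=x_1$, $j=1$, $k=2$ the left-hand side of the lemma is $1$ while the right-hand side vanishes, and more generally any nonconstant polynomial of degree $<k$ is a counterexample; likewise $u(x)=x_1$ with $j=0$, $\beta\in(0,1)$, $k=1$ defeats the Hölder version since $[Du]_{\alpha}=0$. So the lemma as literally stated, with only the top seminorm $\Vert D^k u\Vert_{C^0}$ (resp.\ $[D^k u]_\alpha$) on the right, is false for general $u$, and the same defect is present in the citation to Gilbarg--Trudinger: their $\mu$-parametrised estimate is also constrained to $\mu$ small, and the interior weighted norms do not cure the low-frequency obstruction. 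A correct version either keeps an additive lower-order term, or uses the full $C^{k,\alpha}$-norm on the right (as Hörmander's Lemma~A.2 does), or retains the hypothesis that the optimal scale lies in the allowed range, i.e.\ $\Vert u\Vert_{C^0(B_2)}\lesssim\Vert D^k u\Vert_{C^0(B_2)}$ (resp.\ the Hölder analogue). In the paper's applications this is harmless because $u$ is exponentially small at the relevant scale while the top derivatives are only polynomially bounded, so $\varepsilon^*\le\varepsilon_0$ is automatic; but your write-up, like the paper's statement, should make this restriction explicit rather than asserting the unrestricted multiplicative estimate.
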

These follow in a similar manner to the linear inequalities given in \cite[Lemma 6.32]{giltru}, except in the proof one should optimize with respect to the parameter $\mu$ rather than just choosing $\mu$ sufficiently small. Alternatively, see \cite[Lemma A.2]{Hormander:geodesy}. 

\end{appendix}

\providecommand{\bysame}{\leavevmode\hbox to3em{\hrulefill}\thinspace}
\providecommand{\MR}{\relax\ifhmode\unskip\space\fi MR }
\providecommand{\MRhref}[2]{%
  \href{http://www.ams.org/mathscinet-getitem?mr=#1}{#2}
}
\providecommand{\href}[2]{#2}

\end{document}